\documentclass{amsart}


\usepackage{pinlabel}  
\usepackage{multicol} 
\usepackage{tikz}
\usepackage{color}
\usepackage[all]{xy}
\usepackage{hyperref}       

\usepackage{float}

\setlength{\parskip}{0.3\baselineskip}   


\newcommand{\ZZ}{\mathbb{Z}}

\newcommand{\ulg}{\text{u.l.g.}\ }


\swapnumbers
\theoremstyle{plain}
\newtheorem{thm}{Theorem}[section]
\newtheorem{lem}[thm]{Lemma}
\newtheorem{cor}[thm]{Corollary}
\newtheorem{prop}[thm]{Proposition}

\theoremstyle{definition}

\newtheorem{ex}[thm]{Example}

\newtheorem{rem}[thm]{Remark}

\newtheorem{dfn}[thm]{Definition}

\newcommand{\h}{\hspace{2mm}}  

\newcommand*\circled[1]{\tikz[baseline=(char.base)]{
            \node[shape=circle,draw,inner sep=2pt] (char) {#1};}}


\title[Uniquely labelled geodesics of Coxeter groups]{Uniquely labelled geodesics of Coxeter groups}

\author[E.~Fink]{Elisabeth Fink}
\address[Elisabeth Fink]{Department of Mathematics and Statistics, University of Ottawa, 585 King Edward Street, Ottawa, ON, K1N 6N5, Canada}
\email{efink@uottawa.ca}
\urladdr{http://mysite.science.uottawa.ca/efink/}

\author[K.~Zainoulline]{Kirill Zainoulline}
\address[Kirill Zainoulline]{Department of Mathematics and Statistics, University of Ottawa, 585 King Edward Street, Ottawa, ON, K1N 6N5, Canada}
\email{kirill@uottawa.ca}
\urladdr{http://mysite.science.uottawa.ca/kzaynull/}

\keywords{Coxeter group, unique geodesic, generating function, reduced word}
\subjclass[2010]{20F55, 20F65, 05E15}

\begin{document}

\maketitle

\begin{abstract}
Studying geodesics in Cayley graphs of groups has been a very active area of research over the last decades. We introduce the notion of a \emph{uniquely labelled geodesic}, abbreviated with u.l.g. These will be studied first in finite Coxeter groups of type $A_n$. Here we introduce a generating function, and hence are able to precisely describe how many u.l.g.'s we have of a certain length and with which label combination. These results generalize several results about unique geodesics in Coxeter groups. In the second part of the paper, we expand our investigation to infinite Coxeter groups described by simply laced trees. We show that any \ulg of finite branching index has finite length.   
We use the example of the group $\tilde{D}_6$ to show the existence of infinite u.l.g.'s in groups which do not have any infinite unique geodesics. We conclude by exhibiting a detailed description of the geometry of such u.l.g.'s and their relation to each other in the group $\tilde{D}_6$. 
\end{abstract}

\section{Introduction}

Coxeter groups are historically very important and occur naturally as reflection groups (see e.g.~\cite{Hu90}). Over the decades they have sparked immense interest from various sides of mathematics and physics. 

In particular, geodesics on their Cayley graphs, shortest connections between points which represent reduced words, have been of special interest \cite{St84}, \cite{BH93}, \cite{He94}, \cite{Ed95}, \cite{St97}, \cite{El97}, \cite{EE10}, \cite{LP13}, \cite{HNW}, \cite{Ha17}. 
In \cite{AC13} and \cite{CK16} the authors introduce a formal power series with coefficients the number of geodesics for right-angled and even Coxeter groups based on trees. The paper \cite{MT13} relates geodesics and quasi-geodesics for Coxeter groups.
Related to the formal power series of geodesic growth is the growth series of a group introduced in \cite{P90}. This series has been studied for different types of Coxeter groups in for example \cite{CD91}, \cite{M03}, \cite{A04} and been generalized in \cite{GN97}.  

In this paper we introduce the notion of a \emph{uniquely labelled geodesic}. Instead of limiting our investigation to the existence of geodesics and their uniqueness, we reach out to geodesics which are unique with respect to their total label combination seen along the path going to a fixed point in the graph. For example, in abelian groups these consist only of powers of the generators, as any word with more than one letter can be written in any other order of the generators and would still yield the same element. On the other hand, any unique geodesic is also the unique geodesic with that label combination reaching to the element it represents.

Geometrically speaking, it can be shown that a word in the generators of a group can only be a \ulg if it is a connected path on the Coxeter diagram which is a graph describing the group. Hence asking for a certain \ulg is the equivalent of the graph theoretical problem of finding a connected path in a graph with visiting each vertex a given number of times.

In the first part of the paper, we study the finite Coxeter groups of type $A$ with $n$ generators. We introduce a generating function, a power series in $n$ variables, where each monomial represents a certain label combination. We then give a precise formula for each coefficient depending on the monomial (Corollary~\ref{cor_coeff}). Based on this, we give exact formulas for the number of non-zero coefficients (Corollary~\ref{cor_nonzero_coeff}) as well as the total number of u.l.g.'s in these groups (Theorem~\ref{thm:numberA}), both dependent on $n$.

In the second part of the paper, we expand our study to infinite groups. 
We introduce the notion of a branching index (Definition~\ref{def:finbr}) which roughly speaking describes the turning behaviour of a connected path in the Coxeter-Dynkin diagram. We show that any \ulg with finite branching index has finite length (Theorem~\ref{thm:finitebr}).
We then study the affine group $\tilde{D}_6$ \cite[Appendix~A1, Table~2]{BB}. In this group, we exhibit an infinite periodic \ulg (Theorem~\ref{thm:ulg}).  We show that there are indeed two more different u.l.g.'s and relate these with each other geometrically (Theorem~\ref{thm:threegeodesics}).

\paragraph{\it Acknowledgements} Both authors were partially supported by the NSERC Discovery Grant RGPIN-2015-04469.

\section{Coxeter groups}
Let $W$ be a Coxeter group of rank $n$ that is given by generators and relations
\[W=\langle s_1,\ldots, s_n \mid s_i^2=1,\; (s_is_j)^{m_{ij}}=1\rangle,\] 
where $m_{ij}\ge 2$ are the Coxeter exponents. Consider its Cayley graph $C$ with respect to the chosen generators $s_1$, $\ldots$, $s_n$:
vertices of $C$ correspond to elements $w\in W$ and two vertices $w$ and $w'$ are connected by an edge and
labelled by $i$ iff 
\[
w'=ws_i\text{ and }l(w')=l(w)+1,
\] 
where $l\colon W \to \ZZ_{\ge 0}$ is the length function on $W$. Then the shortest path
$\rho(w)$ connecting $1$ and $w\in W$ corresponds to a reduced expression for $w=s_{i_1}\ldots s_{i_l}$ and 
its length coincides with the length $l$ of $w$; such a path will be called a {\em geodesic}.

We will use a slightly different labeling of the Cayley graph:
instead of an integer $i\in \{1,\ldots, n\}$ we put the standard vector $\{(0,\ldots,1,\ldots,0)\}$ where $1$
is at the $i$th position. 
By a (total) label of a geodesic $\rho(w)$ denoted by $\vec{\rho}(w)$ 
we call the sum of labels (considered as vectors in $\mathbb{Z}^n$) of all edges of $\rho$ or, equivalently,
it is an $n$-tuple $(i_1,\ldots,i_n)$, where $i_k$ is the number of generators $s_k$ used to express $w$. 

We say $\rho(w)$ is a {\em uniquely labelled geodesic} (u.l.g.) on the Cayley graph of $W$, 
if there is only one geodesic connecting $1$ and $w$ with label $\vec{\rho}(w)$. 
Observe that u.l.g.'s correspond to elements $w\in W$ that have a unique reduced expression for a given label.

We define the {\em generating function}
\[
U_C(t_1,\ldots,t_n)=\sum_{(i_1,\ldots,i_n)} a_{i_1,\ldots,i_n} t_1^{i_1}\ldots t_n^{i_n},
\]
where $a_{i_1,\ldots,i_n}$ is the number of u.l.g.'s with label  $(i_1,\ldots,i_n)$, i.e., 
\[a_{i_1,\ldots,i_n}=|\{w\in W\mid \rho(w)\text{ is a \ulg with }\vec{\rho}(w)=(i_1,\ldots,i_n)\}|.\]

\begin{ex}
The Cayley graph of the symmetric group 
\[S_4=\langle s_1, s_2, s_3 \h | \h s_i^2=1,\; (s_1s_2)^3=1,\; (s_2s_3)^3=1,\; [s_1,s_3]=1\rangle\] has the form 
\[\tiny
\xymatrix@=2em{
& & & 1 \ar[d]|-{2}\ar[dl]|-{1}\ar[dr]|-{3}  & & & \\
&&s_1 \ar[dl]|-{2} \ar[dr]|-(.75){3} & s_2\ar[dl]|-(.75){1}\ar[dr]|-(.75){3}  & s_3\ar[dl]|-(.75){1}\ar[dr]|-{2} && \\
&s_1s_2\ar[dl]|-{3} \ar[d]|-{1}& s_2s_1\ar[dl]|-{2} \ar[ddr]|-{3}&  s_1s_3\ar[d]|-{2}  & s_2s_3\ar[ddl]|-{1} \ar[dr]|-{2} & s_3s_2 \ar[d]|-{3}\ar[dr]|-{1} & \\ 
s_1s_2s_3\ar[ddr]|-{1}\ar[ddrr]|-(.75){2} & s_1s_2s_1\ar[dd]|-(.75){3} &  &  s_1s_3s_2\ar[ddl]|-{3}\ar[ddr]|-{1}  & & s_2s_3s_2 \ar[dd]|-(.75){1}& s_3s_2s_1\ar[ddl]|-{3}\ar[ddll]|-(.75){2} \\ 
& & &  s_2s_1s_3\ar[d]|-{2}  & & & \\ 
& s_1s_2s_3s_1\ar[rd]|-{2} & s_1s_2s_3s_2\ar[rd]|-(0.75){1} &  s_2s_1s_3s_2\ar[dr]|-(0.75){1}\ar[dl]|-(0.75){3}  & s_3s_2s_1s_2\ar[dl]|-(0.75){3} & s_3s_2s_1s_3\ar[dl]|-{2} & \\
&& s_1s_2s_3s_1s_2\ar[rd]|-{1} & s_1s_3s_2s_3s_1\ar[d]|-{2} & s_3s_2s_1s_3s_2 \ar[dl]|-{3} && \\
&&& s_1s_2s_3s_1s_2s_1 &&&
}
\]
which gives a polynomial generating function
\begin{align*}
U_C(t_1,t_2,t_3)&= 1+(t_1+t_2+t_3)+ (2t_1t_2+2t_2t_3)\\
&+(t_1^2t_2+t_1t_2^2+2t_1t_2t_3+t_2^2t_3+t_2t_3^2)\\
 &+2t_1t_2^2t_3+(t_1^2t_2^2t_3+t_1t_2^2t_3^2+2t_1t_2^3t_3).
\end{align*}
\end{ex}

\begin{ex} Suppose $\Gamma$ is a finite graph where each two vertices are connected by at most one edge.
Let $W$ be a \emph{right-angled Coxeter group} associated to $\Gamma$, i.e.
adjacent vertices $\alpha$ and $\beta$ of $\Gamma$ correspond to generators $s_\alpha$ and $s_\beta$ in $W$ with $(s_\alpha s_\beta)^\infty=1$ (there are no relations between $s_\alpha$ and $s_\beta$), nonadjacent vertices correspond to commuting generators and $s_\alpha^2=1$ for all $\alpha$.

Since there are no relations between adjacent generators,
u.l.g.'s in $W$ are in 1-1 correspondence to (connected) paths on $\Gamma$. 
Hence, if we index vertices of $\Gamma$ (generators of $W$) as $\alpha_1,\ldots,\alpha_n$, then
the coefficient $a_{i_1,\ldots,i_n}$ of the
generating function $U_C$ counts 
\begin{quote}
the number of connected paths in $\Gamma$ that pass through the vertex 
$\alpha_j$ exactly $i_j$ times.
\end{quote}
\end{ex}

\begin{ex}\label{ex:aff} Suppose $W$ is an affine Weyl group $\tilde A_2$, i.e. 
\[W=\{s_1,s_2,s_3\mid s_i^2=1,\; (s_is_j)^3=1,\; i\neq j\}.
\]
Its Coxeter-Dynkin diagram $\Gamma$ is a triangle
\[
\xymatrix{
 & \alpha_3 \ar@{-}[dr] &  \\
 \alpha_1 \ar@{-}[ur] \ar@{-}[rr] & & \alpha_2
}
\]
where edges correspond to the braid relations $(s_is_j)^3=1$ (there are no commuting generators).
Then the infinite word $(s_1s_2s_3)^\infty$ has the property that any finite connected subword
is a unique geodesic, in particular, it is a u.l.g. So the generating function $U_C$ is a formal power series (not a polynomial) in $\ZZ[[t_1,t_2,t_3]]$. 
This has been extensively studied in \cite{LP13} and is closely related to the famous problem
of constructing infinite reduced words.

In the present paper we construct infinite u.l.g.'s (hence, infinite reduce words) for some Coxeter groups whose Coxeter-Dynkin diagrams are simply-laced tree.
\end{ex}

\section{Coxeter groups of type $A$}
Consider the case of a  finite Coxeter group $W$ of type $A$ of rank $n$, i.e.,
$m_{ij}=2$ if $|i-j|>1$ and $m_{ij}=3$ if $|i-j|=1$ or, equivalently, the Coxeter-Dynkin diagram of $W$ is a chain:
\[
\xymatrix@=1em{\circ_1 \ar@{-}[r] & \circ_2 \ar@{-}[r] & \circ_3 \ar@{-}[r] & \ldots \ar@{-}[r]& \circ_{n-1}  \ar@{-}[r] & \circ_n}
\]

We have the following observations that hold for any group of type $A$

\begin{lem}\label{lem:shape} All non-zero monomials of $U_C$ are of the form 
\[t_l^{i_l}t_{l+1}^{i_{l+1}} \ldots t_m^{i_m},\quad 1\le l\le m \le n,\] where 
all the exponents $i_l,i_{l+1},\ldots,i_m$ are non-zero. 
\end{lem}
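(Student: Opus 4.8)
The plan is to prove the contrapositive: if the support of a label $(i_1,\ldots,i_n)$ --- meaning the set of indices $k$ with $i_k\neq 0$ --- is not a contiguous interval, then the coefficient $a_{i_1,\ldots,i_n}$ vanishes, because no $w\in W$ with that label can carry a uniquely labelled geodesic. Since a u.l.g.\ corresponds to an element $w$ possessing a \emph{unique} reduced expression with the prescribed label, it suffices to exhibit two distinct reduced expressions of the same label for every candidate $w$.

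First I would unpack what "not an interval" means combinatorially: there exist indices $p<b<q$ with $i_p\neq 0$, $i_q\neq 0$, but $i_b=0$. Fix such a gap $b$. Any reduced expression for a $w$ with this label never uses $s_b$, so each generator that does occur has index strictly below or strictly above $b$; let $L$ and $R$ denote these two sets of used generators. Both are nonempty, since $s_p\in L$ and $s_q\in R$.

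The key geometric step is that $L$ and $R$ commute elementwise. If $s_i\in L$ and $s_j\in R$ then $i\le b-1$ and $j\ge b+1$, hence $|i-j|=j-i\ge 2>1$, so the type $A$ relations give $m_{ij}=2$, i.e.\ $s_is_j=s_js_i$. Diagrammatically, deleting the vertex $b$ disconnects the chain and the used generators fall into two distinct components --- precisely the failure of the "connected path on the diagram" criterion mentioned in the introduction.

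Finally I would use this commutation to manufacture a second reduced expression with the same total label. Take any reduced expression for $w$; it contains at least one letter from $L$ and at least one from $R$, so its sequence of types $L/R$ is non-constant and therefore changes somewhere, producing two adjacent letters of opposite type. These two letters commute (one lies in $L$, the other in $R$), so swapping them is a commutation move: it preserves length and the represented element, hence yields another reduced expression for $w$ with the same label. This new expression is a \emph{different} word, because the two swapped generators are distinct ($L\cap R=\varnothing$ as the index ranges are disjoint). Thus $w$ admits at least two geodesics with label $(i_1,\ldots,i_n)$, so $\rho(w)$ is not a u.l.g.; as this holds for every such $w$, the coefficient is zero. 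I do not expect a genuine obstacle here --- the only point to state carefully is that the swap produces a distinct word rather than merely a distinct factorisation, which is exactly what $L\cap R=\varnothing$ guarantees.
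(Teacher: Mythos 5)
Your proposal is correct and follows essentially the same argument as the paper: locate a missing generator $s_b$ splitting the used generators into two nonempty commuting families, find an adjacent pair of letters from opposite sides of the gap, and swap them to produce a second reduced expression with the same label. The paper states the existence of such an adjacent pair as a "without loss of generality" step, whereas you spell it out via the non-constant $L/R$ type sequence; this is only a difference in level of detail, not in method.
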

\begin{proof}
Assume that a geodesic (reduced word) $\rho(w)$ contains no generator  $s_k$, where $2\le k\le n-1$, but it contains generators from both subsets
$S= \{s_{1}, \dots, s_{k-1}\}$ and $T=\{s_{k+1},\ldots,s_n\}$. Then without loss of generality, $\rho(w)$ must contain
a subword $xy$ where $x \in S, y \in T$, i.e., $w = u \cdot xy \cdot v$. Since $xy=yx$,
$w$ can also be written as $w = u \cdot yx \cdot v$. Both ways of writing $w$ result in two different geodesics with the same labels. Hence, $\rho(w)$ can not be a u.l.g.
\end{proof}

From the proof it follows 

\begin{cor}\label{cor:chaincond} In a \ulg $\rho(w)$ with $l(w)\ge 2$, 
any two adjacent generators $w=\ldots s_is_j\ldots $ must satisfy the condition $|i-j|=1$.
\end{cor}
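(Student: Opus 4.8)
The plan is to read this off directly from the commutation argument already used for Lemma~\ref{lem:shape}, specialised to a single consecutive pair of letters. I would write a reduced expression $w=s_{i_1}\cdots s_{i_l}$ realising the geodesic $\rho(w)$, fix any index $k$ so that $s_{i_k}s_{i_{k+1}}$ are the two adjacent generators in question, and show $|i_k-i_{k+1}|=1$ by excluding the two remaining alternatives $i_k=i_{k+1}$ and $|i_k-i_{k+1}|>1$.

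First I would dispose of the equal case: if $i_k=i_{k+1}$ then $s_{i_k}s_{i_{k+1}}=s_{i_k}^2=1$, so the expression is not reduced, contradicting that $\rho(w)$ is a geodesic. Hence $i_k\neq i_{k+1}$ and $|i_k-i_{k+1}|\ge 1$. The heart of the argument is the distant case and reuses the trick from Lemma~\ref{lem:shape}. If $|i_k-i_{k+1}|>1$, then in type $A$ we have $m_{i_k i_{k+1}}=2$, i.e. $s_{i_k}$ and $s_{i_{k+1}}$ commute. Writing $w=u\cdot s_{i_k}s_{i_{k+1}}\cdot v$ with $u=s_{i_1}\cdots s_{i_{k-1}}$ and $v=s_{i_{k+2}}\cdots s_{i_l}$, the commutation relation gives the alternative expression $w=u\cdot s_{i_{k+1}}s_{i_k}\cdot v$. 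Both have length $l$, hence both are reduced and correspond to geodesics from $1$ to $w$; and since permuting two letters does not change the multiset of generators used, both carry the same total label $\vec{\rho}(w)$. This contradicts the assumption that $\rho(w)$ is a \ulg, so $|i_k-i_{k+1}|=1$.

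I do not anticipate a genuine obstacle, as the statement is essentially a restatement of the proof of Lemma~\ref{lem:shape}. The only point meriting a word of care is to confirm that the two reduced words really determine \emph{distinct} paths in $C$, so that uniqueness of the label is actually violated: at step $k$ the intermediate vertices are $u\,s_{i_k}$ and $u\,s_{i_{k+1}}$, which differ precisely because $i_k\neq i_{k+1}$. With this remark the two geodesics are genuinely different, and the contradiction is complete.
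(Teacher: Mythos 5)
Your proof is correct and takes essentially the same route as the paper, which derives Corollary~\ref{cor:chaincond} directly from the proof of Lemma~\ref{lem:shape}: the same commutation swap $u\cdot s_is_j\cdot v \to u\cdot s_js_i\cdot v$ producing two distinct geodesics with the same label. Your additional points (excluding $i=j$ via reducedness and checking the two paths differ at the intermediate vertex) are minor elaborations of that same argument.
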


\begin{lem}\label{lem:except}
Suppose $\rho(w)$ is a u.l.g. Then it can not contain subwords of the form
\begin{align*}
& s_ms_{m-1}\ldots s_{l+1}s_ls_{l+1} \ldots s_{m-1}s_ms_{m-1}, \\
& s_{m-1}s_ms_{m-1}\ldots s_{l+1}s_ls_{l+1} \ldots s_{m-1}s_m, \\
& s_ls_{l+1}\ldots s_{m-1}s_ms_{m-1}\ldots s_{l+1}s_ls_{l+1}, \\
& s_{l+1}s_ls_{l+1}\ldots s_{m-1}s_ms_{m-1}\ldots s_{l+1}s_l, \text{ where }l+2\le m.
\end{align*}
\end{lem}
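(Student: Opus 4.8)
The plan is to show that each of the four subwords admits a \emph{second} reduced expression using exactly the same multiset of generators, and then to transplant this rewriting into $\rho(w)$. Concretely, since $\rho(w)$ is a geodesic, every contiguous factor of it is itself reduced; so if $\rho(w)=p\,u\,q$, where $u$ is one of the forbidden subwords and $u=u'$ in $W$ for some different word $u'$ of the same length using the same letters, then $p\,u'\,q$ is again a word of length $l(w)$ representing $w$. Hence $p\,u'\,q$ is a second geodesic from $1$ to $w$ with the same total label $\vec\rho(w)$ but a different edge path, so $\rho(w)$ is not a \ulg. Everything therefore reduces to producing such a $u'$ for each form.

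By Corollary~\ref{cor:chaincond} the letters occurring in each form lie in the parabolic subgroup $\langle s_l,\dots,s_m\rangle$, which I realise as the symmetric group on $\{l,l+1,\dots,m+1\}$ with $s_i=(i,i+1)$. The key computation is the palindrome identity
\[
s_m s_{m-1}\cdots s_{l+1}s_l s_{l+1}\cdots s_{m-1}s_m=(l,m+1)=s_l s_{l+1}\cdots s_{m-1}s_m s_{m-1}\cdots s_{l+1}s_l,
\]
which I would prove by induction on $m-l$: the inductive step conjugates the transposition $(l,m)$ by $s_m=(m,m+1)$ to obtain $(l,m+1)$. Writing $P$ for this common palindromic factor, a direct inspection of the four displayed words shows that the first form is $P\,s_{m-1}$, the second is $s_{m-1}\,P$, the third is $P\,s_{l+1}$, and the fourth is $s_{l+1}\,P$.

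Now the hypothesis $l+2\le m$ enters decisively. It guarantees that the transposition $P=(l,m+1)$ is \emph{disjoint} from $(m-1,m)=s_{m-1}$ and from $(l+1,l+2)=s_{l+1}$, so $P$ commutes with both $s_{m-1}$ and $s_{l+1}$. Consequently $P\,s_{m-1}=s_{m-1}\,P$ and $P\,s_{l+1}=s_{l+1}\,P$ in $W$, i.e.\ the first and second forms are distinct words representing the same element with the same label, and likewise for the third and fourth. Taking $u'$ to be the partner form and carrying out the replacement inside $\rho(w)$ produces the second geodesic described in the first paragraph, contradicting unique labelling.

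The only genuine work is the palindrome identity and the disjointness check, both elementary; the disjointness is exactly where $l+2\le m$ is used, and for $m=l+1$ the two transpositions overlap and the commutation fails, which is why the hypothesis is imposed. The step most in need of care is the bookkeeping that a contiguous replacement of a reduced factor by an equal-length, equal-label factor again yields a reduced word, but this is immediate since the resulting word still has length $l(w)$.
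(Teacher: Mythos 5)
Your proof is correct, and its overall skeleton --- produce a second word of the same length and the same label representing the same group element, substitute it for the forbidden factor inside $\rho(w)$, and contradict unique labelling --- is exactly the skeleton of the paper's proof; indeed both arguments hinge on the same identity, namely that the first and second forms (respectively the third and fourth) are equal in $W$. Where you genuinely differ is in how that identity is verified. The paper stays inside the Coxeter presentation: it converts the first form into the second by one braid move at the right end, a string of commutations carrying the newly created $s_m$ leftward across the low-index letters, and one braid move at the left end, and then dispatches the remaining forms by symmetry. You instead pass to the permutation model of the parabolic $\langle s_l,\dots,s_m\rangle\cong S_{m-l+2}$, identify both palindromes with the transposition $(l,m+1)$, and deduce the identity from the disjointness of $(l,m+1)$ with $s_{m-1}=(m-1,m)$ and $s_{l+1}=(l+1,l+2)$. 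Your version is more transparent about the role of the hypothesis $l+2\le m$ (it is precisely the disjointness condition, and you correctly note it fails for $m=l+1$), while the paper's relation-chasing has the advantage of being presentation-intrinsic: it is the argument that transfers essentially verbatim to the simply-laced tree setting of Lemma~\ref{lem:uglw}, where no symmetric-group realization is available. Both are complete proofs; your explicit bookkeeping that replacing a factor of a geodesic by an equal-length, equal-label word again yields a geodesic, and that distinct words trace distinct paths, is exactly the step the paper leaves implicit.
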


\begin{proof}
It is enough to prove it for the first word only (other words follow by symmetry).
Suppose $\rho(w)\in W$ contains such a subword. Then applying relations in the Coxeter group we obtain
\begin{align*}
&s_m s_{m-1} \ldots s_{l+1} s_l s_{l+1} \ldots s_{m-1} s_m s_{m-1}= \\
&s_m s_{m-1} \ldots s_{l+1} s_l s_{l+1} \ldots s_{m} s_{m-1} s_{m}=\\
&s_m s_{m-1} s_{m} \ldots s_{l+1} s_l s_{l+1} \ldots s_{m-1} s_{m}= \\
&s_{m-1} s_{m} s_{m-1} \ldots s_{l+1} s_l s_{l+1} \ldots s_{m-1} s_{m}.
\end{align*}
Since the first and the last subwords are different but have the same number of occurrences of each generator, i.e., the same label,
$\rho(w)$ can not be a u.l.g.
\end{proof}

Any word $\rho(w)$ that does not contain subwords of the lemma, must have one of the following forms (up to inversing the indices of generators $s_k\mapsto s_{n+1-k}$):

\paragraph{I} Suppose $n\ge 1$ and $l\le m$. It decreases from $m$ to $l$, that is $\rho(w)=s_ms_{m-1}\ldots s_l$.

\paragraph{II} Suppose $n\ge 2$ and $l< i,j$. First, it decreases from $i$ till $l$ and then it increases till $j$. \[\tiny
\xymatrix@=2em{
s_i \ar[dr] & & s_j \\
& s_l \ar[ur] &
}
\]

\paragraph{III} Suppose $n\ge 3$ and $l<i,j<m$.
First, it decreases from $i$ till the absolute minimum index $l$, 
then it increases till the absolute maximum index $m$  and, finally, decreases again till some index $j$.
This can be depicted as follows:
\[\tiny
\xymatrix@=1em{
 & & & s_m \ar[rdd] & \\
{s_i}\ar[rdd] & & & & \\
 & & & & s_j\\
   & {s_l} \ar[rruuu]& & &  
}
\]

\begin{cor}\label{cor:maxchain}
For $n\ge 3$ the maximal length of a \ulg is $3n-4$.
\end{cor}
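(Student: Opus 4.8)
The plan is to read the bound off the classification just established. By Lemma~\ref{lem:except} a u.l.g.\ contains none of the four listed subwords, so, up to the index reversal $s_k\mapsto s_{n+1-k}$ (which preserves length), its reduced word has one of the three shapes I, II, III. Since for a reduced word the number of letters equals $l(w)$, it suffices to maximise the letter count over each shape and take the largest value; note that shape III only exists when $n\ge 3$, which is exactly the hypothesis.

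For shape I the word $s_ms_{m-1}\cdots s_l$ has length $m-l+1\le n$. For shape II the word $s_i\cdots s_l s_{l+1}\cdots s_j$ has length
\[(i-l+1)+(j-l)=i+j-2l+1\le 2n-1,\]
using $l\ge 1$ and $i,j\le n$. For shape III the word $s_i\cdots s_l\cdots s_m\cdots s_j$ has length
\[(i-l+1)+(m-l)+(m-j)=i+2m-2l-j+1.\]
The crucial point is to pin down the exact range of the four parameters. The shape itself gives $1\le l$, $m\le n$ and $l\le i,j\le m$, but I would sharpen the two outer bounds by invoking Lemma~\ref{lem:except} once more: if $i=m$ then the initial down--up--down portion $s_ms_{m-1}\cdots s_l\cdots s_{m-1}s_ms_{m-1}$ is exactly the first forbidden word, and if $j=l$ then the suffix $s_{l+1}s_l s_{l+1}\cdots s_{m-1}s_ms_{m-1}\cdots s_{l+1}s_l$ is exactly the fourth forbidden word. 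Hence $l+1\le i\le m-1$ and $l+1\le j\le m-1$. This is precisely what makes the bound tight: without the constraint $j\ge l+1$ one would only obtain $3n-3$.

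Maximising $i+2m-2l-j+1$ over this range is then immediate: take $m=n$, $l=1$, $i=n-1$, $j=2$, giving length $3n-4$. Comparing the three cases, for $n\ge 3$ we have $3n-4\ge 2n-1\ge n$, so shape III dominates and no u.l.g.\ can be longer than $3n-4$.

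It remains to show the bound is attained, i.e.\ to produce an actual u.l.g.\ of length $3n-4$. The natural candidate is
\[w_0=s_{n-1}s_{n-2}\cdots s_1\, s_2 s_3\cdots s_n\, s_{n-1}s_{n-2}\cdots s_2,\]
the shape III word with the extremal parameters, whose label is $(1,3,\ldots,3,1)$. One checks routinely that $w_0$ is reduced and contains none of the forbidden subwords. The main obstacle is this very last point: verifying that $w_0$ is \emph{uniquely} labelled, since Lemma~\ref{lem:except} and Corollary~\ref{cor:chaincond} supply only necessary conditions. Here Corollary~\ref{cor:chaincond} is the useful lever, because every pair of neighbouring letters of $w_0$ differs by $1$, so no commutation relation $s_is_j=s_js_i$ applies directly and any competing reduced expression with the same label must be produced through braid moves $s_is_{i\pm1}s_i=s_{i\pm1}s_is_{i\pm1}$, each of which alters the label. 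I expect to finish by the braid-move bookkeeping of Lemma~\ref{lem:except}, tracking that no composition of such moves can restore the label $(1,3,\ldots,3,1)$ while producing a word different from $w_0$, so that $w_0$ is indeed the unique geodesic with its label.
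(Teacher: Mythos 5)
Your proposal is correct and follows essentially the same route as the paper: classify u.l.g.'s via Lemma~\ref{lem:except} into the shapes I--III, maximize the length within each shape, and exhibit the extremal type~III word $s_{n-1}\cdots s_1s_2\cdots s_ns_{n-1}\cdots s_2$ with label $(1,3,\ldots,3,1)$ --- your explicit maximization (including the observation that the constraint $j\ge l+1$ is what yields $3n-4$ rather than $3n-3$) is in fact carried out more carefully than the paper's one-line identification of the extremal word. The step you flag as unfinished, namely verifying that this word is genuinely a u.l.g., is exactly the step the paper also leaves unproved: its proof of the corollary simply asserts it, and the sufficiency of shapes I--III is likewise only asserted in the theorem that follows, so nothing is missing from your argument relative to the paper's own standard of rigor.
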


\begin{proof} 
The longest such word is of the form~III ($i=n-1$, $l=1$, $m=n$, $j=2$)
\[
s_{n-1}s_{n-2}\ldots s_2s_1s_2 \ldots s_{n-1}s_ns_{n-1} \ldots s_2,
\]
its length is $3n-4$ and it is a \ulg with label $(1,3,3,\ldots,3,3,1)$.
\end{proof}

The following theorem describes monomials of $U_C$

\begin{thm}
A non-zero monomial of $U_C$ has to be of the following type

\begin{itemize}
\item[I.] $t_lt_{l+1}\ldots t_m\quad$ for $l\le m$,
\item[II.] $t_l (t_{l+1}\ldots t_i)^2 t_{i+1}\ldots t_m$ (and the inverse by $t_k\mapsto t_{n+1-k}$) for $l< i\le m$,
\item[III.](a)  $\; t_l (t_{l+1}  \ldots t_i)^2 (t_{i+1} \ldots t_{j-1}) (t_j  \ldots t_{m-1})^2 t_m\quad$ for  $l<i<j<m$,
\item[](b) $\; t_l (t_{l+1}\ldots t_{j-1})^2 (t_j\ldots t_i)^3 (t_{i+1}\ldots t_{m-1})^2 t_m\quad$ for  $l<j\le i<m$.
\end{itemize}
\end{thm}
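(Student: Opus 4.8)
The plan is to take the \emph{shape} classification already established immediately above — that every \ulg of positive length is, up to the index reversal $s_k\mapsto s_{n+1-k}$, one of the three monotone-piece words I, II, III — and simply read off its total label. Since the label records only how many times each generator occurs, and since the index reversal corresponds on the level of monomials to the substitution $t_k\mapsto t_{n+1-k}$, it suffices to compute, for each of the three shapes, the number of times each index $k$ is traversed and then to check that the resulting exponent vector matches one of the patterns I, II, III(a), III(b). The reversal then accounts for the mirror monomials: it is recorded explicitly for type II in the statement, and is automatic for types I and III because those families are palindrome-closed. Note that only the forward implication is needed (a realized monomial must have one of these forms), so no relations beyond those used to derive the classification in Lemma~\ref{lem:except} and Corollary~\ref{cor:chaincond} will enter.

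First I would dispose of forms I and II. For form I the word $s_ms_{m-1}\ldots s_l$ visits each index in $\{l,\ldots,m\}$ exactly once, giving $t_lt_{l+1}\ldots t_m$, which is type~I. For form II, viewed as a descent followed by an ascent with global minimum $l$ and arms reaching $i$ and $j$, I would set $m=\max(i,j)$ and observe that $l$ occurs once, each index from $l+1$ up to $i':=\min(i,j)$ occurs twice (it lies on both arms), and each index from $i'+1$ to $m$ occurs once. This is exactly $t_l(t_{l+1}\ldots t_{i'})^2 t_{i'+1}\ldots t_m$, i.e.\ type~II. The reflected (peak) shape produces the $t_k\mapsto t_{n+1-k}$ image of this monomial, the inverse recorded in the statement.

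The substance is form III, the down--up--down word $w=s_is_{i-1}\ldots s_ls_{l+1}\ldots s_ms_{m-1}\ldots s_j$ with $l<i,j<m$. Here I would decompose $w$ into its three monotone segments and record the index ranges they cover: the first descent covers $\{l,\ldots,i\}$, the ascent covers $\{l+1,\ldots,m\}$, and the second descent covers $\{j,\ldots,m-1\}$. The exponent of $t_k$ is the number of these three ranges containing $k$, so the whole computation reduces to an interval-overlap count on the line, to be split according to the position of $j$ relative to $i$. If $i<j$ the three ranges overlap pairwise but never triply, yielding $1$ at $l$, then $2$'s on $[l+1,i]$, then $1$'s on $[i+1,j-1]$, then $2$'s on $[j,m-1]$, then $1$ at $m$, which is type~III(a). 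If $j\le i$ the middle block $[j,i]$ lies in all three ranges, producing $1$ at $l$, then $2$'s on $[l+1,j-1]$, then $3$'s on $[j,i]$, then $2$'s on $[i+1,m-1]$, then $1$ at $m$, which is type~III(b). Since $i<j$ and $j\le i$ exhaust all of $l<i,j<m$, collecting the four outcomes completes the argument.

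I expect the only real obstacle to be precisely this bookkeeping in form III: one must track the boundary indices $l$ and $m$ and the two transition indices $i,j$ carefully, allow for the degenerate empty blocks (e.g.\ $[i+1,j-1]=\varnothing$ when $j=i+1$, or $[l+1,j-1]=\varnothing$ when $j=l+1$), and confirm that the two cases land exactly on the exponent patterns $1,2,\ldots,2,1,\ldots,1,2,\ldots,2,1$ and $1,2,\ldots,2,3,\ldots,3,2,\ldots,2,1$. Everything else is a direct traversal count requiring no further group-theoretic input.
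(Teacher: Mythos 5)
Your proposal is correct and takes essentially the same route as the paper: the paper's one-sentence proof likewise rests on the shape classification I/II/III obtained from Corollary~\ref{cor:chaincond} and Lemma~\ref{lem:except} and then reads off the labels, your interval-overlap count (with the split $i<j$ versus $j\le i$ yielding types III(a) and III(b), and the reversal $t_k\mapsto t_{n+1-k}$ handling the mirrored shapes) being exactly the bookkeeping the paper leaves implicit.
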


Observe that Type III(a) for $|i-j|=1$ overlaps with Type II for $m=i+1$.

\begin{proof} All words of forms I, II and III are u.l.g.'s that have labels and, hence, monomials of the respective types I, II and III.
\end{proof}

\begin{cor} \label{cor_coeff} We have the following formula for the generating function
\[U_C(t_1,\ldots,t_n) = \sum_{\imath=(i_1,\ldots,i_n)} a_{\imath}\, t_1^{i_1}\ldots t_n^{i_n},
\]
where the coefficients $a_{\imath}$ depend on the type of the label (monomial) $\imath=(i_1,\ldots,i_n)$ as follows
\[
a_\imath=
\begin{cases}
1 & \text{Type I with }l=m \text{ or Type II with $m=i$} \\
2 & \text{Type I with }l<m\text{ or Type II with }m>i+1\text{ or Type III(b)}, \\
4 & \text{Type III(a) with }|i-j|>1,\\
2(m-l) & \text{Type III(a) with }|i-j|=1\text{ or Type II with }m=i+1.
\end{cases}
\]
\end{cor}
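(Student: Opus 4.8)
The plan is to read every coefficient off the classification established above. By Lemma~\ref{lem:except} and the description of the forms~I, II, III, each u.l.g.\ is---up to the diagram automorphism $\sigma\colon s_k\mapsto s_{n+1-k}$---a reduced word of one of these three forms, and conversely every such word is a u.l.g.\ whose label has the corresponding type. Two symmetries of the problem control the count. Reversing a reduced word $s_{i_1}\cdots s_{i_l}\mapsto s_{i_l}\cdots s_{i_1}$ sends a geodesic for $w$ to one for $w^{-1}$, leaves the label unchanged, and preserves the u.l.g.\ property, its fixed points being the palindromes; the automorphism $\sigma$ also preserves the u.l.g.\ property but sends the label $\imath=(i_1,\dots,i_n)$ to $\bar\imath=(i_n,\dots,i_1)$. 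Since distinct reduced words name distinct elements $w$, the coefficient $a_\imath$ equals the number of distinct form-words, together with their reverses and those $\sigma$-images whose label is again $\imath$, that carry the exponent pattern $\imath$. I would therefore, for each type, recover the turning parameters $l,i,j,m$ from the exponent pattern and then account for the coincidences forced by reversal and $\sigma$.

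The rigid types are quickly disposed of. A Type~I pattern forces a monotone word: for $l=m$ the single palindrome $s_l$ (giving $1$), and for $l<m$ the decreasing word $s_m\cdots s_l$ together with its reverse $s_l\cdots s_m$, which differ because $l\neq m$ (giving $2$). A Type~II pattern has a single maximal block of $2$'s, hence a single valley at $l$; when the top run $[i+1,m]$ has length $\geq 2$ this excludes any peak-type or two-hump word, so only the two valley-type words with arm-tops $\{i,m\}$ occur, and they are mutually reverse (giving $2$ for $m>i+1$), while for $m=i$ the word collapses to the symmetric palindrome $s_m\cdots s_l\cdots s_m$ (giving $1$). A Type~III(b) pattern contains a plateau of $3$'s which pins down all of $l,i,j,m$, so the down-up-down word is unique and, with its reverse, gives $2$. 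In each instance the content is a rigidity statement: the exponent vector determines the turning parameters, and then one only has to decide whether the word is a palindrome.

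For a Type~III(a) pattern with $|i-j|>1$ the two blocks of $2$'s are separated by a genuine run of $1$'s, and this dip rigidly fixes the turning indices of a down-up-down word. Counting this word together with its reverse and the two words obtained from the $\vee\leftrightarrow\wedge$ exchange given by $\sigma$ produces the four realizations; the delicate points are to confirm that these four are pairwise distinct and that no word of a different form can carry two separated humps.

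The genuine obstacle is the overlap case, Type~II with $m=i+1$ (equivalently Type~III(a) with $|i-j|=1$), where the separating run of $1$'s has vanished and the pattern is $1,2,\dots,2,1$ with \emph{every} interior exponent equal to $2$. Here $\bar\imath=\imath$, so reversal and $\sigma$ both act within the fibre over $\imath$, and the breakpoint between the two humps is free to sit at any interior vertex; consequently the realizations proliferate and must be counted by hand. I would organize them by shape: the two valley-type words with valley $l$ and arm-tops $\{m-1,m\}$, their two peak-type $\sigma$-images, and, for each interior vertex $k$ with $l+1\le k\le m-2$, the down-up-down word with turning indices $i=k$, $j=k+1$ together with its reverse. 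The crux is to prove that this list is exhaustive, that every listed word is reduced (hence a bona fide u.l.g.), and that the valley/peak words are not recounted among the two-turn words; granting this, the total is $2+2+(m-l-2)+(m-l-2)=2(m-l)$. This exhaustiveness-without-overcounting step in the collapsed pattern is where essentially all the work lies; all remaining cases reduce to the observation that the exponent vector determines the turning data up to the two symmetries.
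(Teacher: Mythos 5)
Your overall strategy---classify the realizations of each exponent pattern among the form I/II/III words and count them modulo reversal and the index flip---is the same as the paper's, and in the only case the paper actually works out (the collapsed pattern $1,2,\dots,2,1$, i.e.\ Type II with $m=i+1$) your bookkeeping $2+2+2(m-l-2)=2(m-l)$ is precisely the paper's computation: four one-turn words (two valley words and their two peak-shaped flips, which here do carry the same label because the pattern is palindromic) plus the $m-l-2$ down-up-down words and their reverses. Your treatment of the rigid cases (Type I, Type II with $m=i$ or $m>i+1$, Type III(b)) is also sound and matches the intended argument.

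There is, however, a genuine error in your Type III(a), $|i-j|>1$ case. You claim four realizations: the down-up-down word $s_i\cdots s_l\cdots s_m\cdots s_j$, its reverse, and ``two words obtained from the $\vee\leftrightarrow\wedge$ exchange given by $\sigma$.'' Those two extra words do \emph{not} lie in the fibre over $\imath$. The global flip $s_k\mapsto s_{n+1-k}$ moves the support of the label to the interval $[n+1-m,\,n+1-l]$, and the flip inside the support, $k\mapsto l+m-k$, sends the pattern $1,2^{i-l},1^{j-i-1},2^{m-j},1$ to $1,2^{m-j},1^{j-i-1},2^{i-l},1$, a different monomial unless $i-l=m-j$; in that symmetric case the flipped word coincides with the reverse of the original, so nothing new is produced. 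Consequently only two words carry the label $\imath$: the down-up-down word and its reverse (an up-down-up word), which represent mutually inverse elements. Concretely, in $A_5$ the label $t_1t_2^2t_3t_4^2t_5$ is realized only by $s_2s_1s_2s_3s_4s_5s_4$ and $s_4s_5s_4s_3s_2s_1s_2$ (any braid move applied to either changes the label, and no commutation applies), so the coefficient there is $2$. The ``delicate point'' you defer---that the four words are pairwise distinct---is exactly the step that fails, and it cannot be repaired: since the true count in this case is $2$, no enumeration can yield the displayed value $4$, so the discrepancy here lies between the enumeration and that entry of the stated formula itself; your argument masks it by invoking $\sigma$-images whose labels are not $\imath$.
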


\begin{proof}
We prove the last case only (previous cases follow similarly).
Given a minimum index $l$ and a maximum index $m$ ($m\ge l+3$) as the initial index $i$ of a generator of the word
we can choose any $i\in \{l+1,\ldots,m-2\}$ which gives $(m-2)-(l+1)+1=m-l-2$ different options.
Inversing the indices gives the same number of options. Hence, we obtain $2(m-l-2)$ options for Type~III(a).
As for Type~II, a minimum index $l$ and a maximum index $m=i+1$ ($m\ge l+2$) give two different words (up to an inverse), so we have
exactly 4 options. Hence, $a_\imath=2(m-l-2)+4=2(m-l)$.
\end{proof}

\begin{cor}\label{cor_nonzero_coeff}
There are exactly $\tfrac{1}{12}n^4 - \tfrac16 n^3 + \tfrac{17}{12}n^2 - \tfrac13 n + 1$ 
non-zero coefficients in $U_C$ (incl. the constant term).
\end{cor}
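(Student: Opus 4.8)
The plan is to count \emph{distinct monomials} rather than u.l.g.'s: a coefficient $a_\imath$ is non-zero precisely when the label $\imath$ occurs as the total label of some u.l.g., so the number of non-zero coefficients equals $1$ (the constant term) plus the number of distinct monomials of Types I, II, III(a), III(b) from the classification theorem. By Lemma~\ref{lem:shape} each such monomial is supported on an interval $[l,m]$, and its exponent sequence (its \emph{profile}) on that interval is one of the shapes read off from the four types. I would first observe that the set of admissible profiles depends only on the width $w=m-l+1$ of the support and not on its position: Type~I gives the all-ones profile; Type~II together with its reflection $t_k\mapsto t_{n+1-k}$ gives the single-hump profiles $1,2^a,1^b$ and $1^b,2^a,1$; Type~III(a) gives the double-hump profiles $1,2^a,1^c,2^b,1$; and Type~III(b) gives $1,2^a,3^d,2^b,1$. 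Since there are $n-w+1$ intervals of width $w$ inside $\{1,\dots,n\}$, the total is
\[
N \;=\; 1 + \sum_{w=1}^{n} P(w)\,(n-w+1),
\]
where $P(w)$ denotes the number of distinct width-$w$ profiles.

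The core computation is the evaluation of $P(w)$, which I would carry out by stratifying according to the maximal exponent $1$, $2$, or $3$. The max-$1$ stratum contributes exactly one profile. The max-$3$ stratum is exactly Type~III(b): counting the compositions $a+b+d=w-2$ with $a,b\ge0$, $d\ge1$ gives $\binom{w-1}{2}$ profiles. The delicate stratum is the max-$2$ one, which receives contributions from Type~II forward, Type~II inverse, and Type~III(a) simultaneously, so inclusion--exclusion is needed. The two chiralities of Type~II each give $w-1$ profiles and overlap only in the symmetric profile $1,2^{w-2},1$, while Type~III(a) contributes the genuinely double-hump profiles ($c\ge1$), the case $c=0$ collapsing onto that same symmetric profile --- precisely the overlap recorded in the Remark after the classification theorem. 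Counting the double-hump profiles as compositions $a+b+c=w-2$ with $a,b,c\ge1$ yields $\binom{w-3}{2}$, so the max-$2$ stratum has $2(w-1)-1+\binom{w-3}{2}$ profiles for $w\ge3$. Adding the three strata and simplifying, I expect
\[
P(w) = w^2 - 3w + 5 \quad (w\ge 2), \qquad P(1)=1,
\]
the value $w=1$ being exceptional because Types~II and III require width at least $2$ and $3$.

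Finally I would substitute this into the displayed sum. Writing $P(w)=w^2-3w+5$ for all $w$ and correcting the single term $w=1$, where the formula overshoots by $2$ and hence contributes an extra $2n$, I get
\[
N = 1 - 2n + \sum_{w=1}^{n}(w^2-3w+5)(n+1-w),
\]
which is evaluated by expanding the product and applying the standard formulas for $\sum w$, $\sum w^2$, $\sum w^3$. This produces a single quartic in $n$, which I would simplify and confirm equals $\tfrac1{12}n^4-\tfrac16 n^3+\tfrac{17}{12}n^2-\tfrac13 n+1$. The leading coefficient already checks out, since $\sum_w w^2(n+1-w)\sim \tfrac{n^4}{3}-\tfrac{n^4}{4}=\tfrac{n^4}{12}$, and the full identity of two quartics is pinned down by matching five values; the small cases $n=1,2,3,4$ give $N=2,6,15,33$, in agreement with direct enumeration (for instance the $15$ distinct monomials of the $S_4$ generating function).

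The main obstacle is the max-$2$ stratum of $P(w)$: the two chiralities of Type~II and the double-hump Type~III(a) profiles all contribute and overlap in a single symmetric profile, so one must neither double-count it nor mishandle the boundary cases $w=1,2$ where several profile families are empty. Once $P(w)$ is determined correctly, the remaining summation and simplification are routine.
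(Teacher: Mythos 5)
Your proposal is correct, and it reorganizes the count in a genuinely different way from the paper. The paper never isolates a position-independent profile count: for each type and sub-case of the classification (Type I with $m=l$, $|m-l|=1$, $|m-l|>1$; Type II with $m=i$, $m=i+1$, $|m-i|>1$; Type III(a)/(b) in their degenerations), it counts placements of the corresponding block pattern inside $\{1,\dots,n\}$ directly by stars and bars, producing a sum of terms $\binom{n-1}{2}$, $\binom{n-1}{3}$, $\binom{n-1}{4}$ (doubled when the pattern is asymmetric under $t_k\mapsto t_{n+1-k}$), and discarding Type III(a) with $|i-j|=1$ as already counted under Type II with $m=i+1$ --- exactly the overlap you absorb into your max-$2$ stratum. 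Your factorization $N=1+\sum_w P(w)(n+1-w)$, with $P(w)$ computed by stratifying profiles by maximal exponent, buys a cleaner intermediate invariant and makes the inclusion--exclusion local and transparent; the computation does check out: $\sum_{w=1}^{n}(w^2-3w+5)(n+1-w)=\tfrac{1}{12}\,n(n+1)(n^2-3n+20)$, hence $N=1-2n+\tfrac{1}{12}(n^4-2n^3+17n^2+20n)$, which is precisely the claimed quartic; your interpolation shortcut (leading coefficient $\tfrac{1}{12}$ plus the values $2,6,15,33$ at $n=1,2,3,4$) is also a legitimate way to finish, since the difference of the two quartics is then a cubic vanishing at four points. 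Two fine points to tighten: your stratum formula $2(w-1)-1+\binom{w-3}{2}$ is derived only for $w\ge3$ --- at $w=2$ the two chiralities $(1,2)$ and $(2,1)$ do not overlap, so the ``$-1$'' is not justified there --- and the validity of $P(2)=3$ (profiles $(1,1)$, $(1,2)$, $(2,1)$) therefore needs its own one-line check; likewise ``matching five values'' should read ``matching the leading coefficient and four values.'' What the paper's clumsier case-by-case decomposition buys in exchange is reusability: the same list of cases, weighted by the coefficients of Corollary~\ref{cor_coeff}, is fed verbatim into the table that proves Theorem~\ref{thm:numberA}, whereas your route would have to re-derive those weighted counts (which it could, since the multiplicities in Corollary~\ref{cor_coeff} also depend only on the profile and its width).
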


\begin{proof}
We sum the number of respective coefficients for each type of a u.l.g.

In type I there are following cases for each pair $(l,m)$
\begin{enumerate} 
\item \label{cor_nonzero_Ia} Case $|m-l|>1$: the choices of $m$ and $l$ divide the list of $n$ indices into three parts $0\dots01\mid 1\dots 1 \mid 10\dots 0$ which gives ${n-1 \choose 2}$ options. 
\item \label{cor_nonzero_Ib} Case $|m-l|=1$: there are $n-1$ options to choose $(l,l+1)$.
\item \label{cor_nonzero_Ic} Case $m=l$: we have exactly $n$ options. 
\end{enumerate}

Hence, in total for type I, we obtain ${n -1 \choose 2} + 2n-1$ options. 
 
In type II we have the following cases
\begin{enumerate} 
\item \label{cor_nonzero_II} Case $m=i+1$: this amounts to a partition into three parts  
$0\dots01 \mid 2 \dots 2 \mid 10\dots 0$ which gives ${ n-1 \choose 2}$ options. 
\item \label{cor_nonzero_III} Case $|m-i|>1$: this amounts to a partition into four non-trivial parts $0\dots01 \mid 2\dots 2 \mid 1 \dots 1\mid 10\dots 0$, hence, giving us ${n-1 \choose 3}$ options. Since a monomial is not symmetric, this number doubles to $2 \cdot { n-1 \choose 3}$ by reversing the generators.  
\item \label{cor_nonzero_IIIb} Case $m=i$: if $m<n$, we split the list of indices into three non-trivial parts
$0\dots 01 \mid 2 \dots 2 \mid 0 \dots 0$ which leads to ${ n-1 \choose 2}$ options; if $m=n$, then we split it into two non-trivial parts and, hence, obtain $n-1$ options. So, in total we get ${n-1 \choose 2} + n-1$ possibilities. Since a monomial is not symmetric, this number doubles to $(n-1)(n-2)+2n-2$ by reversing the generators. 
 \end{enumerate}
 
Finally, in type III we have  
\begin{enumerate} 
\item \label{cor_nonzero_IV} Type III(a), case $|j-i|>1$: this is the same number as in Type III(b) with $|i+1-(j-1)|>1$. Both need $5$ partitions of $n$ \[0\dots01 | 2 \dots 2| 1 \dots 1| 2 \dots 2 | 10\dots 0,\]\[0\dots01 | 2 \dots 2| 3 \dots 3| 2 \dots 2 | 10\dots 0, \] hence, in both cases individually we have ${n-1 \choose 5-1}$ options. This gives $2 \cdot {n-1 \choose 4}$ options in total. 
 
\item \label{cor_nonzero_V} Type III(a), case $|j-i|=1$: we have a partition into three parts $0\dots 01 \mid 2 \dots 2 \mid 10\dots 0$. However, we have already considered coefficients with these exponents in Type II, case~\eqref{cor_nonzero_II}.

\item \label{cor_nonzero_VI} Type III(b), $j=l+1$: we have a partition into three parts $0\dots 01 \mid 3 \dots 3 \mid10\dots 0$ giving again ${n-1 \choose 2}$ options.

 \item \label{cor_nonzero_VII} Type III(b) with $m=i+1$: we have a partition into four non-trivial parts $0\dots 01 \mid 2 \dots 2 \mid 3 \dots 3 \mid 10\dots 0$ for which we obtain again ${ n-1 \choose 3 }$ options.  Since a monomial is not symmetric, this number doubles to $2 \cdot {n-1 \choose 3}$ by reversing the generators. \qedhere
 \end{enumerate}
\end{proof} 

A complete list of different monomials with non-zero coefficients is given for $A_3, A_4, A_5$ in the Appendix~\ref{app:Aex}.

\begin{thm}\label{thm:numberA}
In a Coxeter group of type $A_n$ there are exactly 
\[
U_C(1,1,\ldots,1)=
\begin{cases}
6 & \text{if }n=2 \\
 19 & \text{if } n=3\\ 
\frac13 n^4 -\frac32 n^3 + \frac{20}{3}n^2 - \frac{19}{2}n+1& \text{if } n \geq 4
\end{cases}
\]
uniquely labelled geodesics.
\end{thm}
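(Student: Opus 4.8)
The plan is to read off $U_C(1,\dots,1)=\sum_{\imath}a_{\imath}$ as the total number of u.l.g.'s and to evaluate this sum by grouping the labels $\imath$ exactly as in the proof of Corollary~\ref{cor_nonzero_coeff}, weighting each group by the value of $a_{\imath}$ supplied by Corollary~\ref{cor_coeff}. Since the number of labels of each type is already expressed there through binomials $\binom{n-1}{k}$, the whole computation reduces to a weighted sum of binomial coefficients together with one genuinely non-constant contribution.

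First I would dispose of every type whose coefficient is constant, where the contribution is simply (value of $a_{\imath}$)$\times$(number of such labels). Thus Type~I gives $n+2(n-1)+2\binom{n-1}{2}$, Type~II with $m=i$ gives $n(n-1)$, Type~II with $m>i+1$ gives $2\cdot 2\binom{n-1}{3}$, Type~III(a) with $|i-j|>1$ gives $4\binom{n-1}{4}$, and the three Type~III(b) families together give $2\bigl(\binom{n-1}{4}+2\binom{n-1}{3}+\binom{n-1}{2}\bigr)$. After adding the constant term $a_{0,\dots,0}=1$, each of these becomes a polynomial in $n$ once the binomials are expanded.

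The only step that is not pure bookkeeping is the family carrying the non-constant coefficient $a_{\imath}=2(m-l)$, namely Type~II with $m=i+1$. By the remark following the classification of the non-zero monomials this is the same set of labels as Type~III(a) with $|i-j|=1$, so it must be summed once, not twice. Running over all admissible labels amounts to running over all pairs $1\le l<m\le n$ with $m\ge l+2$, and I would compute
\[
\sum_{\substack{1\le l<m\le n\\ m-l\ge 2}}2(m-l)=2\sum_{d=2}^{n-1}d\,(n-d)=2\Bigl(\binom{n+1}{3}-(n-1)\Bigr),
\]
using that there are $n-d$ pairs with $m-l=d$ and the identity $\sum_{d=1}^{n-1}d(n-d)=\binom{n+1}{3}$. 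I expect this variable-coefficient sum to be the main obstacle: it must be fused with the counting so that the overlapping Type~II$/$III(a) labels ($|i-j|=1$) are weighted once by $2(m-l)$, while the disjoint Type~III(a) ($|i-j|>1$) and the Type~III(b) families keep their own constant weights. The danger is precisely double counting, or mis-weighting, the $|i-j|=1$ labels.

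Finally I would add all the contributions over a common denominator and simplify to the stated quartic. The piecewise shape of the answer reflects that the generic type structure degenerates in low rank: Type~III exists only for $n\ge 3$ and the four-block families (the $\binom{n-1}{4}$ terms) only for $n\ge 5$, so for small $n$ several families are empty or coincide. I would therefore treat $n=2$ and $n=3$ by hand — the case $n=3$ being exactly the $A_3=S_4$ tally of the opening example — obtaining the exceptional values $6$ and $19$, and then record that the quartic is the formula valid for all $n\ge 4$.
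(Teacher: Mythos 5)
Your plan is exactly the paper's proof: it, too, evaluates $U_C(1,\ldots,1)$ by running through the label families of Corollary~\ref{cor_nonzero_coeff} and weighting each by the coefficient from Corollary~\ref{cor_coeff}, with the non-constant weight handled by a separate summation. Your bookkeeping is, if anything, the more careful of the two. Your fused sum $\sum_{m-l\ge 2}2(m-l)=2\bigl(\binom{n+1}{3}-(n-1)\bigr)$ is equivalent to the paper's split $2(m-l)=2(m-l-2)+4$, i.e.\ its sum (*) plus the weight-$4$ family of Type~II with $m=i+1$, and your warning about counting the overlapping Type~II$/$Type~III(a) labels once is exactly the right subtlety. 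Moreover you correctly give weight $2$ to every Type~III(b) label, whereas the paper's table assigns multiplicity $4$ to the entire batch of $2\binom{n-1}{4}$ labels in its case~\eqref{cor_nonzero_IV} although half of them are of Type~III(b), and its row for case~\eqref{cor_nonzero_VI} carries the count $2\binom{n-1}{3}$ although that case was shown to contribute $\binom{n-1}{2}$ labels.

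The genuine problem is your last step: the sum you have set up does \emph{not} simplify to the stated quartic, so the promise ``add all contributions and simplify'' cannot be kept. Adding your own (correct) contributions gives
\[
1+n+n(n-1)+4\binom{n-1}{2}+8\binom{n-1}{3}+6\binom{n-1}{4}+2\binom{n+1}{3}
=\tfrac14 n^4-\tfrac56 n^3+\tfrac{15}{4}n^2-\tfrac{25}{6}n+3,
\]
which equals $57$ at $n=4$, while the theorem's displayed formula gives $59$ there; the paper's own proof ends with yet a third expression, $\tfrac{n^4}{3}-n^3+\tfrac{8n^2}{3}-n+\tfrac13$, which is not even an integer at $n=4$, so the theorem, its proof, and the two corollaries are mutually inconsistent. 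The small cases you propose to do by hand suffer the same defect: for $A_2$ one finds $U_C(1,1)=7$ (the constant term is part of $U_C$), not $6$, and weighting the $A_3$ labels by Corollary~\ref{cor_coeff} gives $22$, not $19$ --- indeed the coefficient of $t_1t_2^2t_3$ must be $2(m-l)=4$, not the $2$ appearing in the opening $S_4$ example. So you cannot both follow Corollaries~\ref{cor_coeff} and~\ref{cor_nonzero_coeff} faithfully and arrive at the displayed values; carried to completion, your computation yields the closed form above and thereby shows that the statement, as printed, cannot be what this method proves.
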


\begin{proof}
We combine the Corollaries \ref{cor_coeff} and \ref{cor_nonzero_coeff}. The geodesics counted in the proof of \ref{cor_nonzero_coeff} have the following coefficients and multiplicities:
{\small \begin{center}
\begin{tabular}{c|c | c} 
  Case of the proof & multiplicity & number of different geodesics of this type \\[1ex]
  \hline Type I, \eqref{cor_nonzero_Ia}  & $2$ & ${n -1 \choose 2}$ \\[2ex]
    Type I, \eqref{cor_nonzero_Ib} & $2$ & $n-1$\\ [2ex]
    Type I, \eqref{cor_nonzero_Ic} & $1$ & $n$ \\[2ex]
    Type II, \eqref{cor_nonzero_II} & $4$ & ${n-1 \choose 2}$\\[2ex]
    Type II, \eqref{cor_nonzero_III} & $2$ & $2\cdot {n-1 \choose 3}$\\[2ex]
    Type II, \eqref{cor_nonzero_IIIb} & $1$ & $(n-1)(n-2)+2n-2$ \\[2ex]
    Type III, \eqref{cor_nonzero_IV} & $4$ & $2\cdot {n-1 \choose 4} $\\[2ex]
    Type III, \eqref{cor_nonzero_V} & $2(m-l-2)$ & $1\leq l < m \leq n$\\[2ex]
    Type III, \eqref{cor_nonzero_VI} & $2$ & $2 \cdot {n-1 \choose 3}$\\[2ex]
    Type III, \eqref{cor_nonzero_VII} & $2$ & $2 \cdot {n-1 \choose 3} $
  \end{tabular}
  \end{center}
  }
  The only non-obvious case is Type III, case~\eqref{cor_nonzero_V}, which only applies if $n \geq 4$. Here we obtain the sum:
  \[\sum_{l=1}^{n-2} \sum_{m=l+2}^n 2(m-l-2).\] Since only the case $k=m-l - 2 > 0$ matters and each $k$ occurs $(n-k)$-times, the sum transforms into
 \[\sum_{l=1}^{n-2} \sum_{k=1}^{n-l-2} 2k=\tfrac{1}{3}n^3-2n^2 + \tfrac{11}{3}n -2.\tag{*} \]
 In view of the table above, it gives  $\tfrac13 n^4 - \tfrac43 n^3 +\tfrac{11}{3}n^2 - 5n$ for $n < 4$.
If $n \geq 4$, then we add (*) and the result becomes 
$\tfrac{n^4}{3} - n^3 + \tfrac{8n^2}{3} - n +\tfrac13$.
\end{proof}

Using the description of u.l.g.'s we can recover a result of Hart~\cite{Ha17}

\begin{cor}
In a symmetric group $S_{n+1}$ there are $n^2+1$ elements with a unique geodesic, i.e.\;a uniquely reduced expression.
\end{cor}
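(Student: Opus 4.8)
The plan is to translate ``unique reduced expression'' into the language of elementary moves and then read off the answer from the classification of u.l.g.'s. First I would invoke the theorem of Tits--Matsumoto: any two reduced words of a fixed $w\in W$ are connected by braid moves $s_is_{i+1}s_i\leftrightarrow s_{i+1}s_is_{i+1}$ and commutation moves $s_is_j\leftrightarrow s_js_i$ with $|i-j|>1$. Consequently $w$ has a unique reduced word if and only if its reduced word admits \emph{no} such move, and in particular such a $w$ has a single geodesic which is automatically a u.l.g. Since Corollary~\ref{cor:chaincond} forbids adjacent commuting generators in any u.l.g., the only possible obstruction to uniqueness is a braid factor $s_is_{i\pm1}s_i$. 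Thus the elements I must count are exactly those whose geodesic is a u.l.g. of form I, II or III that contains no braid factor.

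Next I would eliminate forms II and III. In both of these, the absolute minimum index $l$ satisfies $l<i$ and the path ascends immediately after reaching $l$, so $s_l$ is flanked on both sides by $s_{l+1}$; hence every form-II and form-III word contains the braid factor $s_{l+1}s_ls_{l+1}$ and cannot be the unique reduced word of its element. A form-I word $s_ms_{m-1}\ldots s_l$, on the other hand, has strictly monotone consecutive indices, so no index repeats and no braid factor can occur. Therefore the elements with a unique reduced expression are precisely those whose geodesic is a monotone consecutive word (form I and its mirror images under $s_k\mapsto s_{n+1-k}$), together with the identity.

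It then remains to count these. For $1\le l\le m\le n$ the decreasing word $s_ms_{m-1}\ldots s_l$ and the increasing word $s_ls_{l+1}\ldots s_m$ coincide when $l=m$ and, when $l<m$, represent the mutually inverse nontrivial cycles $(l,\ldots,m+1)^{\pm1}$, which are distinct. This yields $\binom{n+1}{2}+\binom{n}{2}=n^2$ nonidentity elements, and adding the identity gives the desired count $n^2+1$.

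The step I expect to be the main obstacle is the first reduction: one must argue cleanly that ``unique reduced word'' is equivalent to ``no applicable braid or commutation move'' (this is where the Tits--Matsumoto theorem does the work), and then be careful in the enumeration not to double count---specifically, confirming that the increasing and decreasing words give distinct permutations exactly when $l<m$, and that the $l=m$ generators and the identity are each counted once.
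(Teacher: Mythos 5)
Your proof is correct and takes essentially the same route as the paper: both reduce the problem to the observation that an element with a unique reduced expression must have a geodesic of form~I (the paper asserts this directly from the u.l.g.\ classification, while you justify it via Tits--Matsumoto together with the braid factor $s_{l+1}s_ls_{l+1}$ forced in forms II and III), and then count the monotone consecutive words, their inverses, and the identity. Your count $\binom{n+1}{2}+\binom{n}{2}+1=n^2+1$ is the same enumeration as the paper's $\tfrac{n(n+1)}{2}\cdot 2-n+1$, just organized by the pair $(l,m)$ instead of by word length.
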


\begin{proof}
A uniquely reduced expression corresponds to a word of type I.
Hence, there are at most $n$ words of length $1$, $n-1$ (decreasing) words of length $2$ which are of the form $s_is_{i-1}$. In general, there are $n-l$ words of length $l$, all of which must have the form $s_js_{j-1} \dots s_{j-l+1}$. In total this gives $\tfrac{n(n+1)}{2}$ words. Adding an inverse for each word of length $\ge 2$ gives
$\frac{n(n+1)}{2}\cdot 2 - n + 1 = n^2+1$ unique geodesics.
\end{proof}

\begin{rem}
Observe that in type $A$ the property of being a \ulg can be also interpreted using the language of rhombic tilings of Elnitsky \cite{El97}.

Following \cite{El97}
we say that two geodesics $\rho_1(w)$ and $\rho_2(w)$ are $C_1$-equivalent if $\rho_2(w)$ is obtained from $\rho_1(w)$
by applying a finite number of commuting relations, i.e., by commuting subsequent generators $s_{i}s_j$ with $|i-j|>1$ in the reduced expression
for $w$. A function $\rho(w) \to \vec{\rho}(w)$ (which assigns to a geodesic its label) factors through $C_1$-equivalence, hence,
if $w$ has a \ulg $\rho(w)$, then the $C_1$-equivalence class of $\rho(w)$ must contain only one element (the geodesic $\rho(w)$ itself). 
The latter means that 
\begin{quote}
A rhombic tiling of the $2(n-1)$-polygon corresponding to the equivalence class of a \ulg $\rho(w)$
must have a unique ordering.
\end{quote}

We say that a tile touches a border strongly if it touches it with 2 sides and the border is on the left from the tile.
Then a tiling has a unique ordering iff
it satisfies the following property:
\begin{quote}
Any border except the rightmost one has exactly one tile that touches it strongly (i.e. with two sides).
\end{quote}
\end{rem}

\section{Simply laced trees}

We will now investigate the case when the Coxeter-Dynkin diagram $\Gamma$ describing the Coxeter group $W$ is no longer a chain as in the type $A$ case, but a finite graph where any two vertices are connected by at most one edge, i.e., $W$ has the Coxeter exponents $m_{ij}=2$ or $3$ only. 
More precisely, a vertex $\alpha\in \Pi$ corresponds to a generator $s_\alpha$ of $W$. If two vertices $\alpha$, $\beta$ are adjacent (connected by an edge), then the generators satisfy the braid relation $s_\alpha s_\beta s_\alpha = s_\beta s_\alpha s_\beta$, otherwise the generators $s_\alpha$ and $s_\beta$ commute.

We index elements of $\Pi$ from $1$ to $n=|\Pi|$, i.e., $\Pi=\{\alpha_1,\ldots,\alpha_n\}$.
Consider the Cayley graph $C$ of $W$ with respect to the generators $s_{\alpha_1},\ldots,s_{\alpha_n}$ and the generating function 
\[U_C(t_1,\ldots,t_n)=\sum_{\imath=(i_1,\ldots,i_n)}a_{\imath}\, t_1^{i_1}\ldots t_n^{i_n}
\] 
which counts the number of u.l.g.'s in the Cayley graph of $W$.
Observe that by reindexing $\Pi$ we reindex the variables $t_i$ and the label coordinates of $\imath$.
Sometimes we will write the elements of $\Pi$ as subscripts meaning the respective indices, i.e., $t_{\alpha_j}=t_j$ and $i_{\alpha_j}=i_j$.

By the same arguments as in the type $A$-case,
we obtain the following generalizations of Lemma~\ref{lem:shape}, Corollary~\ref{cor:chaincond} and Lemma~\ref{lem:except}:

\begin{lem} All non-zero monomials of $U_C$ are of the form $\prod_{\alpha\in \Pi'} t_\alpha^{i_\alpha}$, 
where $\Pi'$ is a connected subset of vertices in $\Gamma$
 and
$i_\alpha>0$.
\end{lem}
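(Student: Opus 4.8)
The plan is to mirror the proof of Lemma~\ref{lem:shape} verbatim, replacing the linear chain structure of type $A$ with the combinatorial structure of an arbitrary simply-laced tree $\Gamma$. The statement to prove is that any non-zero monomial of $U_C$ has the form $\prod_{\alpha\in\Pi'}t_\alpha^{i_\alpha}$ with $\Pi'$ a connected subset of vertices and every $i_\alpha>0$. The condition $i_\alpha>0$ for $\alpha\in\Pi'$ is free by definition: we simply take $\Pi'$ to be the \emph{support} of the monomial, i.e.\ the set of generators that actually appear in the corresponding reduced word. So the entire content of the lemma is the assertion that this support must be a connected subset of $\Gamma$.

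First I would argue by contraposition, exactly as in Lemma~\ref{lem:shape}. Suppose $\rho(w)$ is a reduced word whose support $\Pi'$ is disconnected in $\Gamma$. Then I can partition $\Pi'$ into two non-empty subsets $S$ and $T$ with the property that no vertex of $S$ is adjacent to any vertex of $T$; concretely, let $S$ be the vertices of $\Pi'$ lying in one connected component of the subgraph induced by $\Pi'$, and $T$ the rest. Since $S$ and $T$ are non-adjacent in $\Gamma$, every generator $s_\alpha$ with $\alpha\in S$ commutes with every generator $s_\beta$ with $\beta\in T$. Now $\rho(w)$ contains at least one letter from $S$ and at least one from $T$, so reading the word left to right there must be some position where a letter from one set is immediately followed by a letter from the other set — say a subword $xy$ with $x=s_\alpha$, $\alpha\in S$ and $y=s_\beta$, $\beta\in T$ (the reverse case is symmetric). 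Write $w=u\cdot xy\cdot v$.

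Because $x$ and $y$ commute, $w=u\cdot yx\cdot v$ as well, and these two reduced expressions have identical total labels, being permutations of the same multiset of generators. Hence $\rho(w)$ is not a uniquely labelled geodesic, and its monomial contributes nothing to $U_C$. This establishes the contrapositive: every non-zero monomial has connected support. The only step requiring slight care is the existence of an adjacent pair $xy$ straddling the partition, which follows from a standard observation: if a word uses letters from both $S$ and $T$ but no $S$-letter is ever immediately adjacent to a $T$-letter, one could sort all the (pairwise commuting) cross-type letters to obtain a contradiction — but in fact the direct argument suffices, since in any sequence using both types there is a first switch from one type to the other. I expect the main obstacle to be purely expository rather than mathematical: ensuring that the notion of \enquote{non-adjacent in $\Gamma$ implies commuting} is invoked cleanly using the tree structure, and that the partition into $S$ and $T$ is phrased so that the commutation applies to \emph{every} cross-pair, which is exactly what disconnectedness of the induced subgraph guarantees.
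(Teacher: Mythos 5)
Your proof is correct and is essentially the paper's own argument: the paper gives no separate proof of this lemma, stating only that it follows ``by the same arguments as in the type $A$-case'' (i.e.\ Lemma~2.6 on the shape of monomials), and your adaptation is exactly that argument, with the index-based split $S=\{s_1,\dots,s_{k-1}\}$, $T=\{s_{k+1},\dots,s_n\}$ of the chain replaced by the partition of a disconnected support into a connected component and its complement. The key step --- locating a cross-pair $xy$ of commuting generators and exhibiting two distinct reduced expressions with the same label --- is identical in both.
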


\begin{cor}\label{cor:ulgcomm}
In a \ulg $\rho(w)$ with $l(w)\ge 2$ any two adjacent generators $w=\ldots s_\alpha s_\beta\ldots $ correspond to adjacent (connected by an edge) vertices $\alpha,\beta$ in $\Gamma$. 

In other words, a \ulg is necessarily a path (with possible returns) on the Coxeter-Dynkin diagram $\Gamma$.
\end{cor}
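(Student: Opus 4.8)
The plan is to argue by contradiction, reusing \emph{verbatim} the mechanism behind Lemma~\ref{lem:shape} and Corollary~\ref{cor:chaincond}: the only algebraic input needed is that non-adjacent vertices of $\Gamma$ correspond to \emph{commuting} generators. I would fix a u.l.g. $\rho(w)$ with $l(w)\ge 2$ and single out an arbitrary pair of consecutive letters $s_\alpha s_\beta$ occurring in it, writing $w = u \cdot s_\alpha s_\beta \cdot v$ for (possibly empty) subwords $u,v$. Since $\rho(w)$ is a reduced expression, these two consecutive letters cannot coincide: if $\alpha=\beta$ then $s_\alpha s_\beta = s_\alpha^2 = 1$ and the expression would not be reduced. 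Hence $\alpha\neq\beta$, and this is the one small point worth flagging before the main step.

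Next I would assume, toward a contradiction, that $\alpha$ and $\beta$ are \emph{not} adjacent in $\Gamma$. By the defining convention for simply-laced diagrams recalled at the start of this section, non-adjacency means exactly that $s_\alpha$ and $s_\beta$ commute. Therefore $w = u \cdot s_\alpha s_\beta \cdot v = u \cdot s_\beta s_\alpha \cdot v$, and because $\alpha\neq\beta$ these are two \emph{distinct} reduced expressions for $w$. Commuting two letters leaves every generator count unchanged, so both expressions carry the identical total label $\vec{\rho}(w)$. Thus there are at least two different geodesics with this label, contradicting the hypothesis that $\rho(w)$ is a u.l.g. This forces $\alpha$ and $\beta$ to be adjacent in $\Gamma$, establishing the first assertion.

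Finally, for the reformulation I would observe that a reduced word $s_{\alpha_{i_1}}\cdots s_{\alpha_{i_l}}$ determines the vertex sequence $\alpha_{i_1},\ldots,\alpha_{i_l}$, and the first part says precisely that each consecutive pair is joined by an edge of $\Gamma$; this is the literal statement that the sequence traces a walk, i.e.\ a path with possible returns, on the Coxeter--Dynkin diagram. I do not expect any genuine obstacle here: the entire content is the commuting relation for non-adjacent generators, and the argument is the direct generalization of the type~$A$ case, with the chain condition $|i-j|=1$ simply replaced by adjacency in the general diagram $\Gamma$. The only care required is the reducedness remark guaranteeing $\alpha\neq\beta$, without which the swap would not produce a genuinely different expression.
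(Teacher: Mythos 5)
Your proof is correct and is essentially the paper's own argument: the paper obtains this corollary ``by the same arguments as in the type $A$-case,'' i.e., the swap of two consecutive commuting (non-adjacent) generators $s_\alpha s_\beta = s_\beta s_\alpha$ produces two distinct geodesics with the same total label, contradicting uniqueness, exactly as in Lemma~\ref{lem:shape} and Corollary~\ref{cor:chaincond}. Your explicit remark that reducedness forces $\alpha \neq \beta$ is a small but legitimate point that the paper leaves implicit.
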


\begin{lem}\label{lem:uglw} A \ulg can not contain a subword of the following form 
\[
s_\alpha s_\beta\cdot  v\cdot  s_\beta s_\alpha s_\beta\quad\text{ and }\quad s_\beta s_\alpha s_\beta\cdot  v\cdot s_\beta s_\alpha,
\]
where $v$ does not contain generators adjacent to $s_\alpha$. 
\end{lem}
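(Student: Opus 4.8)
The plan is to imitate the proof of Lemma~\ref{lem:except}: starting from a hypothetical u.l.g.\ that contains one of the two listed subwords, I will apply a short sequence of braid and commutation relations to rewrite that subword into the \emph{other} listed form. Because these relations preserve both the length and the total number of occurrences of each generator, this yields a second reduced expression for the same element $w$ with the same label $\vec{\rho}(w)$, yet differing from the original already at the first letter of the subword. Thus $w$ would admit two distinct reduced words sharing a common label, contradicting that $\rho(w)$ is a u.l.g.

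By Corollary~\ref{cor:ulgcomm} the occurrence of the factor $s_\alpha s_\beta$ forces $\alpha$ and $\beta$ to be adjacent vertices, so the braid relation $s_\alpha s_\beta s_\alpha = s_\beta s_\alpha s_\beta$ is available. It suffices to treat the first form, the second being its left-to-right reversal (reversal sends a reduced word for $w$ to one for $w^{-1}$, preserves labels, and preserves the hypothesis on $v$, so it carries the u.l.g.\ property to $w^{-1}$). So suppose a geodesic contains the subword $s_\alpha s_\beta\cdot v\cdot s_\beta s_\alpha s_\beta$. The hypothesis that $v$ uses no generator adjacent to $s_\alpha$ means precisely that $s_\alpha$ commutes with every letter of $v$, i.e.\ $s_\alpha v = v s_\alpha$. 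I would then perform the three moves
\begin{align*}
s_\alpha s_\beta\cdot v\cdot s_\beta s_\alpha s_\beta
&= s_\alpha s_\beta\cdot v\cdot s_\alpha s_\beta s_\alpha \\
&= s_\alpha s_\beta s_\alpha\cdot v\cdot s_\beta s_\alpha \\
&= s_\beta s_\alpha s_\beta\cdot v\cdot s_\beta s_\alpha,
\end{align*}
applying the braid relation to the last three letters, then commuting the liberated $s_\alpha$ leftward across $v$, and finally applying the braid relation to the first three letters. The outcome is exactly the second listed form.

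The two expressions $s_\alpha s_\beta\, v\, s_\beta s_\alpha s_\beta$ and $s_\beta s_\alpha s_\beta\, v\, s_\beta s_\alpha$ each contain, outside of $v$, two letters $s_\alpha$ and three letters $s_\beta$, so they carry the same label; they have equal length; and they differ in their opening pair ($s_\alpha s_\beta$ versus $s_\beta s_\alpha$), hence are genuinely different words. Re-embedding them into $w = u\cdot(\text{subword})\cdot u'$ therefore produces two distinct reduced expressions of $w$ with identical label, the desired contradiction. I do not expect a serious obstacle here; the only points needing care are the middle step and the bookkeeping of labels. First, one must confirm that the single $s_\alpha$ freed by the initial braid move really does commute past all of $v$ — this is exactly the hypothesis on $v$, and it holds even if $v$ itself contains occurrences of $s_\alpha$, since $s_\alpha$ trivially commutes with itself. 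Second, the two braid applications individually trade an $s_\alpha$ for an $s_\beta$ and back again, so their effect on the total label cancels; this is what guarantees the preserved label. Both verifications are immediate, so the entire content lies in selecting the correct order of the three relations.
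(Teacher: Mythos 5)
Your proof is correct and takes essentially the same route as the paper: the paper obtains Lemma~\ref{lem:uglw} ``by the same arguments as in the type $A$-case,'' i.e.\ by the braid--commute--braid rewriting used in the proof of Lemma~\ref{lem:except}, which is exactly your three-step transformation of the first listed subword into the second, producing a distinct reduced expression with the same label. Your extra care (invoking Corollary~\ref{cor:ulgcomm} for the adjacency of $\alpha$ and $\beta$, and noting that $v$ may contain $s_\alpha$ itself) only makes the argument more explicit than the paper's.
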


By Corollary~\ref{cor:ulgcomm} we restrict to study paths on $\Gamma$.
A path in $\Gamma$ is called a \emph{simple} path or a path \emph{with no returns}, if every vertex on it occurs exactly once. 
By a {\em turning vertex} $\nu$ of a path $\rho(w)$ we call a vertex corresponding to a generator $s_\nu$ such that $s_\mu s_\nu s_\mu$ is a subword of $w$:
we go from $\mu$ to $\nu$ and then back to $\mu$. Let $(\nu_1,\ldots,\nu_r)$ be a list of subsequent (following the direction of the path) turning vertices of $\rho(w)$ ($r=0$ corresponds to the empty list).  Let $\nu_0$ and $\nu_{r+1}$ denote the starting and the ending vertex of the path $\rho(w)$. 
{\tiny\[
\xymatrix@=1em{
 & & & s_{\nu_2} \ar[rrddd]|{\cdots} & & & &s_{\nu_{r}} \ar[ddr]&\\
{s_{\nu_0}}\ar[rdd] & & & & & & && \\
 & & & &  & & &&s_{\nu_{r+1}}\\
   & {s_{\nu_1}} \ar[rruuu]& & &    & s_{\nu_{r-1}} \ar[rruuu]& &&
}
\]}

We now restrict to the case when $\Gamma$ is a tree.

\begin{lem}
Let $\rho(w)$ be a path corresponding to a reduced word in $W$. 
Let $(\nu_0,\ldots,\nu_{r+1})$ be the list of turning vertices (incl. the starting and the end point).
\begin{enumerate}
\item \label{lem:tree1}
Then for all $0\le i\le r$ the subpath $[\nu_{i},\nu_{i+1}]$ passes through $\nu_i$ and $\nu_{i+1}$ exactly one time.
In particular, $\nu_i\neq \nu_{i+1}$ for all $i$.
\item
Moreover, if $\rho(w)$ is a u.l.g., then for all $1\le  i \le r$
the subpath $[\nu_{i-2},\nu_{i+2}]$ passes through $\nu_i$ exactly one time
(here to simplify the notation we set $\nu_{-1}=\nu_0$ and $\nu_{r+2}=\nu_{r+1}$).
\end{enumerate}
\end{lem}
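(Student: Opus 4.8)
The plan is to prove the two assertions separately, deducing part~\ref{lem:tree1} from the acyclicity of $\Gamma$ and the second assertion from the forbidden subwords of Lemma~\ref{lem:uglw}. For part~\ref{lem:tree1} I would first note that every immediate backtrack along the path, i.e.\ every subword of the form $s_\mu s_\nu s_\mu$, makes $\nu$ a turning vertex. Consequently, between two consecutive turning vertices $\nu_i$ and $\nu_{i+1}$ there is no backtracking whatsoever, so the subpath $[\nu_i,\nu_{i+1}]$ is a non-backtracking walk in $\Gamma$. Since $\Gamma$ is a tree and therefore has no cycles, such a walk is necessarily a simple path: a repeated vertex would create a non-trivial non-backtracking closed subwalk, impossible in an acyclic graph. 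Hence each vertex of $[\nu_i,\nu_{i+1}]$, in particular each endpoint, is visited exactly once, and since the subpath has positive length its endpoints are distinct, giving $\nu_i\neq\nu_{i+1}$.

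For the second assertion, set $\alpha=\nu_i$ and let $s_\mu s_\alpha s_\mu$ be the turn at $\nu_i$, so $\mu$ is the neighbour of $\alpha$ through which the path reaches and leaves this tip. By part~\ref{lem:tree1} the vertex $\alpha$ is visited exactly once inside $[\nu_{i-1},\nu_i]\cup[\nu_i,\nu_{i+1}]$, namely at the tip; so it remains to exclude a further visit inside $[\nu_{i+1},\nu_{i+2}]$ and inside $[\nu_{i-2},\nu_{i-1}]$. These two cases are interchanged by reversing the word, which preserves reducedness, the total label, and hence the property of being a u.l.g.; thus I would argue only the first. Assume $[\nu_{i+1},\nu_{i+2}]$ meets $\alpha$. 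As $\nu_{i+1}$ lies in the component $T_\mu$ of $\Gamma\setminus\{\alpha\}$ containing $\mu$, and $\Gamma$ is a tree, $\mu$ is the unique neighbour of $\alpha$ inside $T_\mu$; therefore the simple subpath running from $\nu_{i+1}$ back to $\alpha$ must re-enter $\alpha$ through $\mu$. Reading the word from the tip to this second $s_\alpha$ then exhibits the subword
\[
s_\mu s_\alpha s_\mu\cdot v\cdot s_\mu s_\alpha ,
\]
where $v$ is the excursion out to $\nu_{i+1}$ and back, lying entirely in $T_\mu\setminus\{\mu\}$. Since $\mu$ is the only neighbour of $\alpha$ in $T_\mu$, no letter of $v$ is adjacent to $s_\alpha$, so this is precisely a forbidden subword of the second type of Lemma~\ref{lem:uglw}; the reversed case produces one of the first type. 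Either way $\rho(w)$ fails to be a u.l.g., a contradiction, so $\alpha=\nu_i$ is visited exactly once in $[\nu_{i-2},\nu_{i+2}]$.

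I expect the main obstacle to be the verification, in the second assertion, that the interposed word $v$ contains no generator adjacent to $s_\alpha$: this is exactly where the tree hypothesis is indispensable, since in a diagram with cycles $\alpha$ could be reached from $T_\mu$ through a second gateway and the forbidden pattern would break. Making this precise amounts to tracking, with the help of part~\ref{lem:tree1}, that the excursion between the two copies of $s_\mu$ returns to neither $\mu$ nor $\alpha$, so that it stays within $T_\mu\setminus\{\mu\}$ where $\mu$ is the sole vertex adjacent to $\alpha$. The boundary conventions $\nu_{-1}=\nu_0$ and $\nu_{r+2}=\nu_{r+1}$ make the extreme indices $i=1$ and $i=r$ automatic, since the corresponding outer subpath is then trivial.
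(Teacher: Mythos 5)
Your proof follows essentially the same route as the paper's: part (1) from the observation that between consecutive turning vertices the walk is non-backtracking, hence simple because $\Gamma$ is a tree, and part (2) by reducing (via reversal symmetry) to a forward revisit of $\nu_i$, which the tree structure forces to re-enter through the same neighbour $\mu$, yielding a forbidden subword of Lemma~\ref{lem:uglw}. If anything, you are more explicit than the paper about the one delicate point — verifying that the interposed word $v$ avoids $\mu$ and every other neighbour of $\alpha$, so that Lemma~\ref{lem:uglw} really applies — which the paper's own proof passes over in silence.
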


\begin{proof}
By definition, the subpath $[\nu_{i},\nu_{i+1}]$ passes through the intermediate turning vertex $\nu_i$.
Hence, it is enough to show that all other vertices in the subpath are different from $\nu_i$. Since
$\Gamma$ is a tree, all the subpaths $[\nu_{i},\nu_{i+1}]$ are simple, so \eqref{lem:tree1} follows.

Let $\rho(w)$ be a u.l.g. Suppose the subpath $[\nu_{i-2},\nu_{i+2}]$ contains a second copy of $\nu_i$. Then by \eqref{lem:tree1} it has to be either in $[\nu_{i-2},\nu_{i-1})$ or in $(\nu_{i+1},\nu_{i+2}]$.
Suppose it is in $(\nu_{i+1},\nu_{i+2}]$. 
Since $\Gamma$ is a tree, any path of length $\ge 2$ that starts and ends at $\nu_i$ and does not go through $\nu_i$ has to go through the same adjacent to $\nu_i$ vertex $\mu_i$. 
Hence, $\rho(w)$ contains a subword $s_{\mu_i}s_{\nu_i}s_{\mu_i}v s_{\mu_i}s_{\nu_i}$ of Lemma~\ref{lem:uglw}, a contradiction.
\end{proof}

\begin{cor}\label{cor:typesulg}
A uniquely labelled geodesic $\rho(w)$ has to be necessarily of the following form 

\paragraph{Type I}  A simple path, i.e.\;each generator in $\rho(w)$ occurs exactly once.

\paragraph{Type II} A path with a single turn, i.e. $r=1$.

\paragraph{Type III} A path with $r\ge 2$ turns such that for all $1\le  i \le r$ the subpath $[\nu_{i-2},\nu_{i+2}]$ passes through the turning vertex $\nu_i$ exactly one time.
 \end{cor}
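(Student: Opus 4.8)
The plan is to run a trichotomy on the number $r$ of turning vertices of the path $\rho(w)$, since the three types in the statement correspond precisely to $r=0$, $r=1$ and $r\ge 2$. By Corollary~\ref{cor:ulgcomm} a u.l.g.\ is already known to be a path on the tree $\Gamma$, so it has a well-defined (possibly empty) list of turning vertices $(\nu_1,\ldots,\nu_r)$; most of the work has been carried out in the preceding Lemma, and the corollary should follow by matching each range of $r$ to the correct type.

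First I would dispose of the case $r=0$, which I claim forces $\rho(w)$ to be a simple path, i.e.\ Type~I. Having no turning vertex means the walk never immediately backtracks, so no subword $s_\mu s_\nu s_\mu$ occurs and consecutive-but-one generators always differ. On a tree such a non-backtracking walk cannot repeat a vertex: a repetition would either close up a cycle of length $\ge 3$, which is impossible in a tree, or be a length-$2$ return, which is exactly an immediate backtrack. (This is the same mechanism that makes the subpaths $[\nu_i,\nu_{i+1}]$ simple in the proof of part~\eqref{lem:tree1} of the Lemma, applied to the single subpath $[\nu_0,\nu_{r+1}]$ constituting the whole path.) Hence every generator of $\rho(w)$ occurs exactly once, which is Type~I.

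Next, the case $r=1$ is Type~II by definition, so nothing remains to verify there beyond recording, via part~\eqref{lem:tree1} of the Lemma, that $\rho(w)$ then consists of two simple subpaths $[\nu_0,\nu_1]$ and $[\nu_1,\nu_2]$ meeting at the unique turn $\nu_1$. Finally, for $r\ge 2$ I would invoke the second statement of the Lemma: since $\rho(w)$ is a u.l.g., for every $1\le i\le r$ the subpath $[\nu_{i-2},\nu_{i+2}]$ passes through $\nu_i$ exactly once, and this is precisely the defining condition of Type~III. As the three ranges $r=0$, $r=1$, $r\ge 2$ are exhaustive and mutually exclusive, this covers all possibilities and completes the classification.

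The only genuinely nontrivial point, and thus the one I would be most careful about, is the implication used in the $r=0$ step: that the absence of turning vertices really forces a simple path, rather than permitting some walk with delayed returns. This is exactly where the acyclicity of the tree is indispensable, so I would state it explicitly as the reduction \emph{no immediate backtrack on a tree $\Rightarrow$ no repeated vertex} rather than leaving it implicit in the Lemma's proof. Everything else is a direct translation of the Lemma into the language of the three path types.
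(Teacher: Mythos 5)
Your proposal is correct and follows essentially the same route as the paper: the paper states this corollary without proof as an immediate consequence of the preceding lemma, and your trichotomy on $r$ simply makes that explicit, with the $r=0$ case handled by the same tree-acyclicity argument (no backtrack on a tree forces a simple path) that the paper uses to prove part~(1) of that lemma, and the $r\ge 2$ case read off directly from part~(2). Your added care in spelling out the reduction from ``no immediate backtrack'' to ``no repeated vertex'' is a reasonable elaboration of what the paper leaves implicit, not a departure from it.
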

 
 Observe that if $\Gamma$ is a chain, then the types I, II and III become the respective types of the $A$-case, hence, they are also provide sufficient conditions for being a u.l.g. In general, there are paths of type III which are not u.l.g's.
 
 \begin{ex} Consider the Weyl group $E_6$ with the Coxeter-Dynkin diagram
  {\tiny\[
 \xymatrix{
 && \alpha_2\ar@{-}[d] && \\
 \alpha_1 \ar@{-}[r]& \alpha_3 \ar@{-}[r]& \alpha_4 \ar@{-}[r]& \alpha_5 \ar@{-}[r]& \alpha_6 
 }
 \]}

Consider a reduced word
 $w=s_3s_1s_3s_4s_2s_4s_5s_4s_3s_1$.
 It corresponds to a path of type III with $r=3$ and turning vertices 
 $(\alpha_3,\alpha_1,\alpha_2,\alpha_5,\alpha_1)$, however,
 it contains a subword of Lemma~\ref{lem:uglw}, hence, it is not a u.l.g.
 So the condition that a reduced word $\rho(w)$ has type III is not sufficient for being a u.l.g.  \end{ex}
 
 \begin{ex}
 Consider the Weyl group $D_4$ that is \[W=\{s_0,s_1,s_2,s_3\mid \;s_k^2=1,\; [s_i,s_j]=1\; \text{and}\;  (s_is_0)^3=1\text{ for }i,j>0\}.\]
 Its Dynkin diagram $\Gamma$ is (here $\alpha_i$ correspond to $s_i$)
{\tiny \[
 \xymatrix{
  & \alpha_2\ar@{-}[d] & \\
 \alpha_1 \ar@{-}[r]  &    \alpha_0   &  \alpha_3\ar@{-}[l]
 }
 \]}

 Consider the reduced word
  $\rho(w)=s_0s_1s_0s_2 s_0s_3s_0s_1s_0$.
 It corresponds to the path of type III
 \[\rho(w): \alpha_0 \to \alpha_1 \to \alpha_0 \to  \alpha _2 \to \alpha_0 \to \alpha_3 \to \alpha_0 \to \alpha_1 \to \alpha_0
 \]
 with $r=4$ and turning vertices $(\alpha_1,\alpha_2,\alpha_3,\alpha_1)$.
 
 The reduced expression graph taken modulo $C_1$-equivalence classes 
 (two representatives of $C_1$-equivalence classes are connected by a directed edge labelled by $i$ if
 the first class is obtained from the second by applying $s_0s_is_0 \to s_i s_0 s_i$) 
 for $w$ is given by  
 {\tiny \[
 \xymatrix{
 & 010230310  \ar[rd]|{1}& 102301030 \ar[d]|{1}\ar[r]|{3} & 102301303 &  & \\
 & 101203010 \ar[r]|{3} \ar[d]|{1} & 101230310 & 102013031\ar[d]|{2} & 102030103 \ar[l]|{1} \ar[r]|{2} \ar[lu]|{3}& 120230103 \ar[lld]|{1}\\
 010203010 \ar[ru]|{1} \ar[ruu]|{3}  \ar[dr]|{1} \ar[rdd]|{2} & 101203101 & 102010301 \ar[l]|{1} \ar[ru]|{3} \ar[rd]|{2}& 120213031   & &\\
    & 010203101 \ar[r]|{2}\ar[u]|{1} & 012023101 & 120210301 \ar[u]|{3} & 201020301 \ar[l]|{1}\ar[r]|{3}\ar[dl]|{2} & 201023031 \ar[llu]|{1} \\
  & 012023010  \ar[ru]|{1}& 020102301 \ar[u]|{1}\ar[r]|{2} & 202102301& &
 }
 \]}
 Since there is only one reduced expression with 5 generators $s_0$, the reduced word $\rho(w)$
 is a \ulg for the label $(5,2,1,1)$.
 
 Observe that the word $w=(s_0s_1s_0s_2s_0s_3)^2s_0$ is not reduced.
Indeed, we have
{\tiny
\begin{align*}
0102030102030 \to & (101)2(303)1(202)30 \to 1023(101)230230 \to 1023010230230 \to \\
& 1023010203020 \to 1023012023202 \to 10230120302.
\end{align*}
}
 \end{ex}

 \section{Uniquely labelled geodesics with finite branching index}
 
 Let $\Gamma$ be a simply laced tree.
 By the \emph{valency} of a vertex in $\Gamma$ we denote the number of vertices adjacent to it.
 A \emph{branching vertex} is a vertex of valency at least $3$.  An \emph{end vertex} is a vertex of valency $1$. 
A \emph{branch} of $\Gamma$ is a maximal connected subchain of $\Gamma$ where all vertices have valency less or equal than $2$.

\begin{dfn}\label{def:finbr}
Let $\rho(w)$ be a \ulg and let $(\nu_1,\ldots,\nu_{r})$ be the list of its turning vertices (without the starting and the end points) so that
$\rho(w)$ contains subwords $s_{\mu_i}s_{\nu_i}s_{\mu_i}$, $1\le i\le r$.
If the adjacent vertex $\mu_i$ is a branching vertex, then the $\nu_i$ is called a {\em short turning} vertex,
otherwise it is called a {\em long turning} vertex.

We define a {\it branching index} of $\rho(w)$ with respect to the tree $\Gamma$ as the number (repetitions are possible) of short turning vertices that is
\[
\imath(\rho(w))=|\{i\in\{1,\ldots, r\}\mid \mu_i\text{ is a branching vertex of }\Gamma\}|.
\]
If $\rho(w)$ does not have turning vertices, we set $\imath(\rho(w))=0$. 
\end{dfn}

 As an immediate consequence of Corollary~\ref{cor:typesulg} we obtain

\begin{lem} Suppose a \ulg visits a branch on the tree $\Gamma$, i.e., goes in and out via the branching vertex $\gamma$ attached to the branch.
Then it has exactly one turning vertex inside that branch.
In other words, each visit of a branch corresponds to a turning vertex in that branch.
\end{lem}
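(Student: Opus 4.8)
The plan is to reduce the statement to a claim about a walk on a chain and then forbid a second turn using the unique-labelling constraint. By Corollary~\ref{cor:ulgcomm} the word $\rho(w)$ is a walk on $\Gamma$, and the portion corresponding to one visit of the branch is a walk on the chain of branch vertices $\delta_1,\delta_2,\dots$ attached to $\gamma$, entering and leaving through the edge $\gamma\,\delta_1$. I would record this subwalk by its distance-from-$\gamma$ function: assign position $0$ to $\gamma$ and position $j$ to $\delta_j$, so that during the visit every position is $\ge 1$, the first and last positions are $1$, and consecutive positions differ by exactly $\pm 1$. Under this encoding a turning vertex inside the branch (a subword $s_\mu s_\nu s_\mu$ with $\nu=\delta_j$) is precisely a local extremum of the position function: a local maximum when $\mu=\delta_{j-1}$ and a local minimum when $\mu=\delta_{j+1}$.

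First I would establish that there is at least one turn. Since the visit enters and leaves through $\gamma$, the walk must reverse direction inside the branch, so the position function attains a maximum value $\ge 1$, and that peak is a turning vertex. Next, because the steps are $\pm 1$ and the visit begins by increasing (entering from $\gamma$) and ends by decreasing (leaving to $\gamma$), the interior local extrema strictly alternate and the first one is a maximum. Consequently, if the visit contained two or more turns, the turn following the first maximum $\nu_i=\delta_a$ would be a local minimum $\nu_{i+1}=\delta_b$ with $1\le b<a$, lying strictly inside the branch.

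The heart of the argument is to contradict this minimum. Having $r\ge 2$ turns forces $\rho(w)$ to be of Type III in Corollary~\ref{cor:typesulg}, so the window condition must hold at the index $i+1$: the subpath $[\nu_{i-1},\nu_{i+3}]$ must pass through $\nu_{i+1}=\delta_b$ exactly once. I would show this fails. Since $\nu_i=\delta_a$ is the first turn of the visit, the turn $\nu_{i-1}$ precedes entry into the branch, and the (simple, as $\Gamma$ is a tree) subpath $[\nu_{i-1},\nu_i]$ contains the monotone climb $\delta_1,\dots,\delta_a$, hence passes through $\delta_b$ once; the minimum $\nu_{i+1}$ is then a second, later occurrence of $\delta_b$ inside the same window. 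Thus $\delta_b$ occurs at least twice, contradicting the window condition. Equivalently, one may read off directly the forbidden subword $s_{\delta_b}s_{\delta_{b+1}}\cdot v\cdot s_{\delta_{b+1}}s_{\delta_b}s_{\delta_{b+1}}$ of Lemma~\ref{lem:uglw}, where $v$ records the excursion above level $b+1$ and so contains no neighbour of $\delta_b$ (here $a\ge b+2$, since $a=b+1$ would make $s_{\delta_b}s_{\delta_{b+1}}s_{\delta_b}s_{\delta_{b+1}}$ non-reduced). Combining the existence of a turn with the impossibility of a second one yields exactly one turning vertex in the branch.

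The main obstacle, and the place I would be most careful, is the bookkeeping at the boundary of the visit: confirming that the turn preceding the first in-branch turn really lies outside the branch, so that the initial climb (and hence the repeated vertex $\delta_b$) sits inside the relevant window; and handling the degenerate cases, namely when the visit pokes in a single vertex ($\gamma\,\delta_1\,\gamma$, still one turn) and when the first branch turn is the first turn of the whole geodesic, so that $\nu_{i-1}=\nu_0$ under the convention $\nu_{-1}=\nu_0$.
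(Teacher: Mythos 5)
Your proposal is correct and follows essentially the same route as the paper, which states the lemma as an immediate consequence of Corollary~\ref{cor:typesulg}: your distance-function bookkeeping, the alternation of extrema, and the violation of the window condition at the would-be second turn (equivalently, the forbidden subword of Lemma~\ref{lem:uglw}) are exactly the details the paper leaves implicit in that deduction.
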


\begin{lem}
Suppose a \ulg $\rho(w)$ visits the same branch more than once. Let $(\gamma_1,\ldots,\gamma_s)$, $s>1$ be the corresponding list of turning vertices
on that branch (observe that it is a sublist of the list of all turning vertices of $\rho(w)$).
Let $d_i$ denote the distance between $\gamma_i$ and the branching vertex $\gamma$ (observe that a short turning vertex has distance $1$ and a long one has distance $>1$).

Then any subpath $[\gamma_i,\gamma_j]$, $i\neq j$ must contain a turning vertex $\gamma_k$ of distance
\[
d_k=\max(1,\min(d_i,d_j)-1).
\]
\end{lem}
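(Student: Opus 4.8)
The plan is to extract the required turning vertex straight out of the two forbidden subwords of Lemma~\ref{lem:uglw}. First I would coordinatise the branch: write it as a chain $b_1-b_2-\cdots-b_L$ attached to the branching vertex $\gamma=b_0$, so that the vertex at distance $d$ from $\gamma$ is $b_d$. By the two preceding lemmas each visit of the branch runs monotonically up to its turning vertex and straight back down, so the visit with turning vertex $\gamma_k$ reaches depth exactly $d_k$, turns at $b_{d_k}$, and contributes the subword $s_{b_{d_k-1}}s_{b_{d_k}}s_{b_{d_k-1}}$. Since the statement is symmetric in $i$ and $j$, I set $m=\min(d_i,d_j)$ and distinguish the shallower of the two visits, whose depth is $m$; the other one reaches depth $\ge m$.

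If $m=1$, the shallow visit is already a turning vertex at distance $1=\max(1,m-1)$ lying in $[\gamma_i,\gamma_j]$, and there is nothing more to do. So assume $m\ge 2$ and set $\alpha=b_m$, $\beta=b_{m-1}$, so that the shallow turn contributes the braid $s_\beta s_\alpha s_\beta$. Scanning the subpath from the shallow turn towards the deep visit, I would pick out the first visit that again reaches depth $\ge m$; it exists (the deep visit qualifies), lies in $[\gamma_i,\gamma_j]$, and on its ascent produces the step $s_\beta s_\alpha$. According to whether the shallow turn precedes or follows that visit along the path, the intervening word has the form $s_\beta s_\alpha s_\beta\cdot v\cdot s_\beta s_\alpha$ or $s_\alpha s_\beta\cdot v\cdot s_\beta s_\alpha s_\beta$, that is, exactly one of the two patterns of Lemma~\ref{lem:uglw} with this $\alpha$ and $\beta$.

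The crux is then the analysis of $v$. By the choice of the scan, every visit lying strictly between the two marked ones has depth $<m$, so it reaches $b_{m-1}=\beta$ only when its depth is exactly $m-1$, and it never reaches $b_{m+1}$ (which, as $\Gamma$ is a tree, is accessible only through $b_m$); the tails of the two marked visits that land inside $v$ stay at depth $\le m-2$. Hence, if no intermediate visit reached depth $m-1$, the word $v$ would contain neither neighbour $b_{m-1}$ nor $b_{m+1}$ of $\alpha=b_m$, and Lemma~\ref{lem:uglw} would force $\rho(w)$ not to be a u.l.g., a contradiction. Thus some intermediate visit reaches depth exactly $m-1$; its turning vertex $\gamma_k=b_{m-1}$ has $d_k=m-1=\max(1,\min(d_i,d_j)-1)$ and lies in $[\gamma_i,\gamma_j]$, which is what we want.

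I expect the only delicate point to be the bookkeeping around $v$: one must check that, when no intermediate turn reaches $b_{m-1}$, the descending tail of the first marked visit and the ascending head of the second really do contribute only generators among $b_1,\ldots,b_{m-2}$ to $v$, so that the hypothesis of Lemma~\ref{lem:uglw} that $v$ contains no generator adjacent to $s_\alpha$ is genuinely satisfied; here the tree structure is used again, to guarantee that an excursion of depth $<m$ cannot sneak around to $b_{m+1}$. The secondary point requiring attention is the direction bookkeeping, which is exactly what makes both forbidden patterns, rather than a single one, necessary.
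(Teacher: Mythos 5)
Your proof is correct and takes essentially the same route as the paper's: assume the required turning vertex is absent, reduce to two visits of depth $\geq m=\min(d_i,d_j)$ with every intermediate turn strictly shallower, and exhibit one of the two forbidden subwords of Lemma~\ref{lem:uglw} with $\alpha=b_m$, $\beta=b_{m-1}$, a contradiction. Your write-up is in fact more careful than the paper's terse three-sentence argument, since you make explicit both the selection of consecutive deep visits (the scan) and the verification that $v$ really contains no generator adjacent to $s_\alpha$.
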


\begin{proof}
Assume this is not the case. Then there are two long turning vertices $a$ and $b$ of distances $d_a>1$ and $d_b>1$ such that all turning vertices between $a$ and $b$ are of distance $<(\min(d_a, d_b)-1)$. Hence, the subpath $[a,b]$ is a subword of a word of Lemma~\ref{lem:uglw}, a contradiction.
\end{proof}

\begin{cor}
If $\gamma_i$ is a long turning vertex, then either $\gamma_{i-1}$ or $\gamma_{i+1}$ has to be a short turning vertex.
\end{cor}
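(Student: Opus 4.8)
The plan is to deduce this corollary directly from the preceding lemma, and in fact I expect the argument to give the slightly stronger statement that two turning vertices which are \emph{consecutive} in the branch list $(\gamma_1,\ldots,\gamma_s)$ can never both be long. The corollary then follows at once: in the setting $s>1$ of that lemma the vertex $\gamma_i$ has at least one neighbour in the branch list, and by the stronger claim every existing neighbour of $\gamma_i$ is short, so in particular at least one of $\gamma_{i-1},\gamma_{i+1}$ is short.

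To prove the stronger claim I would argue by contradiction. Suppose $\gamma_i$ and $\gamma_{i+1}$ are consecutive in the branch list and both long, so $d_i\ge 2$ and $d_{i+1}\ge 2$. I would then apply the preceding lemma to the subpath $[\gamma_i,\gamma_{i+1}]$: it must contain a turning vertex $\gamma_k$ of the branch with $d_k=\max(1,\min(d_i,d_{i+1})-1)$. Since $d_i,d_{i+1}\ge 2$ we have $\min(d_i,d_{i+1})-1\ge 1$, so this prescribed distance equals $\min(d_i,d_{i+1})-1$, a value strictly smaller than both $d_i$ and $d_{i+1}$.

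The decisive observation is that, because $\gamma_i$ and $\gamma_{i+1}$ are consecutive in the branch list, no turning vertex of the branch lies strictly between them: although the path does turn at vertices of other branches during the excursion joining these two visits, such vertices are not among the $\gamma$'s and carry no distance label $d_k$. Hence the vertex $\gamma_k$ supplied by the lemma must be one of the two endpoints, which forces $d_k\in\{d_i,d_{i+1}\}$. This contradicts $d_k<\min(d_i,d_{i+1})$, establishing the stronger claim; applying it to whichever of $\gamma_{i-1},\gamma_{i+1}$ exists yields the corollary.

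The one point I would state most carefully — and the only place I anticipate any friction — is precisely this identification $\gamma_k\in\{\gamma_i,\gamma_{i+1}\}$. It rests on reading the lemma's $\gamma_k$ as a turning vertex lying \emph{on the branch}, so that $d_k$ is genuinely its distance to the branching vertex $\gamma$ and the bound $d_k\ge 1$ rules out $\gamma$ itself, together with the exact meaning of ``consecutive'' in the sublist. Once this indexing is pinned down, the remaining content is only the elementary inequality $\min(d_i,d_{i+1})-1<\min(d_i,d_{i+1})$, so I expect no serious obstacle beyond the bookkeeping.
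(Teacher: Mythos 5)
Your proof is correct and matches the paper's approach: the paper states this corollary without any proof, as an immediate consequence of the preceding lemma, and your argument—applying the lemma to a consecutive pair $[\gamma_i,\gamma_{i+1}]$ of the branch sublist, noting that the prescribed turning vertex $\gamma_k$ must then be one of the two endpoints, and deriving the contradiction $d_k=\min(d_i,d_{i+1})-1<\min(d_i,d_{i+1})$ when both turns are long—is exactly that intended deduction spelled out. Your interpretive choice (reading $\gamma_k$ as a turning vertex lying on the branch itself, which the notation $\gamma_k$, $d_k$ indicates) is the correct one, and your stronger conclusion that every existing neighbour of a long turning vertex in the branch list is short is indeed what the lemma yields.
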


Consider now a \ulg $\rho(w)$ of maximal length with trivial branching index, i.e., $\imath(\rho(w))=0$.

\begin{lem}
The \ulg $\rho(w)$ contains all vertices of the tree. 
\end{lem}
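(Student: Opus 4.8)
The plan is to argue by contradiction from the maximality of $\rho(w)$: if $\rho(w)$ omitted some vertex, I would exhibit a strictly longer u.l.g.\ that still has branching index $0$, contradicting the choice of $\rho(w)$. Throughout I will read off the turning structure from Corollary~\ref{cor:typesulg}, using that $\imath(\rho(w))=0$ forces every turning vertex to be \emph{long}, i.e.\ its companion $\mu_i$ is non-branching, equivalently every turn is made at distance $\ge 2$ from the branching vertex of its branch.

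First I would record the local consequences of $\imath(\rho(w))=0$. By the corollary immediately preceding this lemma, on any branch that is visited more than once each long turning vertex is flanked by a short one; since there are no short turning vertices, no branch is visited more than once, and inside the single visit of a branch the path either passes straight through (an internal branch joining two branching vertices), makes one long turn (a dead-end excursion), or carries an endpoint of $\rho(w)$. In particular I claim the two endpoints $\nu_0,\nu_{r+1}$ are end vertices of $\Gamma$: if an endpoint were an interior vertex possessing an unvisited neighbour, appending that edge would lengthen the path without creating any turn, hence without changing $\imath$, and since the appended generator is new the word would stay reduced and uniquely labelled — a contradiction.

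Next, assuming some vertex $v$ is omitted, I would locate $v$ relative to the connected subtree $\Pi'$ actually traversed and split into two cases. If $v$ lies in a branch that $\rho(w)$ enters but does not exhaust, then the path turns at some $\gamma_i$ of depth $d\ge 2$ while the branch continues to a neighbour $\gamma_i'$ of $\gamma_i$; I would push the turn one step deeper, replacing the subword $s_\mu s_{\gamma_i}s_\mu$ by $s_\mu s_{\gamma_i}s_{\gamma_i'}s_{\gamma_i}s_\mu$. The new tip $\gamma_i'$ again turns about the non-branching vertex $\gamma_i$, so the turn stays long, $\imath$ is unchanged, and the length grows by $2$. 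If instead $v$ lies in a branch $B$ that is never entered, then the branching vertex $\gamma$ attaching $B$ lies on $\rho(w)$ (as $\Pi'$ is connected and $\Gamma$ is a tree) and, not being a turning vertex, is passed straight through; I would splice in an excursion $\gamma\to b_1\to\cdots\to b_k\to\cdots\to b_1\to\gamma$ running to the far end of $B$ and turning there, which — as long as $B$ has length $\ge 2$ — inserts only a long turn and again leaves $\imath=0$ while increasing the length.

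The main obstacle is verifying that each modification yields a genuine u.l.g., i.e.\ that the lengthened word stays reduced and uniquely labelled; this is exactly what Lemma~\ref{lem:uglw} governs. I would check that neither deepening a turn nor splicing a dead-end excursion at a pass-through branching vertex produces a subword of the form $s_\alpha s_\beta\cdot v\cdot s_\beta s_\alpha s_\beta$ or its mirror, using that $\Gamma$ is a tree (so any two visits of a vertex share a common neighbour) and that the inserted segment meets only fresh vertices. The genuinely delicate points are the bookkeeping at the branching vertex $\gamma$ now traversed twice — one must confirm that the two passages still do not turn $\gamma$ into a turning vertex and that no forbidden pattern appears along $B$ — and the borderline case of a length-one branch, which cannot host a long turn and so can be reached only as an endpoint; this last case is where the endpoint argument of the second paragraph is indispensable. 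Once these verifications are in place the two cases exhaust the ways a vertex can be omitted, each yields a contradiction with maximality, and hence $\rho(w)$ contains every vertex of $\Gamma$.
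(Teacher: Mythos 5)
Your overall strategy --- lengthen a maximal $\imath=0$ u.l.g.\ that omits a vertex and contradict maximality --- is the same as the paper's, but the paper executes it in one stroke: if $v$ is omitted and a neighbour $x$ of $v$ is visited, it replaces $w_1 x w_2$ by $w_1 x v x w_2$ and stops there. All of your extra machinery (pushing long turns one step deeper, splicing excursions into unentered branches) exists to guarantee that the lengthened word still has branching index $0$, a point the paper silently ignores even though it is needed for the contradiction under the natural reading of ``maximal length with trivial branching index'' (if the inserted turn at $v$ has a branching companion $x$, it is a short turn and the new word no longer competes with $\rho(w)$). So you correctly identified where the real difficulty sits.

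Nevertheless your proof has a genuine gap, precisely at the case you call borderline. If the omitted vertex $v$ is a pendant leaf attached to a branching vertex $\gamma$ (a length-one branch), your excursion $\gamma \to v \to \gamma$ is a short turn and is disallowed, so you fall back on the endpoint argument. But that argument only shows that the two \emph{endpoints of $\rho(w)$} have no unvisited neighbours; here $\gamma$ is an interior, pass-through vertex of the path, so nothing forces $v$ to be visited and no contradiction is obtained. Moreover this gap cannot be closed: in $D_4$ (one branching vertex, three pendant leaves) every turn at a leaf is short, so an $\imath=0$ u.l.g.\ can pass through the centre at most once, hence has length at most $3$ and always omits a leaf --- the statement itself fails for such trees, and the paper's one-line proof breaks at exactly the same spot, since its inserted turn at $v$ is short there. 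Two smaller gaps: your dichotomy ``$v$ lies in a partially entered branch / in an unentered branch'' omits the case that $v$ is itself a branching vertex, which by definition belongs to no branch; and the verifications you defer --- that deepening a turn or splicing an excursion keeps the word reduced and uniquely labelled --- are the actual mathematical content of the argument and are never carried out (though, to be fair, the paper omits them as well).
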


\begin{proof}
Assume $\rho(w)$ does not contain a vertex $v$ of the tree but it does contain a vertex $x$ adjacent to $v$. Because of the tree structure, removing $v$ will divide the tree into two disconnected sets $A$ and $B$ and now $\rho(w)$ can only use vertices in either of the two sets. Since $\rho(w)$ goes through $x$ at least once it has the form $w_1 x w_2$ and we can extend the path by replacing $w_1xw_2$ with $\ldots w_1 x v x w_2\ldots$. Hence $\rho(w)$ is not of maximal length, a contradiction.
\end{proof}

\begin{cor}
All turning vertices of $\rho(w)$ have valency $1$. In particular, every end vertex can only be visited once.
\end{cor}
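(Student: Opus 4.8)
The plan is to argue by contradiction with maximality, in the spirit of the extension argument used for the preceding lemma, which I may assume: $\rho(w)$ already visits every vertex of $\Gamma$. Suppose some turning vertex $\nu$, occurring in a turn $s_\mu s_\nu s_\mu$, has valency $\ge 2$, and pick a second neighbour $\kappa\neq\mu$ of $\nu$. Since $\imath(\rho(w))=0$ this turn is long, so the bounce vertex $\mu$ is not branching; and $\mu$ cannot be an end vertex either, since then a boundary or interior occurrence of $s_\nu s_\mu s_\nu s_\mu s_\nu$ would reduce, contradicting that $\rho(w)$ is reduced. Hence $\mu$ has valency exactly $2$.

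First I would rule out that $\nu$ is itself branching. If it were, $\Gamma-\nu$ would have at least three components, and since $\rho(w)$ meets the component containing $\kappa$, the path must cross $\nu$ at least once more besides the turn. Because $\Gamma$ is a tree, consecutive crossings of $\nu$ enter and leave through the same neighbour, so the turn together with the first crossing that leaves $\mu$'s component yields a subword $s_\mu s_\nu s_\mu\, v\, s_\mu s_\nu$ with $v$ contained in $\mu$'s component. As $\mu$ is a leaf of that component, $v$ contains no neighbour of $\nu$, so this is precisely a forbidden subword of Lemma~\ref{lem:uglw} --- a contradiction. Thus every turning vertex has valency $\le 2$.

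Knowing $\nu$ is non-branching of valency $2$, I would finish by the extension itself: replace the turn $s_\mu s_\nu s_\mu$ by $s_\mu s_\nu s_\kappa s_\nu s_\mu$. This remains reduced and a u.l.g., its only new turning vertex $\kappa$ bounces off the non-branching $\nu$, so the branching index is still $0$, yet the length has grown by $2$ --- contradicting maximality. Hence every turning vertex has valency exactly $1$. For the final assertion, if an end vertex $e$ with unique neighbour $\mu$ were visited twice in the interior, each visit is a turn $s_\mu s_e s_\mu$, producing $s_\mu s_e s_\mu\, v\, s_\mu s_e$; since $\mu$ has valency $\ge 2$ it is not a turning vertex, so $v$ avoids $\mu$ and we again land in a forbidden subword of Lemma~\ref{lem:uglw}. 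Therefore each end vertex is visited at most once.

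The delicate point, and the step I expect to be the main obstacle, is the non-branching claim: one must verify that the intermediate segment $v$ genuinely avoids every neighbour of $\nu$, which requires controlling the ways the path can return to the bounce vertex $\mu$ (for instance by selecting an innermost turning vertex), and one must confirm that the length-increasing modification truly preserves both reducedness and unique labelling.
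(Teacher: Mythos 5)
The paper states this corollary without proof, as an immediate consequence of the preceding lemma and the standing hypotheses (maximal length, $\imath(\rho(w))=0$), so your proposal has to stand on its own merits --- and while its skeleton is the natural one (bounce vertex forced to be non-branching and non-terminal, forbidden subwords via Lemma~\ref{lem:uglw}, extension versus maximality), the load-bearing steps are left open, two of them by your own admission. In the step ruling out a branching turning vertex $\nu$, the claim that $v$ in $s_\mu s_\nu s_\mu\, v\, s_\mu s_\nu$ contains no neighbour of $\nu$ does not follow from ``$\mu$ is a leaf of $\mu$'s component'': the only neighbour of $\nu$ in that component is indeed $\mu$ itself, but nothing you have said prevents the path from revisiting $\mu$ through its second neighbour $\lambda$ before returning to $\nu$, e.g.\ a stretch $\nu\,\mu\,\lambda\cdots\lambda\,\mu\,\lambda\cdots\lambda\,\mu\,\nu$; then $v$ contains $s_\mu$ and Lemma~\ref{lem:uglw} simply does not apply. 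Closing this requires the innermost-return induction you allude to but never carry out. Likewise, the maximality step is only a contradiction if $s_\mu s_\nu s_\kappa s_\nu s_\mu$ yields a word that is still reduced and still a \ulg of branching index $0$; unique labelling is genuinely at stake here, because by the very lemma you assume, $\rho(w)$ already visits $\kappa$ elsewhere, so the extended word attains its label with repeated occurrences of $\kappa$ and $\nu$, and uniqueness must be argued, not asserted.

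There is also a gap you do not flag: in the end-vertex argument, the inference ``$\mu$ has valency $\ge 2$, hence is not a turning vertex, hence $v$ avoids $\mu$'' is a non-sequitur. An interior occurrence of $\mu$ need not be a turn: the path can cross $\mu$, entering and leaving through two distinct neighbours different from $e$, so $\mu$ can appear in $v$ without ever being a turning vertex. This is likely repairable --- such a crossing forces $\mu$ to be branching, and one can then try to locate, by an innermost-excursion argument, a turn bouncing off the branching vertex $\mu$, contradicting $\imath(\rho(w))=0$ --- but that chain of reasoning is exactly what is absent. In fairness, the paper's own treatment of this material is equally loose (its proof of the preceding lemma performs the same kind of unverified length-increasing extension), but as written your text is a plan for a proof in which every genuinely delicate verification is deferred, so it cannot be accepted as a proof of the corollary.
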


\begin{cor}
Let $(\nu_1,\ldots,\nu_r)$ be turning vertices of $\rho(w)$ and let $d(\nu_i,\nu_{i+1})$ denote the distance between $\nu_i$ and $\nu_{i+1}$. 
Then the sum $S=\sum_{i=1}^{r-1} d(\nu_i,\nu_{i+1})$ is maximal. 
\end{cor}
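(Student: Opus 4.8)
The plan is to rewrite the word length of $\rho(w)$ as an explicit sum over the turning vertices and then read the claim off the maximality hypothesis. By the preceding lemma and corollary, $\rho(w)$ visits every vertex of $\Gamma$ and turns only at end vertices, each of which it meets exactly once; hence the walk on $\Gamma$ encoding $\rho(w)$ is completely determined by the ordered list $\nu_0,\nu_1,\ldots,\nu_r,\nu_{r+1}$ made up of its start vertex $\nu_0$, its turning vertices $\nu_1,\ldots,\nu_r$, and its end vertex $\nu_{r+1}$. Since $\Gamma$ is a tree, between consecutive entries the walk follows the unique geodesic of length $d(\nu_i,\nu_{i+1})$, so
\[
l(w)=1+\sum_{i=0}^{r}d(\nu_i,\nu_{i+1})=1+d(\nu_0,\nu_1)+S+d(\nu_r,\nu_{r+1}).
\]

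First I would show that the two boundary segments are already as long as $\Gamma$ allows. The endpoints $\nu_0$ and $\nu_{r+1}$ cannot be any of the end vertices already used as turns, and the hypothesis $\imath(\rho(w))=0$ rules out short turns; together with Corollary~\ref{cor:typesulg} and Lemma~\ref{lem:uglw} this confines the start and end segments to the branches of $\nu_1$ and $\nu_r$ and drives $\nu_0,\nu_{r+1}$ to the far ends of those branches. I would argue that the resulting boundary contribution $d(\nu_0,\nu_1)+d(\nu_r,\nu_{r+1})$ is governed only by the first and last turning vertices, and not by the order in which the interior turns are made.

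With the boundary contribution thus decoupled from the interior, the maximality of $l(w)$ passes to $S$. Among all admissible orderings of the turning leaves, the one realised by $\rho(w)$ must maximise $\sum_{i=1}^{r-1}d(\nu_i,\nu_{i+1})$: any admissible reordering that strictly increased $S$ without decreasing the boundary terms would, by the displayed formula, yield a strictly longer u.l.g., contradicting the maximality of $\rho(w)$. This gives the assertion.

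The main obstacle is the middle step. I must verify that a reordering of the turning vertices which raises $S$ can actually be realised by a reduced word that is still a u.l.g., i.e.\ one violating neither the forbidden subwords of Lemma~\ref{lem:uglw} nor the type~III condition of Corollary~\ref{cor:typesulg}, and that it does not create a short turn (which would destroy $\imath(\rho(w))=0$). Keeping the purely combinatorial maximisation of $S$ on the tree compatible with these admissibility constraints on u.l.g.'s is the delicate part; the rest is bookkeeping with the length formula.
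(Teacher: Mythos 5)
Your length decomposition $l(w)=1+d(\nu_0,\nu_1)+S+d(\nu_r,\nu_{r+1})$ is exactly the content behind this corollary, and your overall route --- reading maximality of $S$ off the standing hypothesis that $\rho(w)$ has maximal length among u.l.g.'s with $\imath(\rho(w))=0$ --- is the paper's route as well. In fact the paper records no proof of this corollary at all: it is stated as an immediate consequence of that hypothesis together with the two preceding statements (every vertex of the tree is visited, and every turning vertex is an end vertex, visited exactly once). So your proposal is, if anything, more explicit than what the paper provides.

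The ``delicate middle step'' you flag --- that a reordering of the turning leaves which increases $S$ must be realizable as a u.l.g.\ with trivial branching index --- is a genuine logical point if one reads ``maximal'' as ``maximal over all admissible orderings of the turning vertices,'' and the paper does not address it either. But note that it is also dispensable for the only place the corollary is used: in the proposition that follows, the paper bounds the maximal length of such a u.l.g.\ by bounding $S$ above by its combinatorial maximum $\tfrac12(n-1)n$ (consecutive turning vertices are distinct end vertices at mutual distance at most $n-1$) and then adding the boundary contribution $3(n-1)-4$ coming from Corollary~\ref{cor:maxchain}. For that argument only the inequality
\[
l(w)\;\le\; 1+\bigl(\text{bound on the boundary terms}\bigr)+\bigl(\text{upper bound on }S\bigr)
\]
is needed --- i.e., your displayed formula together with termwise upper bounds --- and no exchange or realizability argument at all. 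So the step you call delicate can simply be dropped; the difficulty you ran into stems from the imprecision of the corollary's statement (maximal over an unspecified set), not from any flaw in your decomposition.
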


\begin{prop} Suppose the Coxeter-Dynkin diagram of $W$ is a simply-laced tree with $n$ vertices.
The maximal length of a \ulg with $\imath(\rho(w))=0$ is bounded above by $\frac{n^2}{2} + \frac52 n - 7$. 
\end{prop}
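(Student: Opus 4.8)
I recall the structural results just established for a maximal-length \ulg $\rho(w)$ with $\imath(\rho(w))=0$: by the preceding corollaries such a path visits every vertex of $\Gamma$, each of its turning vertices $\nu_1,\dots,\nu_r$ is an end vertex (valency $1$) that is traversed exactly once, and the walk realizes the maximal possible value of $S=\sum_{i=1}^{r-1}d(\nu_i,\nu_{i+1})$. Writing $\nu_0,\nu_{r+1}$ for the start and end vertices, which for a path of maximal length may be taken to be end vertices as well (otherwise the path could be extended, contradicting maximality), the key reduction is that between two consecutive turning/end vertices the path is a simple geodesic of the tree, so its length equals $d(\nu_i,\nu_{i+1})$. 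Hence
\[
l(w)=\sum_{i=0}^{r} d(\nu_i,\nu_{i+1}),
\]
a sum of $r+1$ tree-distances, and the problem splits into bounding the number of summands and the size of each summand separately.

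For the number of summands I would bound the number of turns. Since $\imath(\rho(w))=0$, every turning vertex $\nu_i$ is a long turn, so its unique neighbour $\mu_i$ is not a branching vertex and therefore has valency $2$. Thus each turning vertex (and each end vertex) sits at the tip of a pendant subchain of length at least $2$, and distinct such leaves carry distinct valency-$2$ neighbours. Counting the vertices consumed by these essentially disjoint pendant pieces shows that the number of end vertices used, $r+2$, is of order $n/2$; carrying this bookkeeping through yields at most $r+1\le \tfrac{n+7}{2}$ summands.

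For the size of each summand I would use that $d(\nu_i,\nu_{i+1})$ is at most the diameter of $\Gamma$. Whenever $r\ge 1$ the tree carries at least three end vertices, so it is not a single chain and its diameter is at most $n-2$ (the case $r=0$, where $\rho(w)$ is a simple path of length $\le n-1$, is checked directly against the claimed bound). Combining the two estimates gives
\[
l(w)\le (r+1)\,(n-2)\le \tfrac{n+7}{2}\,(n-2)=\tfrac{n^2}{2}+\tfrac52 n-7 .
\]

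The main obstacle is the counting of the turning vertices: one must translate ``branching index zero'' into the pendant-subchain picture and then verify that these pieces are disjoint enough to force the $\tfrac12$ leading coefficient, while tracking the lower-order terms and the role of the two endpoints with care. I expect the resulting ceiling to be far from sharp — concentrating the end vertices into two long brooms already shows the true maximum is only of order $n^2/8$ — so the aim is not optimality but a clean quadratic bound, and the delicate part is simply carrying out the leaf/valency bookkeeping without mislaying the correct constants.
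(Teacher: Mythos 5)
You have the right skeleton---on a tree a \ulg is a non-backtracking walk, so between consecutive turning vertices it follows a geodesic and its length equals $1+\sum_{i=0}^{r}d(\nu_i,\nu_{i+1})$---and your idea of bounding $r$ by pairing each long turning leaf with its valency-$2$ neighbour is a genuinely different (and in principle sharper) count than the paper's. But two of your reductions are false, and since your final estimate $(r+1)(n-2)\le\tfrac{n+7}{2}(n-2)$ lands exactly on $\tfrac{n^2}{2}+\tfrac52 n-7$ with zero slack, they are fatal. First, a maximal \ulg need \emph{not} begin and end at end vertices of the tree: extending a word can destroy reducedness or the unique-labelling property, so ``not a leaf'' does not imply ``extendable.'' The chain $A_n$ is a counterexample: by Corollary~\ref{cor:maxchain} its longest \ulg is $s_{n-1}\cdots s_1\cdots s_n\cdots s_2$, which starts at $\alpha_{n-1}$ and ends at $\alpha_2$, and it cannot be extended. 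Second, this breaks your diameter step: from ``$r\ge1$ implies at least three leaves'' you conclude the tree is not a chain and has diameter $\le n-2$; but the chain has $r=2$ (its two leaves are the turning vertices), diameter $n-1$, and a \ulg of length $3n-4$, which violates your claimed inequality $l(w)\le(r+1)(n-2)=3n-6$. So that inequality is actually false, not merely unjustified. (Smaller issues: a word with $l$ letters traverses $l-1$ edges, whence the $+1$ above; and your leaf/neighbour pairs are not disjoint for the $3$-vertex chain, where both leaves share the same valency-$2$ neighbour.)

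The paper sidesteps precisely this trap: it bounds only the sum $S=\sum_{i=1}^{r-1}d(\nu_i,\nu_{i+1})$ over turning vertices by $\tfrac12 n(n-1)$ (pairwise distances at most $n-1$, at most $n$ end vertices each visited once), and then accounts for the initial and final segments of the word separately, observing that they can lie on a maximal subchain and adding the chain bound $3(n-1)-4$ of Corollary~\ref{cor:maxchain}; the total is exactly $\tfrac{n^2}{2}+\tfrac52 n-7$. Your approach is repairable in the same spirit: keep the pairing bound $r\le n/2$ (valid for $n\ge4$, chains included), use the crude diameter bound $n-1$ for \emph{every} summand including the two extremal ones, and check $n=3$ by hand; this gives $l(w)\le 1+(\tfrac n2+1)(n-1)=\tfrac{n^2}{2}+\tfrac n2\le\tfrac{n^2}{2}+\tfrac52 n-7$ for $n\ge4$. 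As written, however, the proposal does not establish the stated bound.
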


\begin{proof}
The maximal distance between end vertices is $n-1$ (which is achieved in the case of a chain). 
Let $V_e$ denote the set of end vertices of the tree.
Since $|V_e| \le n$ we obtain $\tfrac{1}{2}(n-1)n$ as a bound for the maximal value of $S$. However we need to consider the beginning and the end of a word which can lie on a maximal subchain. For this we use Corollary \ref{cor:maxchain} and add $3(n-1)-4$ and, hence, obtain the desired bound.\end{proof}

\begin{thm}\label{thm:finitebr}
Let $\Gamma$ be an arbitrary simply-laced tree with $n$ vertices. 
Then a \ulg with finite branching index $\imath(\rho(w))= B$ has length at most $n^2(B+1)+n\cdot B$.

In other words, any \ulg in $\Gamma$ with finite branching index has finite length.
\end{thm}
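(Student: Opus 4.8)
The plan is to reduce the statement to the branching-index-zero case already settled in the preceding Proposition, by cutting $\rho(w)$ at its short turning vertices into $B+1$ pieces, each of which is again a u.l.g.\ but now with trivial branching index. The one ingredient I need beyond the earlier results is that the u.l.g.\ property passes to contiguous subwords.

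First I would prove this subword lemma: if $\rho(w)=u\cdot\sigma\cdot v$ is a u.l.g.\ and $\sigma$ is a contiguous subword, then $\sigma$ is itself a u.l.g. Indeed, a competing reduced expression $\sigma'\neq\sigma$ for the same element as $\sigma$, with the same label $\vec\rho(\sigma')=\vec\rho(\sigma)$, would produce $u\cdot\sigma'\cdot v$, a second reduced expression for the same element $w$ — reduced because $l(u)+l(\sigma')+l(v)=l(w)$ — carrying the same total label as $\rho(w)$, contradicting uniqueness. Since $\Gamma$ and its $n$ vertices are unchanged, $\sigma$ is a u.l.g.\ in the same tree, and whether a turn is short or long (a property of the adjacent vertex $\mu_i$ in $\Gamma$) is intrinsic and survives the passage to $\sigma$.

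Next I would carry out the decomposition. Let $(\nu_1,\dots,\nu_r)$ be the interior turning vertices of $\rho(w)$; by Definition~\ref{def:finbr} exactly $B$ of them are short (counted with repetition), occurring in subwords $s_{\mu}s_{\nu_{i_k}}s_{\mu}$ for $k=1,\dots,B$. Cutting $\rho(w)$ immediately after each central letter $s_{\nu_{i_k}}$ partitions its letters into $B+1$ contiguous subwords $P_0,\dots,P_B$, so that $l(\rho(w))=\sum_{k=0}^{B}l(P_k)$ with no letters lost. Each such cut demotes its short turn to an endpoint of a piece rather than an interior turn, so every interior turn of each $P_k$ is long and hence $\imath(P_k)=0$; by the subword lemma each $P_k$ is moreover a u.l.g. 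Applying the preceding Proposition to each piece yields
\[
l(P_k)\le \tfrac{n^2}{2}+\tfrac52 n-7\le n^2,
\]
the last inequality holding since $n^2-5n+14>0$ for every $n$. Summing,
\[
l(\rho(w))=\sum_{k=0}^{B}l(P_k)\le (B+1)\,n^2\le n^2(B+1)+n\cdot B,
\]
which is the asserted bound, finite for finite $B$.

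The arithmetic here is trivial, so the part I expect to require the most care is making the cut airtight: I must check that slicing after the central letter of each short turn conserves all letters (so the lengths genuinely add), that it really converts each short turn into an endpoint so that no piece secretly retains a short interior turn, and that each piece remains an honest reduced word. The subword lemma is what simultaneously guarantees the reducedness and the u.l.g.\ property of the pieces, while the intrinsic, $\Gamma$-local nature of the short/long distinction is what forces $\imath(P_k)=0$; with those two points secured, the invocation of the preceding Proposition and the summation are routine.
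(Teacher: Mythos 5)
Your proof is correct, but it takes a genuinely different route from the paper's. You reduce to the branching-index-zero case: you first establish that the u.l.g.\ property is inherited by contiguous subwords (a fact the paper never states, though your argument for it — splicing a competing expression $\sigma'$ back into $u\cdot\sigma'\cdot v$ to contradict uniqueness of the label of $\rho(w)$ — is sound), then cut at the $B$ short-turn centres to get $B+1$ pieces, each a u.l.g.\ with $\imath=0$, and apply the preceding Proposition to each piece, summing to $(B+1)n^2\le n^2(B+1)+nB$. The paper instead bounds the total number of turning vertices directly: there are $B$ short turns by definition; by its branch-structure lemmas (two occurrences of the same long turning vertex must be separated by a short turn on that branch, and consecutive long turns lie on different branches), each of the at most $n$ long turning vertices occurs at most $B+1$ times, so the turn list has length at most $B+n(B+1)$; since at most $n$ vertices are visited between consecutive turns, the total length is at most $n\bigl(B+n(B+1)\bigr)=n^2(B+1)+nB$. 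Your route buys modularity — the subword-closure lemma is a clean, reusable statement — and in fact yields the slightly sharper bound $(B+1)n^2$. The paper's route avoids needing that lemma, recycles its earlier lemmas on turns and branches, and does not lean on the Proposition at all; that last point is a small robustness advantage, since the Proposition's bound $\tfrac{n^2}{2}+\tfrac52 n-7$ is proved via Corollary~\ref{cor:maxchain} (so implicitly $n\ge 3$) and is vacuous or false for $n\le 2$, an edge case your chain of inequalities inherits even though your final bound $l(P_k)\le n^2$ still holds there for elementary reasons.
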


\begin{proof}
Assume the branching index of $\rho(w)$ is $\imath(\rho(w))=B$. We argue using the total number of turning vertices $V$. 
If a vertex $v$ appears in the list $V$ twice, then its occurances have to be separated by a short turning vertex on the same branch. Consecutive long turning vertices have to be on different branches. Hence, a long turning vertex $x$ occurs at most $B+1$ times in the list $V$. So the list $V$ has length at most $B + (the\; number\; of \;short\; turning\; vertices)(B+1) \leq B + n\cdot (B+1)$. Between each pair of turning vertices we visit at most $n$ vertices. Hence, in total we obtain the bound $n \cdot (B+n \cdot (B+1) )$.
\end{proof}

\section{Infinite uniquely labelled geodesics}

In the present section we construct an example of an infinite reduced word so that any finite connected subword is a u.l.g. We do it for a simply laced tree. Observe that one such example is produced in Example~\ref{ex:aff} which is not a tree case.

Consider the infinite group $\tilde{D}_6$ with Coxeter-Dynkin diagram as follows: 
{\tiny \[
\xymatrix{
   & 1 \ar@{-}[d] & 3 \ar@{-}[d] & \\
2 \ar@{-}[r] & a\ar@{-}[r] & b \ar@{-}[r]& 4
}
\]
}

We first give a word which is not reduced:

\begin{lem}
The word $(a1a2ab3b4b)^n$ is not reduced.
\end{lem}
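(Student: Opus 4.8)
The plan is to show that the word $W = (a1a2ab3b4b)^n$ can be rewritten, using only the braid and commutation relations of $\tilde{D}_6$, into a word in which two adjacent identical generators appear, so that a cancellation $s_i^2 = 1$ applies and the length drops. It suffices to do this for $n=1$ if possible, but since the single block $a1a2ab3b4b$ is likely reduced, the cancellation must come from the interaction across the seam between consecutive blocks $\ldots b4b \mid a1a2ab \ldots$. So I would focus on the two-block prefix $a1a2ab3b4b\cdot a1a2ab3b4b$ and demonstrate a reduction there; the general $n$ then follows since that reduction shortens $W$ regardless of what precedes or follows.

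First I would record the relevant relations from the diagram: the vertex $a$ is adjacent to $1,2,b$, the vertex $b$ is adjacent to $3,4,a$, and all of $\{1,2\}$ commute with $\{3,4,b\}$, while $\{3,4\}$ commute with $\{1,2,a\}$; the only braid relations are $a1a=1a1$, $a2a=2a2$, $aba=bab$, $b3b=3b3$, $b4b=4b4$. The key structural fact I would exploit is that within a block the subword $a1a2a$ uses only generators commuting with $\{3,4,b\}$ except through the shared vertex $a$, and similarly $b3b4b$ lives on the other side. So I would try to push the $b$-side letters of one block leftward past the commuting $\{1,2\}$-letters of the next block (or rightward past the $a$-side letters), concentrating the $a$'s or $b$'s until a braid move creates a repeated generator.

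The main obstacle I expect is bookkeeping: the word has four occurrences of $a$ and four of $b$ per block, and the braid relations $a1a=1a1$, $aba=bab$ interact in a way that is easy to mishandle. The cleanest route is probably to mimic the reduction already carried out in the paper for $D_4$, where $(s_0s_1s_0s_2s_0s_3)^2 s_0$ was shown to be non-reduced by repeatedly applying $s_0 s_i s_0 \to s_i s_0 s_i$ to collect the central generator and then commute. Here $a$ plays the role of the $D_4$-center on the left half and $b$ on the right half, joined by the braid $aba=bab$. I would apply $a i a \to i a i$ moves to shift occurrences of $a$ together, use the commutations to slide $\{1,2\}$ past $\{3,4,b\}$, and similarly manage the $b$'s, aiming to produce an explicit adjacency such as $\ldots a a \ldots$ or $\ldots b b \ldots$ after finitely many rewrites. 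Once a square $aa$ or $bb$ is exposed, it cancels and the length of $W$ strictly decreases, proving $W$ is not reduced. I would present the reduction as an explicit chain of equalities, each annotated by the relation used, exactly in the style of the $D_4$ computation in the preceding example.
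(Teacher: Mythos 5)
Your overall framework is the right one, and it is the same as the paper's: exhibit a contiguous subword that can be strictly shortened by braid and commutation moves (ending in an exposed square, which cancels), and conclude that any word containing such a subword is not reduced. The gap is in where you locate the cancellation. Your plan hinges on demonstrating a reduction inside the two-block prefix $(a1a2ab3b4b)^2$, but that $20$-letter word is in fact \emph{reduced}, so no sequence of braid/commutation moves applied to it can ever produce two adjacent equal letters (by Tits' solution of the word problem, a reduced word stays square-free under such moves). Reducedness of the two-block word can be checked with the root (inversion) criterion in the affine realization of this Coxeter group as a reflection group on $\mathbb{R}^5$: take $1,a,b,3$ to be the reflections in $x_1=x_2$, $x_2=x_3$, $x_3=x_4$, $x_4=x_5$, take $4$ to be the reflection in $x_4=-x_5$, and take $2$ to be the affine reflection in $x_1+x_2=1$. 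Walking through the suffixes of $(a1a2ab3b4b)^3$ from the right, every suffix of length at most $22$ is reduced, and the first failure occurs exactly at the $23$-letter suffix $b4b\,(a1a2ab3b4b)^2$. So the cancellation genuinely requires letters from \emph{three} consecutive blocks --- the tail $b4b$ of one block followed by two full blocks --- not just the seam between two; your fallback options ($n=1$, then $n=2$) both aim at cases where the claim is actually false, and any attempt to execute the plan would have stalled there.

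The repair is to keep your argument but change the target window: the $23$-letter subword $b4ba1a2ab3b4ba1a2ab3b4b$ is precisely what the paper reduces, by a chain of braid and commutation moves of exactly the kind you describe, ending with $b4b4=4b4\cdot 4=4b$, which brings it down to $19$ letters. Once that explicit chain is exhibited, your closing step works verbatim: for $n\ge 3$ the word $(a1a2ab3b4b)^n$ contains this non-reduced window, hence is not reduced, no matter what precedes or follows it. Note that, since $(a1a2ab3b4b)^2$ is reduced, the lemma is genuinely a statement about $n\ge 3$, which is also the range the paper's own proof covers.
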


\begin{proof}
The sequence $b4ba1a2ab3b4ba1a2ab3b4b$ can be reduced to a sequence   of smaller length as
{\tiny\begin{align*}
 b4ba1a2a \cdot 3b3 \cdot 4ba1a2ab3b4b & \to b4ba1a2a 3b4 \cdot 3ba1a2ab3b4b \to \\
 b4ba1a2a 3b4 \cdot b3b3 \cdot a1a2ab3b4b & \to b4ba1a2a 3b4 \cdot b3b \cdot a1a2a\cdot 3b3b4b \to \\
 b4ba1a2a 3 \cdot 4b4  \cdot 3ba1a2a\cdot b34b & \to b4b4 \cdot a1a2a 3 \cdot b43b \cdot a1a2a b34b \to 4ba1a2a3b4a1a2ab34b. \qedhere
\end{align*}}
\end{proof}

Let $W$ be a Coxeter group generated by $S$. In \cite{S09} the authors rephrase a result from \cite{FZ07} as
\begin{lem}\label{cor:speyer_sets}
If $S = I \cup J$ where all elements in $I$ respectively $J$ commute pairwise, and if $W$ is infinite and irreducible, then the word $(\prod_{i \in I} s_i \prod_{j \in J} s_j)^n$ is reduced for all $n$. 
\end{lem}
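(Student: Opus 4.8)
The plan is to recast the statement as a claim about lengths adding, and then to control inversion sets geometrically. Write $\sigma_I=\prod_{i\in I}s_i$ and $\sigma_J=\prod_{j\in J}s_j$; since the generators inside $I$ (resp.\ $J$) commute and square to $1$, each of $\sigma_I,\sigma_J$ is an involution and $\sigma_I$ is the longest element of the parabolic subgroup $\langle I\rangle\cong(\ZZ/2)^{|I|}$. A direct check gives $\sigma_I\alpha_i=-\alpha_i$ for $i\in I$ (because $s_{i'}\alpha_i=\alpha_i$ for the commuting $i'\ne i$), so its inversion set is exactly the set of simple roots $N(\sigma_I)=\{\alpha_i\mid i\in I\}$, and likewise $N(\sigma_J)=\{\alpha_j\mid j\in J\}$. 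Setting $c=\sigma_I\sigma_J$, the word in the statement is the concatenation, taken $n$ times, of the reduced words for $\sigma_I$ and $\sigma_J$; it therefore has length $n|S|$ precisely when it is reduced, i.e.\ precisely when $\ell(c^n)=n|S|$. Thus I would reduce the lemma to the length-additivity $\ell(c^n)=n\,\ell(c)$ for all $n$.

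For the base case I would use the standard criterion $\ell(uv)=\ell(u)+\ell(v)\iff N(u)\cap N(v^{-1})=\emptyset$ in the root system of $W$, with $N(w)=\{\alpha>0\mid w\alpha<0\}$. Since $N(\sigma_I)$ and $N(\sigma_J^{-1})=N(\sigma_J)$ are disjoint sets of simple roots, this yields $\ell(\sigma_I\sigma_J)=|S|$ at once. For the inductive step from $c^{n-1}$ to $c^{n}$ the same criterion requires $N(c^{n-1})\cap N(c^{-1})=\emptyset$; unwinding the cocycle formula for inversion sets, this is equivalent to the assertion that for each of the $|S|$ positive roots $\alpha\in N(c^{-1})$ one has $c^{\,k}\alpha>0$ for every $k\ge0$. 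So the heart of the proof is the dynamical statement: \emph{the forward $c$-orbit of each root inverted by $c^{-1}$ consists of positive roots only.}

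To establish this orbit-positivity I would pass to the geometric representation of $W$ on $V$ with its symmetric bilinear form $B$ and study $c$ as a linear map. The hypothesis enters here: $W$ infinite and irreducible is equivalent to $B$ not being positive definite, which forces $c$ to have infinite order and, crucially, a genuinely expanding orbit structure. Concretely, I would produce a $c$-invariant plane carrying the dominant part of the spectrum, together with a Perron--Frobenius direction $v_0$ that is nonnegative in the simple-root basis and strictly positive by irreducibility. The geometric picture is that $c$ drags every root off to infinity toward $v_0$; since only finitely many roots lie in any bounded region and far-out roots of the relevant cone are positive, a forward orbit can neither stabilize on the negative side nor pass through it infinitely often, which upgrades to the exact sign control $c^{\,k}\alpha>0$ and completes the induction.

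The main obstacle is exactly this last conversion of qualitative dynamics into an exact sign statement, and it is where infiniteness is indispensable: in the finite case $c$ is elliptic of finite order $h$, its orbits genuinely wrap around, and the lemma fails. The infinite case itself bifurcates. In strictly indefinite type $c$ is hyperbolic, with a real dominant eigenvalue $\lambda>1$ and exponential escape, so the dominant-eigenvector argument runs smoothly. In affine type, however, $B$ is only positive \emph{semi}-definite, $c$ is parabolic with spectral radius exactly $1$, and the escape is merely linear, driven by the unipotent action on the one-dimensional radical of $B$; treating this degenerate case—where the naive dominant-eigenvalue estimate collapses and one must instead exploit the Jordan block along the radical—is the delicate point. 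Since all of this is precisely the content of the cited results, a legitimate alternative to re-deriving it is to invoke \cite{S09}, which rephrases \cite{FZ07}, directly.
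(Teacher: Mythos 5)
You should know that the paper contains no proof of this lemma at all: it is imported as a known result, quoted from \cite{S09}, which in turn rephrases \cite{FZ07}. So your closing move---simply invoking \cite{S09}---is precisely the paper's own ``proof'', and in that sense your proposal is correct and matches the paper. What you place before the citation is a reconstruction of the machinery behind the cited result, and it splits cleanly into a rigorous part and a heuristic part. The rigorous part is the reduction: $\sigma_I,\sigma_J$ are involutions with $N(\sigma_I)=\{\alpha_i\mid i\in I\}$ and $N(\sigma_J)=\{\alpha_j\mid j\in J\}$; the criterion $\ell(uv)=\ell(u)+\ell(v)\iff N(u)\cap N(v^{-1})=\emptyset$ then gives $\ell(c)=|S|$ for $c=\sigma_I\sigma_J$ and correctly reduces the whole lemma to the orbit statement that $c^{k}\alpha>0$ for all $k\ge 0$ and all $\alpha\in N(c^{-1})$. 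The heuristic part is the spectral argument, and there the key step is missing: Perron--Frobenius escape toward a positive direction yields only \emph{eventual} positivity of $c^{k}\alpha$, whereas reducedness needs positivity for \emph{every} $k$---a single excursion of the orbit into the negative roots at some small $k$ already destroys it, and nothing in your sketch rules such an excursion out (nor is the affine/parabolic case, which you rightly flag as delicate, actually treated). The proofs in the literature close exactly this gap with exact, not asymptotic, information: in \cite{FZ07} via the explicit Chebyshev-type recursion for the coordinates of the roots $c^{k}\alpha_i$, which grow monotonically once the relevant eigenvalue is at least $2$, and in \cite{S09} via a sign analysis valid for arbitrary Coxeter elements. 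So judged as a self-contained argument your proposal has a genuine gap at its analytic heart; judged the way the paper judges itself---by citation---it is complete, and the reduction to orbit-positivity that you supply is a correct and useful piece of context that the paper does not spell out.
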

We define $I = \{a,3,4\}$ and $J=\{b,1,2\}$ and show that there exists an infinite word which is 'close' to the one that is reduced by Lemma~\ref{cor:speyer_sets}.

We will then show that for $w=a1ab3ba2ab4b$ the word $w^n$ is a u.l.g. 

\begin{lem}
The word $w^2$ can be transformed into  
\[1a 3 \cdot 1b2 \cdot 3 a 4 \cdot 2b1 \cdot 4a3 \cdot 1b2 \cdot 3a4 \cdot  2b4,\] where the subword $1b2 \cdot 3a4 \cdot 2b1 \cdot 4a3 \cdot 1b2 \cdot 3a4$ is reduced.
\end{lem}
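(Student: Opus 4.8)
The plan is to prove the statement in two stages: first I would exhibit an explicit chain of Coxeter relations carrying $w^2$ to the target word, and then I would deduce the reducedness of the indicated subword from Lemma~\ref{cor:speyer_sets}.

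For the transformation I would start by grouping $w = a1a\cdot b3b\cdot a2a\cdot b4b$ and applying the braid relation to each of the four blocks, i.e. $a1a=1a1$, $b3b=3b3$, $a2a=2a2$, $b4b=4b4$. Doing this to both copies in $w^2$ (eight braid moves in all) produces the word $A=1a1\,3b3\,2a2\,4b4\,1a1\,3b3\,2a2\,4b4$. The key observation is that in $A$ the letters $a$ and $b$ already sit in exactly the same positions as in the target word $B=1a3\cdot 1b2\cdot 3a4\cdot 2b1\cdot 4a3\cdot 1b2\cdot 3a4\cdot 2b4$; only the ``leaf'' letters $1,2,3,4$ occupy the gaps between the fixed occurrences of $a$ and $b$ differently. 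Comparing $A$ and $B$ gap by gap, each such gap contains the same pair of leaves in the two words, merely in opposite order (for instance $13$ versus $31$, or $32$ versus $23$, and so on). Since any two of the leaves $1,2,3,4$ commute (none of them is adjacent to another in $\Gamma$), each discrepancy is removed by a single commutation of two adjacent leaves, and no leaf ever has to cross an $a$ or a $b$. Hence $A$, and therefore $w^2$, is carried to $B$ by seven commutations.

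For the reducedness I would rewrite the designated subword $1b2\cdot 3a4\cdot 2b1\cdot 4a3\cdot 1b2\cdot 3a4$ using the commutativity inside each block: since $1,2,b$ pairwise commute and $3,4,a$ pairwise commute, each block $1b2$ or $2b1$ equals $b12$ and each block $3a4$ or $4a3$ equals $a34$. Thus the subword is commutation-equivalent to $(b12\cdot a34)^3$, that is, to $\bigl(\prod_{i\in I}s_i\prod_{j\in J}s_j\bigr)^3$ with $I=\{b,1,2\}$ and $J=\{a,3,4\}$, two sets of pairwise commuting generators partitioning $S$. Since $\tilde{D}_6$ is infinite and irreducible, Lemma~\ref{cor:speyer_sets} shows this word is reduced, and because commutation moves preserve both the group element and the word length, the original subword is reduced as well.

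The routine part is the bookkeeping of the eight braid moves and seven commutations; the one point that genuinely needs care, and which I would highlight, is the gap-by-gap comparison guaranteeing that after braiding the occurrences of $a$ and $b$ are already correctly placed, so that the remaining rearrangement uses commutations alone and never moves a leaf of type $1,2$ across an $a$ or a leaf of type $3,4$ across a $b$. Choosing the right partition $I,J$ for the second stage is then immediate once the $(b12\,a34)^3$ pattern is recognised.
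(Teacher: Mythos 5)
Your proof is correct and follows essentially the same route as the paper: apply the braid relations $a1a=1a1$, $b3b=3b3$, $a2a=2a2$, $b4b=4b4$ blockwise, rearrange the commuting leaf letters $1,2,3,4$ within the gaps, and then invoke Lemma~\ref{cor:speyer_sets} for the middle subword. You actually spell out two details the paper leaves implicit --- that the seven remaining moves are pure leaf commutations never crossing an $a$ or $b$, and that the subword is commutation-equivalent to $\bigl(\prod_{i\in I}s_i\prod_{j\in J}s_j\bigr)^3$ so reducedness transfers --- which is a welcome sharpening, not a deviation.
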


These word reductions are depicted in Figure \ref{fig:hexagons} and show a part of the Cayley graph of $\tilde{D}_6$.

\begin{proof} The word $w^2 = 1a1  \cdot 3b3 \cdot 2a2 \cdot 4b4 \cdot 1a1 \cdot 3b3 \cdot 2a2 \cdot 4b4$ can be written as
{\small \[1a  \cdot 3 1 b \cdot 2 3 \cdot  a  \cdot 4 2 \cdot  b \cdot 1 4 \cdot a \cdot 3 1 \cdot b \cdot 2 3 \cdot a4  \cdot 2 \cdot b4 \to
1a 3  \cdot 1b2 \cdot 3 a 4  \cdot 2b1 \cdot 4a3 \cdot 1b2 \cdot 3a4 \cdot 2b4.\]} 
The subword  $1b2 \cdot 3a4 \cdot 2b1 \cdot 4a3 \cdot 1b2 \cdot 3a4$ is now reduced by Corollary \ref{cor:speyer_sets}.
\end{proof}

\begin{figure}
\labellist
\pinlabel \tiny{$a$} at 5 70
\pinlabel \tiny{$1$} at 25 90
\pinlabel \tiny{$a$} at 55 70
\pinlabel \tiny{$b$} at 70 70
\pinlabel \tiny{$3$} at 100 90
\pinlabel \tiny{$b$} at 120 70
\pinlabel \tiny{$a$} at 140 70
\pinlabel \tiny{$2$} at 160 90
\pinlabel \tiny{$a$} at 180 70
\pinlabel \tiny{$b$} at 200 70
\pinlabel \tiny{$4$} at 230 90
\pinlabel \tiny{$b$} at 250 70
\pinlabel \tiny{$a$} at 270 70
\pinlabel \tiny{$1$} at 290 90
\pinlabel \tiny{$a$} at 310 70
\pinlabel \tiny{$b$} at 330 70
\pinlabel \tiny{$3$} at 350 90
\pinlabel \tiny{$b$} at 380 70
\pinlabel \tiny{$a$} at 400 70
\pinlabel \tiny{$2$} at 420 90
\pinlabel \tiny{$a$} at 440 70
\pinlabel \tiny{$b$} at 460 70
\pinlabel \tiny{$4$} at 480 90
\pinlabel \tiny{$b$} at 510 70
\pinlabel \tiny{$a$} at 530 70
\pinlabel \tiny{$1$} at 550 90
\pinlabel \tiny{$a$} at 570 70
\pinlabel \tiny{$b$} at 590 70
\pinlabel \tiny{$3$} at 610 90
\pinlabel \tiny{$b$} at 635 70
\pinlabel \tiny{$a$} at 655 70
\pinlabel \tiny{$2$} at 675 90
\pinlabel \tiny{$a$} at 695 70
\pinlabel \tiny{$b$} at 720 70
\pinlabel \tiny{$4$} at 745 90
\pinlabel \tiny{$b$} at 765 70

\pinlabel \tiny{$1$} at 5 40
\pinlabel \tiny{$a$} at 25 30
\pinlabel \tiny{$1$} at 55 40
\pinlabel \tiny{$3$} at 70 40
\pinlabel \tiny{$b$} at 100 30
\pinlabel \tiny{$3$} at 120 40
\pinlabel \tiny{$2$} at 140 40
\pinlabel \tiny{$a$} at 160 30
\pinlabel \tiny{$2$} at 180 40
\pinlabel \tiny{$4$} at 200 40
\pinlabel \tiny{$b$} at 230 30
\pinlabel \tiny{$4$} at 250 40
\pinlabel \tiny{$1$} at 270 40
\pinlabel \tiny{$a$} at 290 30
\pinlabel \tiny{$1$} at 310 40
\pinlabel \tiny{$3$} at 330 40
\pinlabel \tiny{$b$} at 350 30
\pinlabel \tiny{$3$} at 380 40
\pinlabel \tiny{$2$} at 400 40
\pinlabel \tiny{$a$} at 420 30
\pinlabel \tiny{$2$} at 440 40
\pinlabel \tiny{$4$} at 460 40
\pinlabel \tiny{$b$} at 480 30
\pinlabel \tiny{$4$} at 510 40
\pinlabel \tiny{$1$} at 530 40
\pinlabel \tiny{$a$} at 550 30
\pinlabel \tiny{$1$} at 570 40
\pinlabel \tiny{$3$} at 590 40
\pinlabel \tiny{$b$} at 610 30
\pinlabel \tiny{$3$} at 635 40
\pinlabel \tiny{$2$} at 655 40
\pinlabel \tiny{$a$} at 675 30
\pinlabel \tiny{$2$} at 700 40
\pinlabel \tiny{$4$} at 720 40
\pinlabel \tiny{$b$} at 745 30
\pinlabel \tiny{$4$} at 765 40

\pinlabel \tiny{$3$} at 55 10

\pinlabel \tiny{$1$} at 70 10

\pinlabel \tiny{$2$} at 120 10

\pinlabel \tiny{$3$} at 140 10

\pinlabel \tiny{$4$} at 180 10
\pinlabel \tiny{$2$} at 200 10

\pinlabel \tiny{$1$} at 250 10
\pinlabel \tiny{$4$} at 270 10

\pinlabel \tiny{$3$} at 310 10
\pinlabel \tiny{$1$} at 330 10

\pinlabel \tiny{$2$} at 380 10
\pinlabel \tiny{$3$} at 400 10

\pinlabel \tiny{$4$} at 440 10
\pinlabel \tiny{$2$} at 460 10

\pinlabel \tiny{$1$} at 510 10
\pinlabel \tiny{$4$} at 530 10

\pinlabel \tiny{$3$} at 570 10
\pinlabel \tiny{$1$} at 590 10

\pinlabel \tiny{$2$} at 635 10
\pinlabel \tiny{$3$} at 655 10

\pinlabel \tiny{$4$} at 700 10
\pinlabel \tiny{$2$} at 720 10
\endlabellist

\includegraphics[scale=0.4]{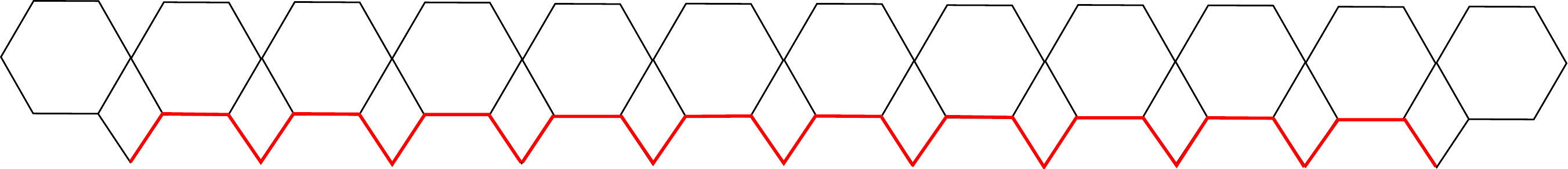}
\caption{\tiny{The transformations of $w^2$ to the word with reduced middle part $(1b23a4)^3$.}}\label{fig:hexagons}
\end{figure}

\begin{cor}
The word $w^n$ has form  \[w^n = 1a3 (1b23a4)^{2n-1} 2b4 \] and, hence, it has length at least $12n-12$. 
\end{cor}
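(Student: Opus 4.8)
The plan is to avoid tracking the reductions block by block (as in the preceding lemma) and instead isolate a single ``telescoping'' identity. Write $X=1a3$, $Y=1b23a4$ and $Z=2b4$, so the asserted normal form is exactly $XY^{2n-1}Z$. I would reduce everything to three claims: (A) $w=XYZ$; (B) the identity $ZXY=Y^2$; and (C) that $Y^{2n-1}$ is reduced. Granting (A) and (B), one computes in $\tilde D_6$
\[
w^n=(XYZ)^n=XY\,(ZXY)^{n-1}\,Z=XY\cdot Y^{2n-2}\cdot Z=XY^{2n-1}Z=1a3\,(1b23a4)^{2n-1}\,2b4,
\]
where the second equality is the purely formal regrouping of $n$ copies of $XYZ$ as $XY$, then $n-1$ copies of $ZXY$, then the trailing $Z$, and the third uses $(ZXY)^{n-1}=(Y^2)^{n-1}$ from (B). This settles the first assertion of the corollary.

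Claims (A) and (B) are verified by rewriting with the defining relations of $\tilde D_6$, and I expect no genuine difficulty. For (A) I would first apply the braid relations $s_\alpha s_\beta s_\alpha=s_\beta s_\alpha s_\beta$ to the four length-three blocks of $w=(a1a)(b3b)(a2a)(b4b)$, obtaining $(1a1)(3b3)(2a2)(4b4)$, and then slide commuting generators past one another (recalling that in $\tilde D_6$ only the pairs $(1,a),(2,a),(3,b),(4,b),(a,b)$ fail to commute) to reach $1a3\,1b23a4\,2b4$. For (B), i.e. $2b4\cdot1a3\cdot1b23a4=(1b23a4)^2$, only commutations are needed: move the first $1$ to the front past $4,b,2$, swap the adjacent $2,b$, and replace $4a3$ by $3a4$ (these three generators commute pairwise). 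The bookkeeping is mechanical because the relative order of every non-commuting pair of generators is preserved throughout.

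The substantive input is (C) together with the length estimate, and here Lemma~\ref{cor:speyer_sets} does the work. Taking $I=\{a,3,4\}$ and $J=\{b,1,2\}$ — each pairwise commuting in the diagram — and observing that, up to reordering commuting generators inside each block, $Y=1b23a4=\bigl(\prod_{j\in J}s_j\bigr)\bigl(\prod_{i\in I}s_i\bigr)$, the lemma (whose hypothesis is symmetric in $I$ and $J$) gives that $Y^{m}$ is reduced for all $m$; hence $l(Y^{2n-1})=6(2n-1)=12n-6$. Finally, since $Y^{2n-1}=X^{-1}w^nZ^{-1}$ and the length function is subadditive and inversion-invariant,
\[
12n-6=l(Y^{2n-1})\le l(X)+l(w^n)+l(Z)\le 3+l(w^n)+3,
\]
whence $l(w^n)\ge 12n-12$. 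The main obstacle is conceptual rather than computational: one must spot the three-block decomposition $w=XYZ$ and the collapsing identity $ZXY=Y^2$, and then recognise that the lower bound has to be extracted from the \emph{reduced} middle block $Y^{2n-1}$ — which falls $6$ short of $12n$ — rather than from the full word, the two stripped end-blocks $X,Z$ accounting for the remaining $6$ and giving precisely $12n-12$.
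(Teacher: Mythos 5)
Your proposal is correct, and its second half is the same as the paper's: both arguments rest on Lemma~\ref{cor:speyer_sets} (with $I=\{a,3,4\}$, $J=\{b,1,2\}$) to get that the middle block $(1b23a4)^{2n-1}$ is reduced of length $12n-6$, and both then account for the two $3$-letter end blocks to obtain $l(w^n)\ge 12n-12$ (the paper phrases this as "cancelling at most $12$ letters", you phrase it as $l(Y^{2n-1})\le l(X)+l(w^n)+l(Z)$; these are the same estimate). Where you genuinely differ is in how the normal form $w^n=1a3\,(1b23a4)^{2n-1}\,2b4$ is established. The paper starts from its $w^2$-lemma, explicitly reduces $w^2\cdot w$ to $1a3\,(1b23a4)^5\,2b4$, and then asserts the general pattern with "in general, we see that $w^n$ reduces to\ldots", leaving the inductive step implicit. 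You instead isolate the single commutation identity $ZX=Y$ (equivalently $ZXY=Y^2$), after which $w^n=(XYZ)^n=XY(ZXY)^{n-1}Z=XY^{2n-1}Z$ is a purely formal telescoping valid for all $n$ simultaneously, with no induction on explicit reductions needed; I checked claims (A) and (B) and the commutations you list are exactly the right ones. This is shorter and, strictly speaking, more complete, since it closes the paper's "in general" gap. What the paper's hands-on computation buys in return is the concrete picture of the successive reductions in the Cayley graph (Figure~\ref{fig:hexagons}), which is reused later in the proofs of Theorem~\ref{thm:ulg} and Theorem~\ref{thm:threegeodesics}; your identity $ZX=Y$ is precisely the mechanism driving those pictures, made algebraic.
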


\begin{proof} Note that $w^2\cdot w$ reduces to
{\small \begin{align*}
& 1a 3 \cdot 1b2 \cdot 3 a 4 \cdot 2b1 \cdot 4a3 \cdot 1b2 \cdot 3a4 \cdot  2b4 \cdot a1ab3ba2ab4b \to \\
& 1a3 \cdot 1b23a42b14a31b23a4 \cdot 2b4 \cdot 1a13b32a24b4\to \\
& 1a3 \cdot 1b23a42b14a31b23a4 \cdot 2b1 \cdot 4a3\cdot 1b2\cdot 3a4 \cdot 2b4\to 1a3 \cdot (1b23a4)^5 2b4.
\end{align*}}
In general, we see that $w^n$ reduces to
{\small \begin{align*}
& 1a 3 \cdot 1b2 \cdot 3 a 4 \cdot 1b2 \cdot 3a4 \cdot 1b2 \cdot 3a4 \cdot  2b4  \to 1a3 (1b23a4)^{2n-1} 2b4.
\end{align*}}
The estimate of the length comes from having $6(2n-1) + 6$ letters. The part of length $6(2n-1)$ is reduced and we allow for cancellation of the prefix and suffix, with $3$ letters each, cancelling at most $12$ letters. 
\end{proof}

\begin{prop}
The word $w^n = (a1ab3ba2ab4b)^n$ is reduced for all $n \geq 2$.
\end{prop}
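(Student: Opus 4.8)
The plan is to build on the preceding Corollary, which already identifies $w^n$ with the element $W_n:=1a3\,(1b23a4)^{2n-1}\,2b4$, and to sharpen its length estimate from $12n-12$ to the exact value $12n$. Abbreviate by $s_I=3a4$ and $s_J=1b2$ the products over the pairwise-commuting sets $I=\{a,3,4\}$ and $J=\{b,1,2\}$, so that the repeating block is $v:=1b23a4=s_Js_I$. Because $\tilde D_6$ is infinite and irreducible, Lemma~\ref{cor:speyer_sets} gives that $(s_Is_J)^m$ is reduced; since $(s_Is_J)^{m+1}=s_I\,(s_Js_I)^m\,s_J$ exhibits $(s_Js_I)^m=v^m$ as a contiguous factor of a reduced word, and factors of reduced words are reduced, we obtain $l(v^m)=6m$ for every $m\ge1$.

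The core of the proof is a pair of boundary identities, each obtained purely by commuting letters within $I$ and within $J$. First I would verify $b24\cdot1a3=v$: in $b\,2\,4\,1\,a\,3$ the letter $4$ commutes with $1$, $a$ and $3$, so moving it to the right yields $b\,2\,1\,a\,3\,4$, whose first three letters lie in $J$ and last three in $I$, i.e.\ $s_Js_I=v$. Symmetrically $2b4\cdot13a=v$, since in $2\,b\,4\,1\,3\,a$ the letter $4$ commutes with $1$ and $3$, giving $2\,b\,1\,3\,4\,a=s_Js_I$. Combining these at the two ends of $W_n$ gives
\[
b24\cdot W_n\cdot13a=(b24\cdot1a3)\,v^{2n-1}\,(2b4\cdot13a)=v\cdot v^{2n-1}\cdot v=v^{2n+1}.
\]

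From here the conclusion is immediate. By the first paragraph $v^{2n+1}$ is reduced of length $6(2n+1)=12n+6$, while $W_n=(b24)^{-1}v^{2n+1}(13a)^{-1}$ is obtained from it by multiplying by six generators in total; as multiplication by a single generator changes the length by exactly $\pm1$, this can lower the length by at most six. Hence $l(W_n)\ge(12n+6)-6=12n$, and since $W_n$ is visibly a word of length $12n$ we conclude $l(W_n)=12n$, i.e.\ $W_n$ is reduced. As $w^n=W_n$ as group elements and $w^n=(a1ab3ba2ab4b)^n$ is itself a word of length $12n$, the word $w^n$ is reduced for all $n\ge2$.

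The only point needing real care is the reducedness of the block powers $v^m$: Lemma~\ref{cor:speyer_sets} is phrased for $(s_Is_J)^m$, so one must pass to the $J$-first word $v^m=(s_Js_I)^m$, which is handled by the factor observation above. Everything else is bookkeeping with the commutation relations read directly off the Coxeter--Dynkin diagram.
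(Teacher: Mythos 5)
Your proof is correct, and it takes a genuinely different route from the paper's. The paper argues by contradiction with an asymptotic counting argument: assuming some power $w^p$ is not reduced, it takes $n\ge 13p$, concatenates the hypothetical shortcuts across the $n/p$ blocks of $w^n$, and derives a path between the endpoints $s_-,s_+$ of the middle part of length at most $12n-7$, contradicting the fact (via Lemma~\ref{cor:speyer_sets}) that $(1b23a4)^{2n-1}$ is reduced of length $12n-6$. You instead compute the length of $w^n$ exactly: starting from the same identification $w^n=1a3\,(1b23a4)^{2n-1}\,2b4$ of the preceding Corollary, you complete this element on both sides to $v^{2n+1}$ via the two commutation identities $b24\cdot 1a3=v$ and $2b4\cdot 13a=v$ (both of which I checked: the letter $4$ commutes with $1,a,3$, and the triples $\{b,2,1\}$, $\{a,3,4\}$ lie in $J$, $I$ respectively), and then peel off the six padding generators using the fact that each generator multiplication changes length by exactly $\pm1$, giving $l(w^n)\ge 12n$ and hence equality. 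Both proofs rest on the same pillar — Speyer's lemma guaranteeing $l(v^m)=6m$, where your reduction of $(s_Js_I)^m$ to a contiguous factor of $(s_Is_J)^{m+1}$ is a clean way to handle the order of the two commuting sets — but your argument is more direct, avoids the limiting process over large $n$, works uniformly for all $n\ge 1$, and as a by-product sharpens the Corollary's estimate $l(w^n)\ge 12n-12$ to the exact value $l(w^n)=12n$, which is precisely the reducedness assertion.
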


\begin{proof}
We assume it is not, then there is a power $p$ of $w$ such that $w^p$ is not reduced. The word $w^p$ has $12p$ letters. Denote by $w^p_-$ and $w^p_+$ the beginning and end of the path in the Cayley graph labelled by $w^p$. We assume first, there is a shorter connection between $w^p_-$ and $w^p_+$ in the Cayley graph with at most $12p-1$ letters.  Now assume $n\geq 13\cdot p$ and denote by $w^n_-$ and $w^n_+$ the beginning and end of the path in the Cayley graph labelled by $w^n$. This is depicted in Figure \eqref{fig:w13}.

\begin{figure}
\labellist
\pinlabel {\tiny{length $12n-6$}} at 150 -5
\pinlabel \tiny{$12p-1$} at 27 35
\pinlabel \tiny{\textcolor{red}{$12p-6$}} at 40 15
\pinlabel $\dots \dots$ at 250 20
\endlabellist
\includegraphics[scale=0.8]{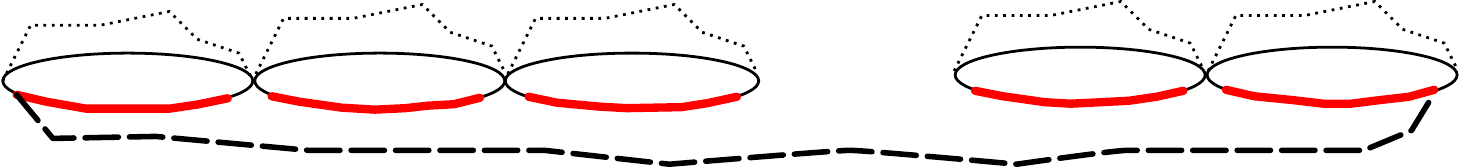}
\caption{\tiny{The word $w^{n}$, each block represents $w^p$ as in Figure \ref{fig:hexagons}} where the red line represents the reduced part.}\label{fig:w13}
\end{figure}

There are $n/p$ copies of $w^p$ connecting $w^n_-$ with $w^n_+$. Denote by $s_-$ and $s_+$ the two vertices connecting the reduced middle part $(1b23a4)^{2n-1}$ of length $12n-6$ of $w^n$. Because each part $w^p$ of $w^n$ is connected by a path of length $12p-1$, we get a connection from $s_-$ to $s_+$ of length at most 
{\small \[\tfrac{n}{p} \cdot (12p-1) + 6 = 12n - \tfrac{n}{p} + 6 \leq 12n-13+6 = 12n-7.\]} However, by Corollary~\ref{cor:speyer_sets} the subword $(1b23a4)^{2n-1}$ between $s_-$ and $s_+$ has length $12n-6$.
\end{proof}

\begin{thm}\label{thm:ulg}
The word $w^n=(a1ab3ba2ab4b)^n$ is a u.l.g.
\end{thm}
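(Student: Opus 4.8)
The plan is to verify that the element $g=w^{n}$ admits exactly one reduced expression with the label $L$ of $w^{n}$, where $L$ assigns multiplicity $4n$ to each of $a,b$ and multiplicity $n$ to each leaf $1,2,3,4$. By the preceding Proposition $w^{n}$ is reduced, so it is a geodesic with label $L$; note that $g$ also has the reduced expression $1a3\,(1b23a4)^{2n-1}\,2b4$ of the preceding Corollary, whose label assigns $2n$ to every generator, so $g$ genuinely possesses reduced expressions of different labels and the uniqueness of $L$ is a nontrivial assertion. The first, easy step is to read $w^{n}$ as a walk on $\Gamma$: inside the period $a1ab3ba2ab4b$ and across the seam $b\mid a$ every two consecutive letters are adjacent vertices of $\Gamma$, so no commutation relation applies to $w^{n}$. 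Hence the commutation ($C_{1}$) class of $w^{n}$ is the singleton $\{w^{n}\}$, and since the label is constant on $C_{1}$-classes, $w^{n}$ is already the only geodesic with label $L$ inside its own $C_{1}$-class.

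The substance of the argument is to exclude reduced expressions of $g$ lying in other $C_{1}$-classes while still carrying label $L$. By the theorem of Matsumoto and Tits any reduced expression $u$ of $g$ is obtained from $w^{n}$ by braid and commutation moves; commutations preserve the label, whereas a braid move $s_\alpha s_\beta s_\alpha\leftrightarrow s_\beta s_\alpha s_\beta$ changes it by $\pm(e_\beta-e_\alpha)$, so a word of label $L$ can be reached only along a sequence whose braid moves cancel. I would turn this into the combinatorics of walks on $\Gamma$, exploiting that $1,2$ commute with everything except $a$ and $3,4$ with everything except $b$. This yields strong local constraints on any reduced $u$: two equal leaf letters must be separated by their unique neighbour, and two consecutive occurrences of $a$ (resp.\ $b$) must be separated by a neighbour of $a$ (resp.\ $b$). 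The aim is to combine these separation conditions with the exact multiplicities recorded in $L$ to force $u$ to be a walk on $\Gamma$ whose backbone alternates $a,b$ and which dips out to the leaves by single excursions in the cyclic order $1,3,2,4$.

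To finish, I would invoke Lemma~\ref{lem:uglw} to rule out every turning pattern other than the periodic one, so that the decorated backbone is forced to equal $w^{n}$. Here it is worth checking directly that $w^{n}$ contains no forbidden subword of Lemma~\ref{lem:uglw}: its turning vertices are the leaves $1,3,2,4$ repeated, each at distance $1$ from a branching vertex (all turns are short in the sense of Definition~\ref{def:finbr}), and the alternation between leaves of $a$ and leaves of $b$ prevents any configuration $s_\alpha s_\beta\,v\,s_\beta s_\alpha s_\beta$ from appearing. Together with the previous step this identifies $u=w^{n}$ and proves that $w^{n}$ is a \ulg.

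The hard part will be the tightening in the middle step. The crude separation counts leave slack: there are $6n$ candidate separators (the letters $1,2,b$, respectively $a,3,4$) for only $4n-1$ internal gaps on each side, so the multiplicities of $L$ do not by themselves pin down the word. Overcoming this will require using the $a$-side and $b$-side constraints simultaneously and, I expect, exploiting the reduced core $(1b23a4)^{2n-1}$ supplied by Lemma~\ref{cor:speyer_sets}; equivalently, one tracks the finitely many local braid and commutation moves displayed in Figure~\ref{fig:hexagons} to confirm that, starting from $w^{n}$, no nontrivial sequence of moves returns to label $L$. Making this local analysis uniform in $n$ by periodicity is, I believe, the technical heart of the proof.
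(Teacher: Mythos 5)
Your framing of the problem is correct: since $w^n$ is reduced by the preceding Proposition, the content of the theorem is that no other reduced expression of the element $g=w^n$ carries the label $L$, and your observations that the $C_1$-class of $w^n$ is a singleton and that a braid move shifts the label by $\pm(e_\beta-e_\alpha)$ are both valid starting points. But the proposal stops exactly where the proof has to begin. The step you defer --- combining the separation conditions with the multiplicities of $L$ to force the periodic walk, ``uniform in $n$ by periodicity'' --- is the entire theorem, and you acknowledge that your counting scheme leaves slack you do not know how to close. The paper closes it not by a uniform local argument but by brute force: it enumerates every connected-path continuation compatible with the constraints (the roughly seventy cases of Appendix~\ref{app:list}, organized in four trees) and kills each one individually, either because it contains $xyxy$ (not reduced), or a pattern $xyx\cdots xyx$ with no intervening $x,y$ (not reduced), or a subword $xy\cdots yxy$ of Lemma~\ref{lem:uglw} (not a u.l.g.), or --- crucially --- by an explicit multi-step braid computation specific to that case.

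That last category is where your plan would actually fail rather than merely remain incomplete. You expect Lemma~\ref{lem:uglw} ``to rule out every turning pattern other than the periodic one.'' It does not: the words in cases \eqref{cases:25}, \eqref{cases:29}, \eqref{cases:30}, \eqref{cases:35}, \eqref{cases:38}, \eqref{cases:54}, \eqref{cases:61}, \eqref{cases:62} contain neither $xyxy$ nor any forbidden subword of Lemma~\ref{lem:uglw}, yet must still be excluded; the paper disposes of each by a bespoke chain of braid relations showing it is not reduced (or, for case \eqref{cases:29}, that it admits a second expression with the same label, so it is not a u.l.g.). In addition, the paper must separately treat the diagram automorphisms $3\leftrightarrow 4$ and $1\leftrightarrow 2$, because $w_2=a1ab4ba2ab3b$ and $w_3=a2ab3ba1ab4b$ carry exactly the same label as $w$; one has to check that no reduced expression of $g$ itself realizes these patterns, which the paper does by transforming $w^2$ and comparing letter counts. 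Your proposal never confronts this, and nothing in a pure multiplicity-and-separation argument can detect it, since the multiplicities are identical. So the gap is genuine: the skeleton is right, but the case analysis that constitutes the proof is missing, and the one tool you name for carrying it out is demonstrably insufficient.
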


\begin{proof}
We observe that asymptotically there are each one third $a$'s, one third $b$'s and $\frac{1}{12}th$ every number $1,2,3$ and $4$. We first show that the only way to have a \ulg with this distribution is a periodic word. Assume the infinite word is not periodic. Then the maximal distance between two occurances of one of the letters $1,2,3,4$ must be bigger than $12$, because it is $12$ on average. Assume without loss of generality that it is the letter $1$ which has two occurances in the infinite word of distance more than $12$. Because of the symmetries of the Coxeter tree defining $\tilde{D}_6$ the same arguments work for $2,3$ and $4$. This can only happen in one of the following cases. (Recall that a \ulg has to be a connected path in the graph.)

We go through all other possiblities which do not yield the word $(a1ab3ba2ab4b)^n$. Most of them either contain either $xyxy$, $xyx \dots xyx$ or $xy \dots yxy$ with no $x$ in the dots. The first two are not reduced words, the latter not a u.l.g. A complete list of cases can be found in the Appendix~\ref{app:list}. We discuss only the cases which are not immediate:

{\small 
\begin{multicols}{2}
\begin{enumerate}
 \item[\eqref{cases:5}] $a1a2ab4b$ 
 \item[\eqref{cases:19}] $a1ab4ba2ab3ba1$ 
 \item[\eqref{cases:25}] $a1ab4b3ba2ab4ba1a$ 
 \item[\eqref{cases:29}] $a1ab4b3bab4b3b$
 \item[\eqref{cases:30}] $a1ab4ba2ab3b4bab3b$ 
 \item[\eqref{cases:33}] $a1ab4ba2ab3b4ba1a$
 \item[\eqref{cases:35}] $a1ab4ba2ab3b4ba1ab3b$
 \item[\eqref{cases:38}] $a1ab4ba2ab3b4ba1a2ab3b$
 \item[\eqref{cases:54}] $a1ab4ba2ab3b4ba1a2aba1a2a$
 \item[\eqref{cases:61}] $a1ab4b3ba2ab4b3ba1a2ab4b$
 \item[\eqref{cases:62}] $a1ab4b3ba2ab4b3ba1a2aba1a$
 \item[\eqref{cases:64}] $a1ab4b3ba2ab4b3ba1a2abab$
 \item[\eqref{cases:70}] $a1ab4b3ba2ab4b3ba1ab4b$ 
\end{enumerate}
\end{multicols}}

\eqref{cases:5} Isomorphic to \eqref{cases:6} and follows because of the isomorphism $3 \leftrightarrow 4$.
 
\eqref{cases:19} $a1ab4ba2ab3ba1$ in this case the letter $1$ occurs with distance $12$.

\eqref{cases:25} $a1ab4b3ba2ab4ba1a $ this is the complex case: We easily verify that neither $b4b$, $b3b$ nor $ab$ can be a prefix. Hence the only case to check is $a2a1a\dots$:
 
 This can only occur after $aba2a1a\dots$, which can only occur after either \[a1 \cdot aba2a1a\dots \mbox{ or } a2 \cdot aba2a1a \dots.\] The word $a2aba2a1a = 2a2 \cdot ba2a1a = 2ab \cdot 2a2a \cdot 1a$ is not reduced. Hence we need to check $a1aba2a\cdot a1a\dots$:
 {\small \begin{align*} 
 a1aba2a1ab4b3ba2ab4ba1a = \; & a1ab\cdot 2a2 \cdot 1ab4b3ba2ab4ba1a \\ a1ab\cdot 2a1 \cdot 2 a \cdot b4b3ba2ab4ba1a = \; &  a1ab\cdot 2a1 \cdot a2a2 \cdot b4b3ba2ab4ba1a \\ a1ab2a1 a2a b4b3b\cdot 2a2a \cdot b4ba1a = \; &  1a1 \cdot b2 a1 a2a b4b3b a2 b4ba1a \\ 1a \cdot b2 \cdot 1a1 a \cdot 2a b4b3b a2 b4ba1a = \; &  1ab2 \cdot a1 \cdot 2a b4b3b a2 b4ba1a.
 \end{align*}}
 The first word has length $23$ the last word has length $21$, hence it is not reduced and cannot occur as part of a u.l.g.
 
\eqref{cases:29} {\small \begin{align*} a1ab4b3bab4b3b = a1ab4b3\cdot aba \cdot 4b3b = \; &  a1a\cdot 4b4 \cdot 3 aba 4b3b \\
 a1a 4b 3 a \cdot 4 b4 \cdot a b3b = \; & a1a 4b 3 a \cdot b4b \cdot a b3b \\
 a14a b a3 \cdot b4b \cdot a b3b = \; & a14\cdot bab \cdot 3 b4b a b3b.\end{align*}} Both the first and last term have $3$ times $a$, six times $b$, once $1$ and twice each $3$ and $4$. Hence it cannot be part of a u.l.g.
 
\eqref{cases:30} This word is not reduced: {\small \begin{align*} a1ab4ba2ab3b4bab3b = \; & a1ab4ba2a \cdot 3b3 \cdot 4bab3b \\ a1ab4ba2a \cdot 3b \cdot 4 \cdot 3b \cdot ab3b =\; &   a1ab4ba2a \cdot b3b3 \cdot 4 \cdot 3b \cdot ab3b \\ a1ab4ba2a \cdot 3b4 \cdot b3b3 \cdot ab3b = \; & 
 a1ab4ba2a \cdot 3b4 \cdot b3b \cdot a \cdot 3b3b \\ a1ab4ba2a \cdot 3b4b3b \cdot a \cdot b3 = \; & 
 a1ab4ba2a 3 \cdot 4b4 \cdot 3bab3 \\ = \; & a1a\cdot b4b4 \cdot a2a 3  b4 3bab3. \end{align*}}
 
\eqref{cases:33} This is the inverse of \ref{cases:25}. Hence it cannot be succeeded by anything valid.

\eqref{cases:35} This word is not reduced: {\small \begin{align*} a1ab4ba2ab3b4ba1ab3b = \; & a1ab4ba2a \cdot 3b3 \cdot 4ba1ab3b \\ a1ab4ba2a3b \cdot 4 \cdot b3b3 \cdot a1ab3b = \; & 
 a1ab4ba2a3b 4 \cdot b3b \cdot a1a\cdot 3b3b \\ a1ab4ba2a3 \cdot 4b4 \cdot 3b a1a\cdot b3 = \; & 
 a1ab4b4 \cdot a2a3 \cdot b4 \cdot 3b a1ab3 \\ = \; & a1a\cdot 4b \cdot a2a3 b4 3b a1ab3.\end{align*} }
 
\eqref{cases:38} This word is not reduced: {\small \begin{align*} a1ab4ba2ab3b4ba1a2ab3b = \; & a1ab4ba2a\cdot 3b3 \cdot 4ba1a2ab3b \\ a1ab4ba2a\cdot 3b4 \cdot b3b3 \cdot a1a2ab3b = \; & a1ab4ba2a\cdot 3 \cdot 4b4 \cdot 3b a1a2a \cdot b3 \\ a1a \cdot b4b4 \cdot a2a 3 4b4 3b a1a2a b3 = \; &  a1a \cdot 4b \cdot a2a 3 4b4 3b a1a2a b3.\end{align*}}

\eqref{cases:54} We study the end of the sequence $a2ab3b4ba1a2aba1a2a$ and show it is not reduced:
 {\small \begin{align*} a2ab3b4ba1a2aba1a2a = \; & a2ab3b4ba1 \cdot 2a2 \cdot ba1a2a \\
 a2ab3b4ba \cdot 21 \cdot a2ba1a2a = \; & a2ab3b4b \cdot 2a2a \cdot 1a2b \cdot 1a1 \cdot 2a \\
 a2a \cdot 2 \cdot b3b4ba2 \cdot 1a1 \cdot 2b \cdot 1a1 \cdot 2a = \; & 2a \cdot b3b4ba2 \cdot 1a \cdot 2b \cdot a1 \cdot 2a.\end{align*}}
 
\eqref{cases:61} This word is not reduced: {\small \begin{align*} a1ab4b3ba2ab4b3ba1a2ab4b = \; & a1ab4b3ba2ab4\cdot 3b3 \cdot a1a2ab4b \\
 a1ab4b3ba2a\cdot 3b3b \cdot 4 b3 a1a2ab4b = \; & a1ab4\cdot 3b \cdot a2a b3 \cdot 4b4 \cdot 3 a1a2ab4b \\ 
 a1ab43b a2a b3 4b \cdot 3 a1a2a \cdot 4b4b.\end{align*}}
 
\eqref{cases:62} This word is not reduced: {\small \begin{align*} a1ab4b3ba2ab4b3ba1a2aba1a = \; & a1ab4b3ba2ab4b3ba1 \cdot 2a2 \cdot ba1a \\ a1ab4b3ba2ab4b3b \cdot 2a2a \cdot 1 \cdot a2 \cdot ba1a = \; & 
a1ab4b3b\cdot a2a2 \cdot b4b3b a2a 1 a2 ba1a \\  a1ab4b3b\cdot 2a \cdot b4b3b a2 \cdot 1a1 \cdot 2 ba1a = \; &  a1ab4b3b2ab4b3b a2 1a 2 b \cdot 1a1a. \end{align*}}

\eqref{cases:64} The same as in  \ref{cases:61}, since the extra $a1a2$ in \eqref{cases:61} has no effect on the technique.

\eqref{cases:70} $a1ab4b3ba2ab4b3ba1ab4b$ is not reduced by the same argument as in \eqref{cases:61} as well.

It is left to show that the above word does not transform into the one under the isomorphisms $3 \mapsto 4$ or $1 \mapsto 2$. 

Case 1: We look at $3 \mapsto 4$ first: 
{\small \begin{align*} 
w^2 = \; &  a1ab3ba2ab4b a1ab3ba2ab4b =  1a1 \cdot 3b3 \cdot 2a2 \cdot 4b4 \cdot 1a1 \cdot 3b3 \cdot 2a2 \cdot 4b4 \\
= \; & 1a 3 1b2 3 a 4 2 b1 4a3 1b2 3a 42b4 =  1a 3 2b1 3a4 1b2 3a4  2b1 3a4 2b4 \\ 
= \; & 1a32b 31 a 14 b42a23 b31a24b4 =  1a23b3 a1a b4b a2a b3b1a24b4.\end{align*}}
Even though the last transformation of $w^2$ contains the word $w_2 = a1ab4ba2ab3b$, the letter $1$ occurs more often than in $w^2$. Further, the last line is not a \ulg which implies that $w^n$ is a u.l.g. 

Case 2: We now check the isomorphism $1 \mapsto 2$.
{\small \begin{align*} 
w^2 =& \; a1ab3ba2ab4b \cdot a1ab3ba2ab4b =  1a1 \cdot 3b3 \cdot 2a2 \cdot 4b4 \cdot 1a1 \cdot 3b3 \cdot 2a2 \cdot 4b4 \\ 
=& \;  1a 3 1b23 a 42b14a31b23a4 2b4 =   1a 3 \cdot 2b1 \cdot 3a4 \cdot 1b2 \cdot 3a4  \cdot 2b1 \cdot 3a4 2b4 \\ 
=& \; 1a 3  \cdot b41 \cdot 2a2 \cdot 3b3 \cdot 1a1  \cdot 4b4 \cdot 2a2 \cdot 3b4 =   1a 3  b41 \cdot a2a b3b a1a  b4b \cdot 2a2 3b4. \qedhere \end{align*}} 
\end{proof}

\begin{cor}
The words $w_2^n = (a1ab4ba2ab3b)^n$ and $w_3^n =(a2ab3ba1ab4b)^n$ are u.l.g.'s for all $n\ge 1$.
\end{cor}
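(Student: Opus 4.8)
The plan is to exploit the symmetry of the Coxeter–Dynkin diagram of $\tilde{D}_6$ rather than to rerun the entire argument of Theorem~\ref{thm:ulg}. The diagram has an obvious automorphism group: the two ``legs'' $\{1,2\}$ attached to $a$ can be swapped (the transposition $1\leftrightarrow 2$), and independently the two legs $\{3,4\}$ attached to $b$ can be swapped (the transposition $3\leftrightarrow 4$), while the edge $a$–$b$ is fixed. Each such diagram automorphism induces an automorphism of the Coxeter group $W$ permuting the generators, hence an automorphism of the Cayley graph that preserves lengths and maps geodesics to geodesics. Crucially, it preserves the property of being a u.l.g.: if $\phi$ is induced by a permutation $\sigma$ of the generators, then $\phi$ sends a reduced word $\rho(w)$ to the reduced word obtained by applying $\sigma$ letterwise, and $\phi$ carries the total label $\vec{\rho}(w)$ to its $\sigma$-permutation, so the number of geodesics with a given label is invariant. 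Thus any word in the $\phi$-orbit of a u.l.g.\ is again a u.l.g.

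First I would identify the relevant automorphisms applied to $w=a1ab3ba2ab4b$. Applying the transposition $3\leftrightarrow 4$ sends $w$ to $a1ab4ba2ab3b=w_2$, and applying the transposition $1\leftrightarrow 2$ (together with $3\leftrightarrow 4$) sends $w$ to $a2ab3ba1ab4b=w_3$. Since $w^n=(a1ab3ba2ab4b)^n$ is a u.l.g.\ for all $n\geq 1$ by Theorem~\ref{thm:ulg}, applying the diagram automorphism $\phi_{34}$ induced by $3\leftrightarrow 4$ to each factor shows that $\phi_{34}(w^n)=w_2^n$ is a u.l.g., and similarly the composite automorphism gives that $w_3^n$ is a u.l.g.

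The only point requiring care is the claim that a diagram automorphism genuinely descends to a well-defined group automorphism preserving u.l.g.'s. This follows because a permutation $\sigma$ of the index set that preserves adjacency in $\Gamma$ preserves every Coxeter relation $s_\alpha^2=1$ and $(s_\alpha s_\beta)^{m_{\alpha\beta}}=1$, so it extends to an automorphism $\phi$ of $W$; this $\phi$ is a graph automorphism of the Cayley graph $C$ that multiplies each edge label by $\sigma$, hence sends a geodesic with label $(i_1,\ldots,i_n)$ to a geodesic with the permuted label $(i_{\sigma^{-1}(1)},\ldots,i_{\sigma^{-1}(n)})$ and induces a length-preserving bijection on the set of all geodesics realizing a fixed label. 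Consequently $a_{\imath}=a_{\sigma(\imath)}$, and a word is a u.l.g.\ if and only if its image under $\phi$ is.

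The main obstacle I anticipate is essentially bookkeeping rather than conceptual: one must verify that the specific transpositions $1\leftrightarrow 2$ and $3\leftrightarrow 4$ really are adjacency-preserving automorphisms of the pictured tree (they are, since $1$ and $2$ are the two valency-one neighbours of $a$, and $3$ and $4$ are the two valency-one neighbours of $b$), and that applying them factorwise to the infinite periodic word $w^n$ yields exactly the words $w_2^n$ and $w_3^n$ as written. Once the automorphism argument is in place, no further word reductions of the kind carried out in the proof of Theorem~\ref{thm:ulg} are needed; the corollary is immediate from the invariance of the u.l.g.\ property under the symmetry group of $\Gamma$.
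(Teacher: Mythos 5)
Your symmetry argument is precisely the paper's (implicit) justification: the corollary is stated there without proof, as an immediate consequence of Theorem~\ref{thm:ulg} under the diagram automorphisms of $\tilde{D}_6$, which the proof of that theorem already invokes as the isomorphisms $3\mapsto 4$ and $1\mapsto 2$. One bookkeeping correction: $w_3=a2ab3ba1ab4b$ is the image of $w$ under the transposition $1\leftrightarrow 2$ alone, not under the composite of both transpositions (the composite sends $w$ to $a2ab4ba1ab3b$, which is neither $w_2$ nor $w_3$); with that fix your argument goes through verbatim.
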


We finish with an observation regarding the two bi-infinite geodesic rays of $w^n$ and $w_2^n$ in the Cayley graph of $W$. We say two geodesics $\gamma_1$ and $\gamma_2$ are \emph{fellow travelling} if there exists a constant $D$ such that every point on $\gamma_2$ is at distance at most $D$ from a point on $\gamma_1$. 

\begin{thm}\label{thm:threegeodesics}
The bi-infinite u.l.g.'s with labels 
\begin{enumerate}
\item $w^n=(a1ab3ba2ab4b)^n$ 
\item  $w_2^{n}=(a1ab4ba2ab3b)^n$ 
\item and $w_3^n=(a2ab3ba1ab4b)^n$ 
\end{enumerate} 
are fellow travelling at distance at most $5$ but at least $2$ from each other on their entire lengths. 
\end{thm}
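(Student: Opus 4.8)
The plan is to exploit the symmetry relating the three labels. Writing $w=a1ab3ba2ab4b$, the words $w_2$ and $w_3$ are the images of $w$ under the diagram automorphisms $\phi_{34}$ (interchanging the two leaves $3,4$ attached to $b$) and $\phi_{12}$ (interchanging the leaves $1,2$ attached to $a$). Since these preserve the Coxeter matrix, they act as label-respecting isometries of the Cayley graph, so $\gamma_2=\phi_{34}(\gamma_1)$, $\gamma_3=\phi_{12}(\gamma_1)$, and $\gamma_3=(\phi_{12}\phi_{34})(\gamma_2)$. Thus I only have to understand the relative position of $\gamma_1$ and its images. First I would record two structural facts. (i) Each $w_i^n$ is reduced (the preceding Proposition), so every prefix is reduced and the bi-infinite geodesics are arc-length parametrised: $l(\gamma_i(k))=|k|$. (ii) The three share a common translation direction: per period each accumulates the same multiset of generators $a^4b^4\,1\,2\,3\,4$, and the straightened cores coming from the preceding Corollary, namely $(1b23a4)^\infty$, $(1b24a3)^\infty$, $(2b13a4)^\infty$, differ within one period only by the commuting transposition $34$ (resp.\ $12$); indeed one checks $((1b23a4)^k)^{-1}(1b24a3)^k$ is periodic with values among $e$ and $34$. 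This is what forces the three geodesics to be pairwise parallel, hence fellow travelling, and makes the bounds finite.

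Both bounds are invariant under simultaneous translation by one period along the common direction, so the problem reduces to a finite computation over one fundamental domain (twelve representative vertices on each geodesic). For the upper bound I would, for each representative $P$ on $\gamma_i$, exhibit an explicit $Q$ on $\gamma_j$ at the matching stage of the common core and reduce $P^{-1}Q$ using only $[3,4]=[1,2]=1$, the commutations of $1,2$ with $b,3,4$ and of $3,4$ with $a,1,2$, and the braid relations along the edges $a\text{-}1,\,a\text{-}2,\,a\text{-}b,\,b\text{-}3,\,b\text{-}4$. The matching is \emph{not} at equal arc length (there same-index distances grow linearly), but shifted; for instance $\gamma_1(6)^{-1}\gamma_2(4)=(a1ab3b)^{-1}(a1ab)=b3$ has length $2$. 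A systematic such matching keeps every $P^{-1}Q$ of length at most $5$. The point distinguishing the cases is that the single-swap pairs $(\gamma_1,\gamma_2)$ and $(\gamma_1,\gamma_3)$ carry only one transposed commuting pair per period and realise the overall minimum $2$, while the pair $(\gamma_2,\gamma_3)$, related by the composite $\phi_{12}\phi_{34}$, carries both a transposed $\{1,2\}$ and a transposed $\{3,4\}$ and realises the maximum $5$.

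For the lower bound I would pass from the rays based at the identity to the bi-infinite translation axes, which removes the degenerate coincidence on the common initial segment $a1ab$, so the fundamental domain contains no shared vertex. The key observation is that the total label $\vec\rho$ controls short distances: if $d(P,Q)=1$ then $\vec\rho(P)$ and $\vec\rho(Q)$ differ by exactly one standard basis vector $e_s$, and since $\vec\rho(\gamma_i(k))$ has coordinate sum $|k|$, any $Q\in\gamma_j$ with $d(P,Q)\le 1$ must satisfy $|m-k|\le 1$. This reduces the dangerous universal quantifier over all of $\gamma_j$ to a bounded window, and within one period I would enumerate the finitely many label differences $\vec\rho(\gamma_i(k))-\vec\rho(\gamma_j(m))$ for $|k-m|\le 1$ and check none equals $\pm e_s$; combined with $P\neq Q$ this yields $d(P,Q)\ge 2$. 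Combinatorially, this reflects that any geodesic between $\gamma_i$ and $\gamma_j$ must resolve a swapped commuting pair, contributing the length-$2$ element $34$ or $12$, which no single generator can produce.

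The main obstacle is the upper bound, specifically establishing fact (ii): that the three axes are genuinely mutually parallel, so that a bounded-window matching exists in the first place. Once parallelism confines the nearest point of $\gamma_j$ to a bounded window around each $P$, the arc-length property (distance equals word length, so no returns) makes the search finite, and both exact constants follow from a finite computation; the label argument then disposes of the lower bound cleanly, while pinning down the precise maximum $5$ rests on the slightly tedious enumeration over the double-swap pair $(\gamma_2,\gamma_3)$, which I would shorten using the symmetry $\phi_{12}\phi_{34}$.
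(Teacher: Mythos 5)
Your overall architecture (the diagram automorphisms $3\leftrightarrow 4$ and $1\leftrightarrow 2$, periodicity reducing everything to one fundamental domain, and an explicit shifted matching for the upper bound) is sound and in fact close to the paper's own proof: the paper rewrites $w^2$ by six explicit relations into $1a23b3\cdot w_2\cdot 1a24b4$, so that a translate of the $w_2$-axis runs parallel to the $w$-axis, and then checks both bounds on one period by inspecting the hexagonal cells between the two axes (Figure~\ref{fig:bundle}); your sample computation $\gamma_1(6)^{-1}\gamma_2(4)=b3$ is correct and is the same kind of matching. The genuine gap is in your lower bound. The label $\vec\rho$ is an attribute of a reduced \emph{word}, not of a group element: braid moves change it (already $aba=bab$ has labels $(2,1)$ and $(1,2)$), and in $\tilde D_6$ the diagram is connected and simply laced, so all generators are conjugate and no coordinate of the label is an invariant of the element, not even modulo $2$ --- only the total length is. Consequently, from $d(P,Q)=1$, i.e.\ $Q=Ps$, you may conclude only that $Q$ admits \emph{some} reduced word with label $\vec\rho_i(P)+e_s$; you cannot conclude that the label of the prefix of $\gamma_j$ ending at $Q$ differs from $\vec\rho_i(P)$ by a standard basis vector. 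So enumerating prefix-label differences in a bounded window and checking that none equals $\pm e_s$ does not exclude $d(P,Q)=1$, and the claimed reduction of the universal quantifier to a bounded window collapses with it.

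There is also a positioning error that interacts with this. The bi-infinite axes through the identity labelled by $w$ and by $w_2$ still share the vertices $e,\,a,\,a1,\,a1a,\,a1ab$ (and $b$ in the negative direction), so passing from rays to bi-infinite words does \emph{not} remove the coincidence; for those representatives the assertion ``distance at least $2$'' is simply false. The theorem concerns correctly translated representatives, and it is exactly the identity $w^2=1a23b3\cdot w_2\cdot 1a24b4$ that specifies them: the relevant $w_2$-axis is the one through $1a23b3$, not through $e$. With that positioning, your finite-window strategy for the lower bound is the right idea, but it must be carried out as a direct distance computation (or an explicit inspection of the cells between the axes, as the paper does) in one fundamental domain, rather than via label differences.
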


\begin{center}
\begin{figure}
\labellist
\pinlabel \tiny{$a$} at 30 160
\pinlabel \tiny{$1$} at 55 175
\pinlabel \tiny{$a$} at 87 165
\pinlabel \tiny{$b$} at 99 165
\pinlabel \tiny{$3$} at 130 175
\pinlabel \tiny{$b$} at 152 165
\pinlabel \tiny{$a$} at 165 165
\pinlabel \tiny{$2$} at 190 175
\pinlabel \tiny{$a$} at 218 165
\pinlabel \tiny{$b$} at 228 165
\pinlabel \tiny{$4$} at 260 175
\pinlabel \tiny{$b$} at 282 165
\pinlabel \tiny{$a$} at 295 165
\pinlabel \tiny{$1$} at 320 175
\pinlabel \tiny{$a$} at 346 165
\pinlabel \tiny{$b$} at 358 162
\pinlabel \tiny{$3$} at 380 175
\pinlabel \tiny{$b$} at 412 165
\pinlabel \tiny{$a$} at 422 165
\pinlabel \tiny{$2$} at 450 175
\pinlabel \tiny{$a$} at 475 165
\pinlabel \tiny{$b$} at 490 165
\pinlabel \tiny{$4$} at 510 175
\pinlabel \tiny{$b$} at 540 165
\pinlabel \tiny{$a$} at 553 160
\pinlabel \tiny{$1$} at 580 175
\pinlabel \tiny{$a$} at 605 165
\pinlabel \tiny{$b$} at 618 165
\pinlabel \tiny{$3$} at 640 175
\pinlabel \tiny{$b$} at 670 165
\pinlabel \tiny{$a$} at 682 165
\pinlabel \tiny{$2$} at 705 175
\pinlabel \tiny{$a$} at 735 165
\pinlabel \tiny{$b$} at 748 165
\pinlabel \tiny{$4$} at 775 175
\pinlabel \tiny{$b$} at 803 165

\pinlabel \tiny{$a$} at 30 15
\pinlabel \tiny{$1$} at 55 -5
\pinlabel \tiny{$a$} at 87 10
\pinlabel \tiny{$b$} at 100 10
\pinlabel \tiny{$4$} at 130 -5
\pinlabel \tiny{$b$} at 152 10
\pinlabel \tiny{$a$} at 163 12
\pinlabel \tiny{$2$} at 190 -5
\pinlabel \tiny{$a$} at 215 10
\pinlabel \tiny{$b$} at 226 10
\pinlabel \tiny{$3$} at 260 -5
\pinlabel \tiny{$b$} at 280 10
\pinlabel \tiny{$a$} at 292 10
\pinlabel \tiny{$1$} at 320 -5
\pinlabel \tiny{$a$} at 345 10
\pinlabel \tiny{$b$} at 358 10
\pinlabel \tiny{$4$} at 380 -5
\pinlabel \tiny{$b$} at 411 10
\pinlabel \tiny{$a$} at 420 10
\pinlabel \tiny{$2$} at 450 -5
\pinlabel \tiny{$a$} at 477 10
\pinlabel \tiny{$b$} at 488 10
\pinlabel \tiny{$3$} at 510 -5
\pinlabel \tiny{$b$} at 542 10
\pinlabel \tiny{$a$} at 553 10
\pinlabel \tiny{$1$} at 580 -5
\pinlabel \tiny{$a$} at 607 10
\pinlabel \tiny{$b$} at 618 10
\pinlabel \tiny{$4$} at 640 -5
\pinlabel \tiny{$b$} at 671 10
\pinlabel \tiny{$a$} at 682 10
\pinlabel \tiny{$2$} at 705 -5
\pinlabel \tiny{$a$} at 736 10
\pinlabel \tiny{$b$} at 749 10
\pinlabel \tiny{$3$} at 775 -5
\pinlabel \tiny{$b$} at 802 10

\pinlabel \tiny{$1$} at 30 130
\pinlabel \tiny{$a$} at 55 122
\pinlabel \tiny{$1$} at 85 130
\pinlabel \tiny{$4$} at 100 130
\pinlabel \tiny{$b$} at 130 122
\pinlabel \tiny{$4$} at 1300 130
\pinlabel \tiny{$2$} at 165 130
\pinlabel \tiny{$a$} at 190 122
\pinlabel \tiny{$2$} at 2130 130
\pinlabel \tiny{$3$} at 226 130
\pinlabel \tiny{$b$} at 260 122
\pinlabel \tiny{$3$} at 280 130
\pinlabel \tiny{$1$} at 295 130
\pinlabel \tiny{$a$} at 320 122
\pinlabel \tiny{$1$} at 345 130
\pinlabel \tiny{$4$} at 358 130
\pinlabel \tiny{$b$} at 380 122
\pinlabel \tiny{$4$} at 410 130
\pinlabel \tiny{$2$} at 420 130
\pinlabel \tiny{$a$} at 450 122
\pinlabel \tiny{$2$} at 475 130
\pinlabel \tiny{$3$} at 490 130
\pinlabel \tiny{$b$} at 510 122
\pinlabel \tiny{$3$} at 540 130
\pinlabel \tiny{$1$} at 555 130
\pinlabel \tiny{$a$} at 580 122
\pinlabel \tiny{$1$} at 605 130
\pinlabel \tiny{$4$} at 618 130
\pinlabel \tiny{$b$} at 640 122
\pinlabel \tiny{$4$} at 670 130
\pinlabel \tiny{$2$} at 682 130
\pinlabel \tiny{$a$} at 705 122
\pinlabel \tiny{$2$} at 735 130
\pinlabel \tiny{$3$} at 750 130
\pinlabel \tiny{$b$} at 775 122
\pinlabel \tiny{$3$} at 800 130

\pinlabel \tiny{$3$} at 30 70
\pinlabel \tiny{$a$} at 55 52
\pinlabel \tiny{$4$} at 85 70	
\pinlabel \tiny{$2$} at 100 70
\pinlabel \tiny{$b$} at 130 52
\pinlabel \tiny{$1$} at 150 70
\pinlabel \tiny{$3$} at 165 70
\pinlabel \tiny{$a$} at 190 52
\pinlabel \tiny{$4$} at 213 70
\pinlabel \tiny{$2$} at 226 70
\pinlabel \tiny{$b$} at 260 52
\pinlabel \tiny{$1$} at 280 70
\pinlabel \tiny{$3$} at 295 70
\pinlabel \tiny{$a$} at 320 52
\pinlabel \tiny{$4$} at 345 70
\pinlabel \tiny{$2$} at 358 70
\pinlabel \tiny{$b$} at 380 52
\pinlabel \tiny{$1$} at 410 70
\pinlabel \tiny{$3$} at 420 70
\pinlabel \tiny{$a$} at 450 52
\pinlabel \tiny{$4$} at 475 70
\pinlabel \tiny{$2$} at 490 70
\pinlabel \tiny{$b$} at 510 52
\pinlabel \tiny{$1$} at 540 70
\pinlabel \tiny{$3$} at 555 70
\pinlabel \tiny{$a$} at 580 52
\pinlabel \tiny{$4$} at 605 70
\pinlabel \tiny{$2$} at 618 70
\pinlabel \tiny{$b$} at 640 52
\pinlabel \tiny{$1$} at 670 70
\pinlabel \tiny{$3$} at 682 70
\pinlabel \tiny{$a$} at 705 52
\pinlabel \tiny{$4$} at 735 70
\pinlabel \tiny{$2$} at 750 70
\pinlabel \tiny{$b$} at 775 52
\pinlabel \tiny{$1$} at 800 70

\pinlabel \tiny{$3$} at 30 110
\pinlabel \tiny{$4$} at 85 110	
\pinlabel \tiny{$1$} at 100 110
\pinlabel \tiny{$2$} at 150 110
\pinlabel \tiny{$3$} at 165 110

\pinlabel \tiny{$4$} at 213 110
\pinlabel \tiny{$2$} at 226 110

\pinlabel \tiny{$1$} at 280 110
\pinlabel \tiny{$3$} at 295 110

\pinlabel \tiny{$4$} at 345 110
\pinlabel \tiny{$2$} at 358 110

\pinlabel \tiny{$1$} at 410 110
\pinlabel \tiny{$3$} at 420 110

\pinlabel \tiny{$4$} at 475 110
\pinlabel \tiny{$2$} at 490 110

\pinlabel \tiny{$1$} at 540 110
\pinlabel \tiny{$3$} at 555 110

\pinlabel \tiny{$4$} at 605 110
\pinlabel \tiny{$2$} at 618 110

\pinlabel \tiny{$1$} at 670 110
\pinlabel \tiny{$3$} at 682 110

\pinlabel \tiny{$4$} at 735 110
\pinlabel \tiny{$2$} at 750 110

\pinlabel \tiny{$1$} at 800 110

\endlabellist
 \includegraphics[scale=0.4]{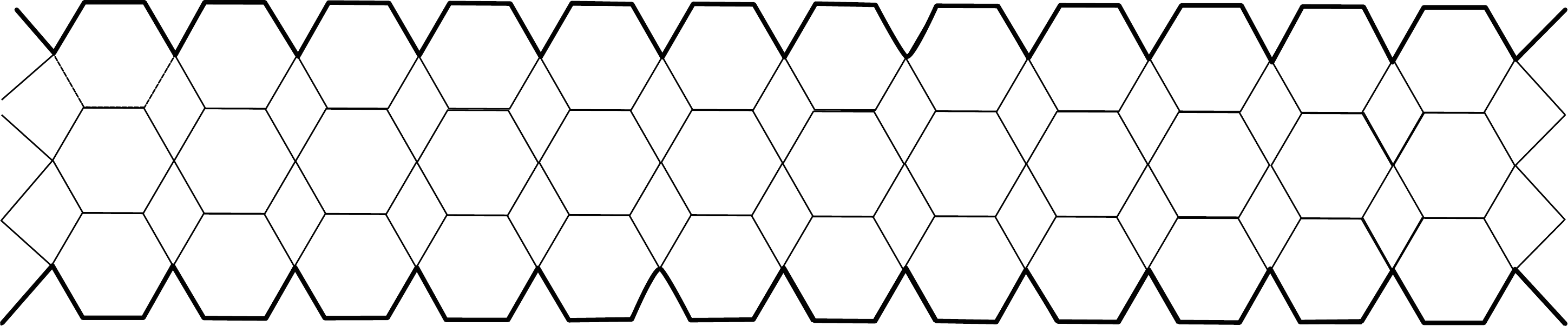}
 \caption{\tiny{The geodesic $w^n$ on the top, the geodesic $w_2^n$ on the bottom.}} \label{fig:bundle}
\end{figure}

\end{center}

\begin{proof} We can reduce $w^2 =  a1ab3ba2ab4b \cdot a1ab3ba2ab4b$ as follows:
{\small \begin{align*}  
w^2= \; & 1a1 \cdot 3b3 \cdot 2a2 \cdot 4b4 \cdot 1a1 \cdot 3b3 \cdot 2a2 \cdot 4b4 \\
= \; & 1a 3 1b2 3 a 4 2 b1 4a3 1b2 3a 42b4 =  1a 3 2b1 3a4 1b2 3a4  2b1 3a4 2b4 \\ 
= \; & 1a32b 31 a 14 b42a23 b31a24b4 = 1a23b3 a1ab4ba2ab3b1a24b4. \end{align*}}
We note that a cyclic permutation of the last line gives $w_2^2$ and we applied $6$ relations to get to the last line.  However, as it can be verified in Figure~\ref{fig:bundle}, the shortest distance actually remains uniformly bounded above by $5$.

We note that this method is independent of how many copies of $w$ we had in the beginning. In a similar fashion it can be seen that we can obtain $w_3=a2ab3ba1ab4b$. Hence we have a bundle of three bi-infinite geodesics which remain at distance at  most $5$ from each other.
\end{proof}

\section{Appendix}

\subsection{Type $A$ examples}\label{app:Aex}
We list the u.l.g.'s with different labels for the groups of type $A_n$ for $n=3,4,5$ by enumerating all total labels which give a u.l.g. including the word of length $0$, which gives the constant term in the generating function. It can be verified that these numbers correspond to the formula in Corollary~\ref{cor_nonzero_coeff}.

\begin{tabular}{|l|l|l|}
\hline
Type & Labels of u.l.g.'s & Total \\
& & number \\
 \hline
$A_3$ & \small{100 110 120 122
010 011 012  221
001 111 021  131
121}  & 15 \\
& \small{210 000}  & \\ 
& & \\
\hline
$A_4$ & Type I: \small{1000  0100  0010  0001 
1100  0110  0011  0111 
1110}    & 33 \\
& \small{1111 0000} & \\ 
 & & \\
&  Type II and III: \small{1200  0120  0012  1210 
0121  1220  0122}  
   & \\
 & \small{0021  
   2210 
0221  2221  1321  1231 
1331  1310  0131  1211
1121}   & \\
& \small{1222 1221 0210 2100} & \\ 
 & & \\
 \hline
$A_5$  & Type I: \small{10000  01000  00100  00010  00001 
11000  01100  00110}    & 66 \\
&   \small{00011 11100 
01110  00111  11110  01111  11111 00000} & \\
 & & \\
& Type II:   \small{12000  01200  00120  00012  21000
02100  00210  00021}   & \\
& \small{ 01220 
00122  22100  02210  00221  12220 
01222  22210  02221}   & \\
&  \small{22221 
12100  01210  00121  12110  01211 
11210  01121  12111}   & \\
&  \small{12210 
01221  12211  11221  12221 12200 12222 11121} & \\
& & \\
 & Type III: \small{12121  12321  12310  13210  13100 
01310  00131}     & \\  
& \small{01231 12331 
13321  12231  13221  13310  01331 
13331 01321} & \\
\hline
\end{tabular}

\subsection{Infinite u.l.g.'s in the group $\tilde{D}_6$}\label{app:list}
We list all cases that have to be considered in Theorem~\ref{thm:ulg}. Figure~\ref{fig:tree1} depicts the first part of these words, and the corresponding end-vertices of the trees labelled with $T2$, $T3$ and $T4$ indicate that the tree continues with Figure~\ref{fig:tree2},~\ref{fig:tree3} or \ref{fig:tree4}.
{\small
\begin{multicols}{2}
\begin{enumerate}
 \item \label{cases:1} $a1a2a1a$
 \item \label{cases:2} $a1a2a2$
 \item \label{cases:3} $a1a2aba2a$
 \item \label{cases:4} $a1a2abab$
 \item \label{cases:5} $a1a2ab4b$
  \item \label{cases:6} $a1a2ab3b3b$
 \item \label{cases:7} $a1a2ab3b4ba2a$
 \item \label{cases:8} $a1a2ab3b4b3b$
 \item \label{cases:9} $a1a2ab3bab$
 \item \label{cases:10} $a1a2ab3ba2a$
 \item \label{cases:11} $a1aba2ab$
 \item \label{cases:12} $a1aba2a2$
 \item \label{cases:15} $a1ab3bab$
 \item \label{cases:16} $a1ab3b4b$ 
 \item \label{cases:17} $a1ab4bab$
 \item \label{cases:18} $a1ab4ba2ab4b$
 \item \label{cases:19} $a1ab4ba2ab3ba1$ 
 \item \label{cases:20} $a1ab4b3b4b$
 \item \label{cases:21} $a1ab4b3ba2aba$
 \item \label{cases:22} $a1ab4b3ba2ab3b$
 \item \label{cases:23} $a1ab4b3ba2ab4b3b3$
 \item \label{cases:24} $a1ab4b3ba2ab4ba2a$
 \item \label{cases:25} $a1ab4b3ba2ab4ba1a$ 
 \item \label{cases:26} $a1ab4b3baba$
 \item \label{cases:27} $a1ab4b3bab3b$
 \item \label{cases:28} $a1ab4b3bab4ba$
 \item \label{cases:29} $a1ab4b3bab4b3b$
 \item \label{cases:30} $a1ab4ba2ab3b4bab3b$ 
 \item \label{cases:31} $a1ab4ba2ab3b4bab4b$
 \item \label{cases:32} $a1ab4ba2ab3b4baba$
 \item \label{cases:33} $a1ab4ba2ab3b4ba1a$
 \item \label{cases:34} $a1ab4ba2ab3b4ba2a$
 \item \label{cases:35} $a1ab4ba2ab3b4ba1ab3b$
 \item \label{cases:36} $a1ab4ba2ab3b4ba1ab4b$
 \item \label{cases:37} $a1ab4ba2ab3b4ba1aba$
 \item \label{cases:38} $a1ab4ba2ab3b4ba1a2ab3b$
 \item \label{cases:39} $a1ab4ba2ab3b4ba1a2ab4b$
 \item \label{cases:40} $a1ab4ba2ab3b4ba1a1$
 \item \label{cases:41} $a1ab4ba2ab3b4b3b$
 \item \label{cases:42} $a1ab4ba2ab3b4b4$
 \item \label{cases:43} $a1ab4b4$
 \item \label{cases:44} $a1ab4b3b3$
 \item \label{cases:45} $a1ab4b3ba2a2$
 \item \label{cases:46} $a1ab4b3bab4b4$
 \item \label{cases:47} $a1ab4ba2a2$
 \item \label{cases:48} $a1ab4ba2ab3b3$
 \item \label{cases:49} $a1ab4b3ba2ab4b4$
 \item \label{cases:50} $a1ab4b3ba2ab4bab$
 \item \label{cases:51} $a1ab4ba2ab3b4ba1a2abab$
 \item \label{cases:52} $a1ab4ba2ab3b4ba1a2aba2a$
 \item \label{cases:53} $a1ab4ba2ab3b4ba1a2aba1ab$
 \item \label{cases:54} $a1ab4ba2ab3b4ba1a2aba1a2a$
 \item \label{cases:55} $a1ab4ba2ab3b4ba1a2aba1a1a$
  \item \label{cases:56} $a1ab4b3ba2ab4b3b4b$
 \item \label{cases:57} $a1ab4b3ba2ab4b3ba2a$
 \item \label{cases:58} $a1ab4b3ba2ab4b3ba1aba$
 \item \label{cases:59} $a1ab4b3ba2ab4b3ba1a1a$
 \item \label{cases:60} $a1ab4b3ba2ab4b3ba1a2ab3b$
 \item \label{cases:61} $a1ab4b3ba2ab4b3ba1a2ab4b$
 \item \label{cases:62} $a1ab4b3ba2ab4b3ba1a2aba1a$
 \item \label{cases:63} $a1ab4b3ba2ab4b3ba1a2aba2a$
 \item \label{cases:64} $a1ab4b3ba2ab4b3ba1a2abab$
 \item \label{cases:65} $a1ab4b3ba2ab4b3baba$
 \item \label{cases:66} $a1ab4b3ba2ab4b3bab4b$
 \item \label{cases:67} $a1ab4b3ba2ab4b3bab3b$
 \item \label{cases:68} $a1ab4ba2aba$
 \item \label{cases:69} $a1ab4b3ba2ab4b3ba1ab3b$
 \item \label{cases:70} $a1ab4b3ba2ab4b3ba1ab4b$
\end{enumerate}
\end{multicols}}
Checking the cases of the tree: 

\begin{itemize}
 \item

The following cases have been shown in the proof of Theorem~\ref{thm:ulg}:
\[\small{\{\eqref{cases:5},  \eqref{cases:19}, \eqref{cases:30}, \eqref{cases:33}, \eqref{cases:35}, \eqref{cases:38}, \eqref{cases:54}, \eqref{cases:61}, \eqref{cases:62}, \eqref{cases:64}, \eqref{cases:70}\}. }\]

\item The following cases contain a sequence $xyxy$ and are hence not reduced: 
\[{\small\{\ref{cases:2}, \ref{cases:4}, \ref{cases:6}, \ref{cases:12}, \ref{cases:23}, \ref{cases:26}, \ref{cases:32}, \ref{cases:40}, \ref{cases:42},  \ref{cases:43}, \ref{cases:44}, \ref{cases:45}, \ref{cases:46}, \ref{cases:47}, \ref{cases:48}, \ref{cases:49}, \ref{cases:51}, \ref{cases:55}, \ref{cases:64},\ref{cases:65}\}.}\]

\item The following cases contain $xyx \dots xyx$ where the dots do not contain $x$ or $y$. These cases are not reduced:
\[{\small \{\ref{cases:1}, \ref{cases:3}, \ref{cases:7},\ref{cases:8},\ref{cases:10},\ref{cases:18},\ref{cases:20},\ref{cases:22},\ref{cases:24},\ref{cases:27},\ref{cases:31},\ref{cases:34},\ref{cases:36}, \ref{cases:41}, \ref{cases:52}, \ref{cases:56}, \ref{cases:57}, \ref{cases:59}, \ref{cases:60}, \ref{cases:63}, \ref{cases:67}, \ref{cases:69} \}.}\]

\item The following cases contain $xy \dots yxy$ and are hence not a u.l.g.:
\[{\small \{\ref{cases:9},\ref{cases:11},\ref{cases:15},\ref{cases:17},\ref{cases:21},\ref{cases:28},\ref{cases:37},\ref{cases:39}, \ref{cases:50}, \ref{cases:53}, \ref{cases:58}, \ref{cases:68} \}.}\]
\end{itemize}

\newpage

{\tiny

 \begin{figure}[H]
  \labellist
  \pinlabel $1$ at 150 280
  \pinlabel $a$ at 150 260
  \pinlabel $b$ at 140 250
  \pinlabel $3$ at 120 240
  \pinlabel $b$ at 80 210
  \pinlabel $4$ at 50 200
  \pinlabel $b$ at 20 180
  \pinlabel $a$ at 60 180
  \pinlabel $b$ at 55 170
  \pinlabel $4$ at 120 220
  \pinlabel $a$ at 145 220
  \pinlabel $2$ at 135 180
  \pinlabel $a$ at 120 160
  \pinlabel $b$ at 90 130
  \pinlabel $2$ at 120 130
  \pinlabel $2$ at 180 240
  \pinlabel $a$ at 210 200
  \pinlabel $b$ at 250 190
  \pinlabel $1$ at 215 180
  \pinlabel $a$ at 205 160
  \pinlabel $2$ at 240 170
  \pinlabel $3$ at 300 170
  \pinlabel $a$ at 260 150
  \pinlabel $b$ at 250 120
  \pinlabel $2$ at 290 110
  \pinlabel $a$ at 280 80
  \pinlabel $4$ at 300 150
  \pinlabel $b$ at 320 110
  \pinlabel $b$ at 360 150
  \pinlabel $3$ at 350 120
  \pinlabel $4$ at 370 120
  \pinlabel $b$ at 370 90
  \pinlabel $a$ at 360 60
  \pinlabel $2$ at 350 40
  \pinlabel $a$ at 330 20
  \pinlabel $b$ at 380 60
  \pinlabel $3$ at 405 40
  \pinlabel $b$ at 395 10
  \pinlabel $a$ at 380 130
  \pinlabel $2$ at 420 110
  \pinlabel $b$ at 400 90
  \pinlabel $a$ at 440 80
  
  \pinlabel \circled{\ref{cases:16}} at 0 157
  \pinlabel \circled{\ref{cases:15}} at 40 150
  \pinlabel \circled{T2} at 103 200
  \pinlabel \circled{\ref{cases:11}} at 70 110
  \pinlabel \circled{\ref{cases:12}} at 120 110 
  \pinlabel \circled{\ref{cases:1}} at 200 135
  \pinlabel \circled{\ref{cases:2}} at 235 152
  \pinlabel \circled{\ref{cases:4}} at 260 80
  \pinlabel \circled{\ref{cases:3}} at 290 47
  \pinlabel \circled{\ref{cases:5}} at 310 80
  \pinlabel \circled{\ref{cases:6}} at 350 100
  \pinlabel \circled{\ref{cases:7}} at 320 0
  \pinlabel \circled{\ref{cases:8}} at 412 0
  \pinlabel \circled{\ref{cases:9}} at 409 75
  \pinlabel \circled{\ref{cases:10}} at 440 45
  
  \endlabellist
  
  \includegraphics[scale=0.7]{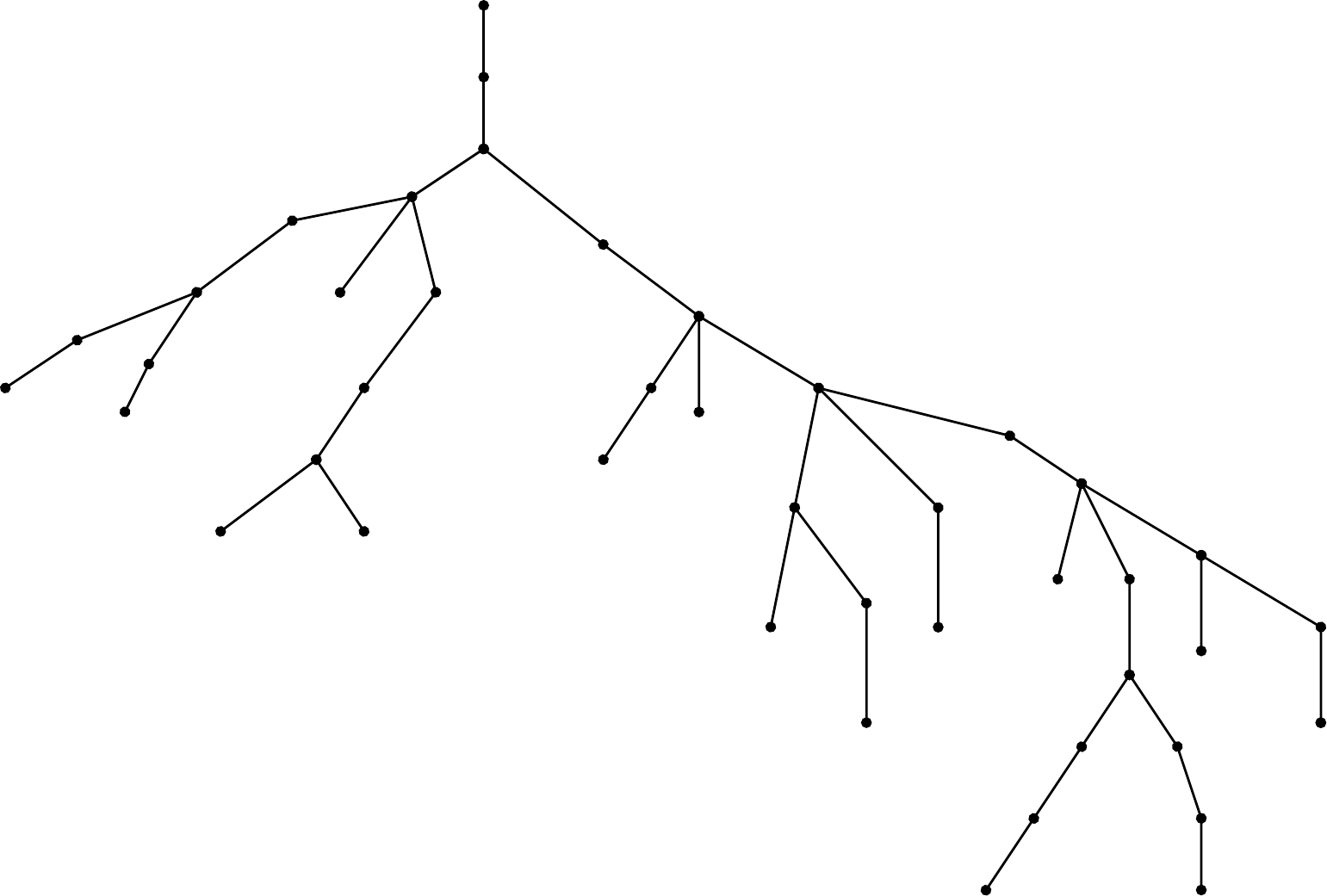}\caption{Tree 1}\label{fig:tree1}
 \end{figure}
 
 \begin{figure}[H]
  \labellist
  
  \pinlabel $a$ at 150 370
  \pinlabel $4$ at 200 380
  \pinlabel $b$ at 90 330
  \pinlabel $2$ at 120 330
  \pinlabel $b$ at 180 330
  \pinlabel $a$ at 90 290
  \pinlabel $b$ at 60 250
  \pinlabel $2$ at 90 250
  \pinlabel $a$ at 20 230
  \pinlabel $4$ at 30 210
  \pinlabel $3$ at 60 210
  \pinlabel $b$ at 50 180
  \pinlabel $a$ at 40 150
  \pinlabel $4$ at 70 140
  \pinlabel $1$ at 20 120
  \pinlabel $a$ at 0 70
  \pinlabel $b$ at 70 100
  \pinlabel $3$ at 40 60
  \pinlabel $a$ at 50 50
  \pinlabel $4$ at 80 50
  \pinlabel $3$ at 180 370
  \pinlabel $a$ at 230 290
  \pinlabel $3$ at 180 280
  \pinlabel $4$ at 210 280
  \pinlabel $b$ at 200 250
  \pinlabel $b$ at 280 270
  \pinlabel $2$ at 260 250
  \pinlabel $a$ at 260 220
  \pinlabel $b$ at 260 180
  \pinlabel $4$ at 220 150
  \pinlabel $b$ at 180 120
  \pinlabel $4$ at 150 80
  \pinlabel $3$ at 160 70
  \pinlabel $b$ at 160 40
  \pinlabel $a$ at 190 90
  \pinlabel $1$ at 220 70
  \pinlabel $a$ at 270 30
  \pinlabel $2$ at 210 50
  \pinlabel $a$ at 210 30
  \pinlabel $a$ at 230 130
  \pinlabel $3$ at 260 130
  \pinlabel $b$ at 290 100
  \pinlabel $a$ at 290 230
  \pinlabel $3$ at 320 230
  \pinlabel $4$ at 330 250
  \pinlabel $b$ at 340 200
  \pinlabel $b$ at 370 210
  \pinlabel $a$ at 370 180
  \pinlabel $3$ at 410 180
  \pinlabel $b$ at 440 165
  
  \pinlabel \circled{\ref{cases:68}} at 0 220
  \pinlabel \circled{\ref{cases:18}} at -5 180
  \pinlabel \circled{\ref{cases:17}} at 63 300
  \pinlabel \circled{\ref{cases:45}} at 100 208
  \pinlabel \circled{\ref{cases:19}} at -7 50
  \pinlabel \circled{\ref{cases:41}} at 20 30
  \pinlabel \circled{T3} at 63 23
  \pinlabel \circled{\ref{cases:42}} at 110 23
  \pinlabel \circled{\ref{cases:44}} at 170 260
  \pinlabel \circled{\ref{cases:20}} at 205 215
  \pinlabel \circled{\ref{cases:21}} at 235 105
  \pinlabel \circled{\ref{cases:49}} at 130 50
  \pinlabel \circled{T4} at 160 10
  \pinlabel \circled{\ref{cases:24}} at 200 0
  \pinlabel \circled{\ref{cases:25}} at 260 -8
  \pinlabel \circled{\ref{cases:22}} at 300 80
  \pinlabel \circled{\ref{cases:26}} at 305 190
  \pinlabel \circled{\ref{cases:27}} at 345 170
  \pinlabel \circled{\ref{cases:28}} at 380 155
  \pinlabel \circled{\ref{cases:29}} at 440 145
  \pinlabel \circled{\ref{cases:43}} at 260 367
  \endlabellist
  \includegraphics[scale=0.7]{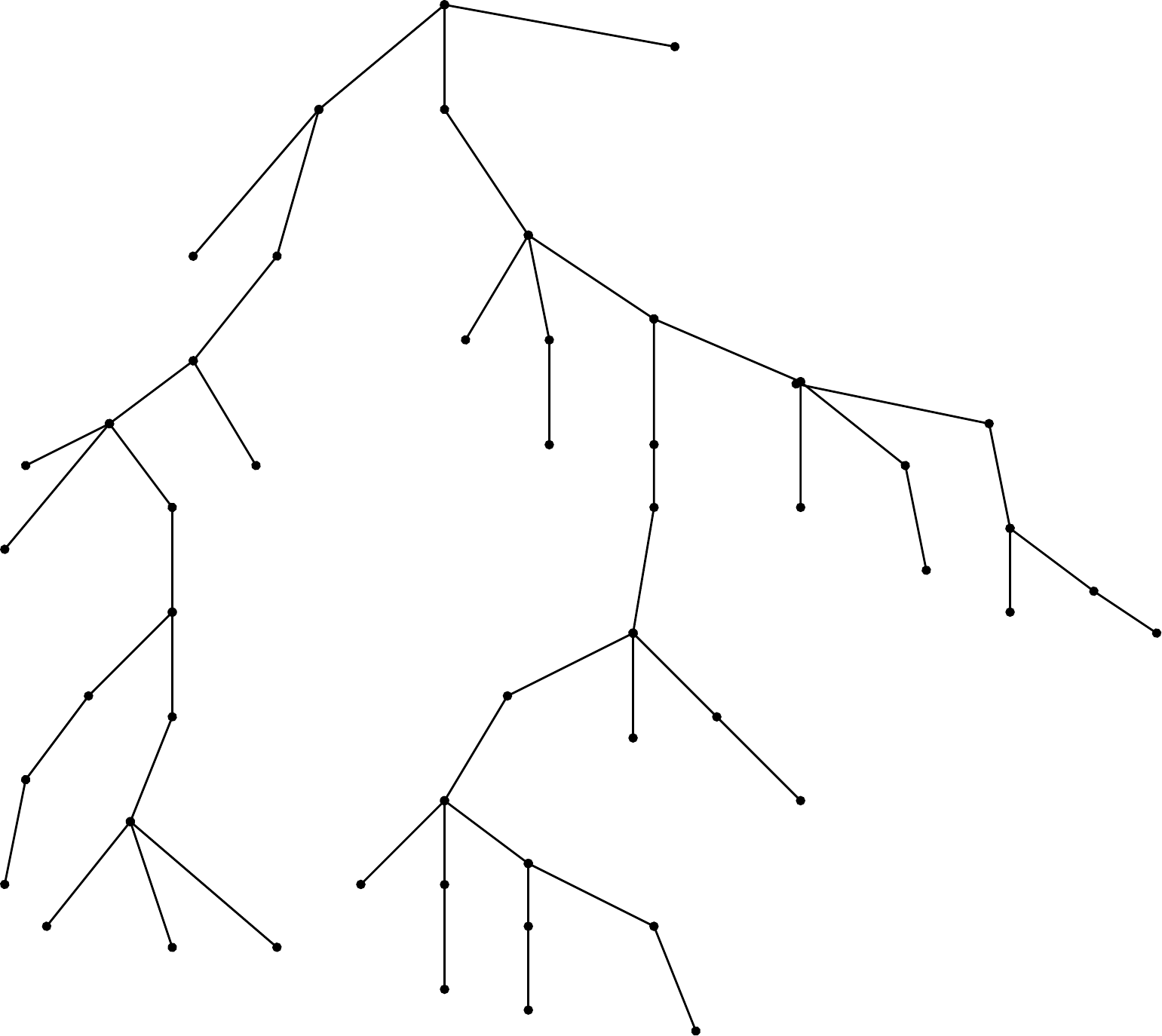}\caption{Tree 2}\label{fig:tree2}
 \end{figure}

 \begin{figure}[H]
  \labellist
  \pinlabel $b$ at 80 300 
  \pinlabel $4$ at 60 280
  \pinlabel $b$ at 50 270
  \pinlabel $3$ at 60 260
  \pinlabel $a$ at 85 260
  \pinlabel $b$ at 75 250
  \pinlabel $a$ at 60 230
  \pinlabel $1$ at 90 240
  \pinlabel $a$ at 85 220
  \pinlabel $1$ at 140 300
  \pinlabel $2$ at 100 270
  \pinlabel $a$ at 110 240
  \pinlabel $a$ at 200 260
  \pinlabel $2$ at 240 240
  \pinlabel $1$ at 180 230
  \pinlabel $b$ at 215 230
  \pinlabel $a$ at 180 210
  \pinlabel $3$ at 200 190
  \pinlabel $4$ at 220 190
  \pinlabel $b$ at 200 160
  \pinlabel $b$ at 235 160
  \pinlabel $a$ at 260 210
  \pinlabel $b$ at 300 190
  \pinlabel $a$ at 260 140
  \pinlabel $3$ at 300 150
  \pinlabel $4$ at 330 160
  \pinlabel $b$ at 280 120
  \pinlabel $b$ at 340 130
  \pinlabel $2$ at 190 100
  \pinlabel $a$ at 150 80
  \pinlabel $1$ at 220 100
  \pinlabel $b$ at 250 100
  \pinlabel $a$ at 210 70
  \pinlabel $b$ at 180 50
  \pinlabel $2$ at 200 40
  \pinlabel $a$ at 200 20
  \pinlabel $1$ at 230 50
  \pinlabel $a$ at 255 20
  
  \pinlabel \circled{\ref{cases:31}} at 0 225
  \pinlabel \circled{\ref{cases:30}} at 25 220
  \pinlabel \circled{\ref{cases:32}} at 50 195
  \pinlabel \circled{\ref{cases:33}} at 90 195
  \pinlabel \circled{\ref{cases:34}} at 110 210
  \pinlabel \circled{\ref{cases:40}} at 162 215
  \pinlabel \circled{\ref{cases:37}} at 145 190
  \pinlabel \circled{\ref{cases:35}} at 185 120
  \pinlabel \circled{\ref{cases:36}} at 225 135
  \pinlabel \circled{\ref{cases:52}} at 115 50
  \pinlabel \circled{\ref{cases:53}} at 170 23
  \pinlabel \circled{\ref{cases:54}} at 195 0
  \pinlabel \circled{\ref{cases:55}} at 268 0
  \pinlabel \circled{\ref{cases:51}} at 270 75
  \pinlabel \circled{\ref{cases:38}} at 280 105
  \pinlabel \circled{\ref{cases:39}} at 340 105
  \endlabellist
 
  \includegraphics[scale=0.9]{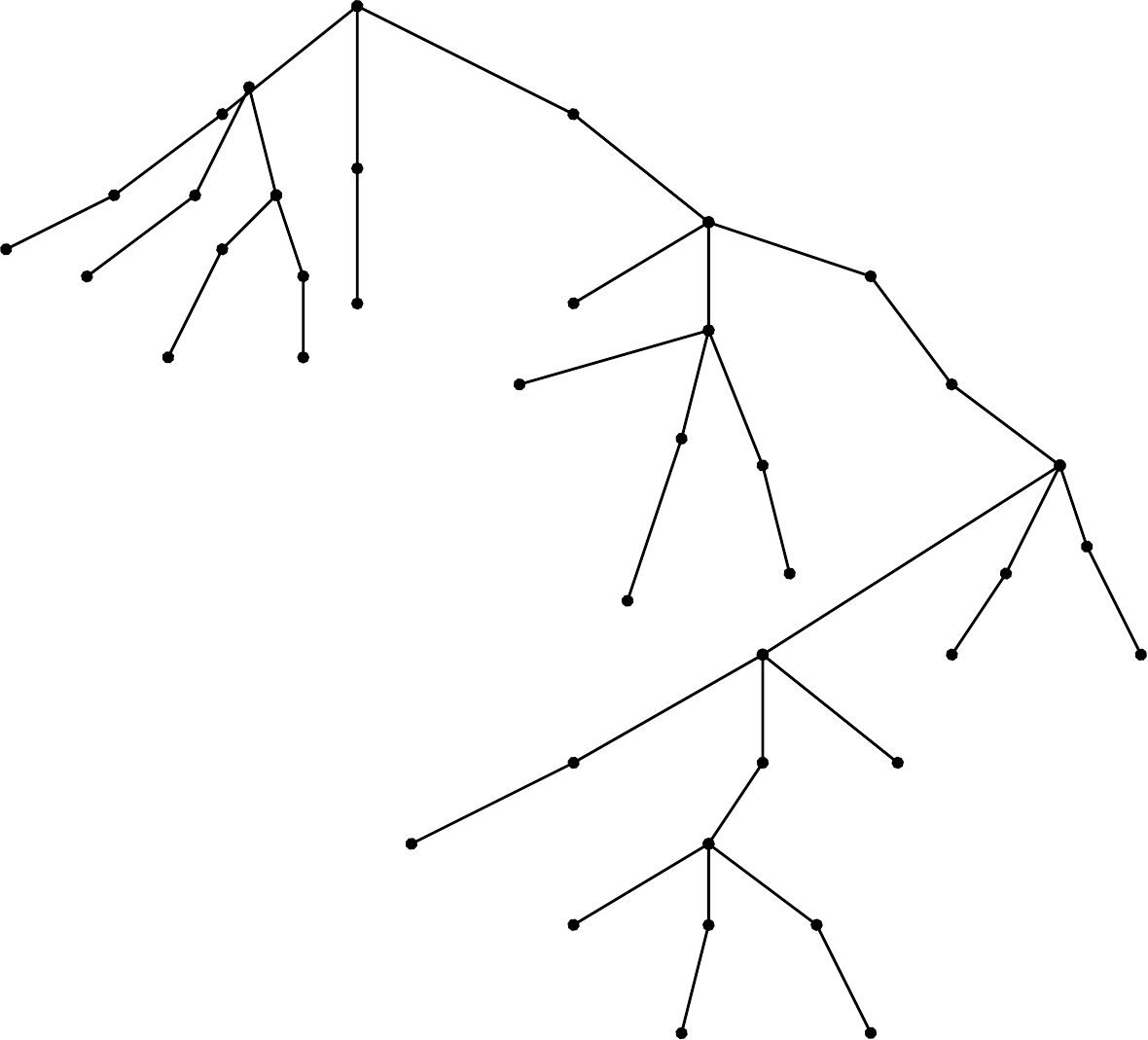}\caption{Tree 3}\label{fig:tree3}
 \end{figure}

 \begin{figure}[H]
  \labellist
  \pinlabel $3$ at 100 300
  \pinlabel $b$ at 50 270
  \pinlabel $b$ at 80 270
  \pinlabel $4$ at 140 300
  \pinlabel $a$ at 170 300
  \pinlabel $b$ at 140 240
  \pinlabel $3$ at 110 210
  \pinlabel $b$ at 80 170
  \pinlabel $4$ at 125 200
  \pinlabel $a$ at 150 200
  \pinlabel $2$ at 180 240
  \pinlabel $a$ at 175 200
  \pinlabel $1$ at 200 250
  \pinlabel $a$ at 240 200
  \pinlabel $b$ at 220 180
  \pinlabel $a$ at 190 150
  \pinlabel $3$ at 210 150
  \pinlabel $4$ at 230 150
  \pinlabel $b$ at 180 110
  \pinlabel $b$ at 220 110
  \pinlabel $1$ at 237 170
  \pinlabel $2$ at 260 170
  \pinlabel $a$ at 280 140
  \pinlabel $b$ at 300 100
  \pinlabel $a$ at 320 80
  \pinlabel $4$ at 290 80
  \pinlabel $3$ at 305 70
  \pinlabel $b$ at 270 60
  \pinlabel $b$ at 290 60
  \pinlabel $1$ at 320 50
  \pinlabel $a$ at 320 20
  \pinlabel $b$ at 350 60
  \pinlabel $2$ at 340 40
  \pinlabel $a$ at 340 10
  
  \pinlabel \circled{\ref{cases:23}} at 0 235
  \pinlabel \circled{\ref{cases:56}} at 40 235
  \pinlabel \circled{\ref{cases:67}} at 70 148
  \pinlabel \circled{\ref{cases:66}} at 122 145
  \pinlabel \circled{\ref{cases:65}} at 150 180
  \pinlabel \circled{\ref{cases:57}} at 175 180
  \pinlabel \circled{\ref{cases:58}} at 170 130
  \pinlabel \circled{\ref{cases:69}} at 170 90
  \pinlabel \circled{\ref{cases:70}} at 220 90
  \pinlabel \circled{\ref{cases:59}} at 245 145
  \pinlabel \circled{\ref{cases:61}} at 260 33
  \pinlabel \circled{\ref{cases:60}} at 290 30
  \pinlabel \circled{\ref{cases:62}} at 305 0
  \pinlabel \circled{\ref{cases:63}} at 355 0
  \pinlabel \circled{\ref{cases:64}} at 375 33
  \endlabellist
  \includegraphics[scale=0.9]{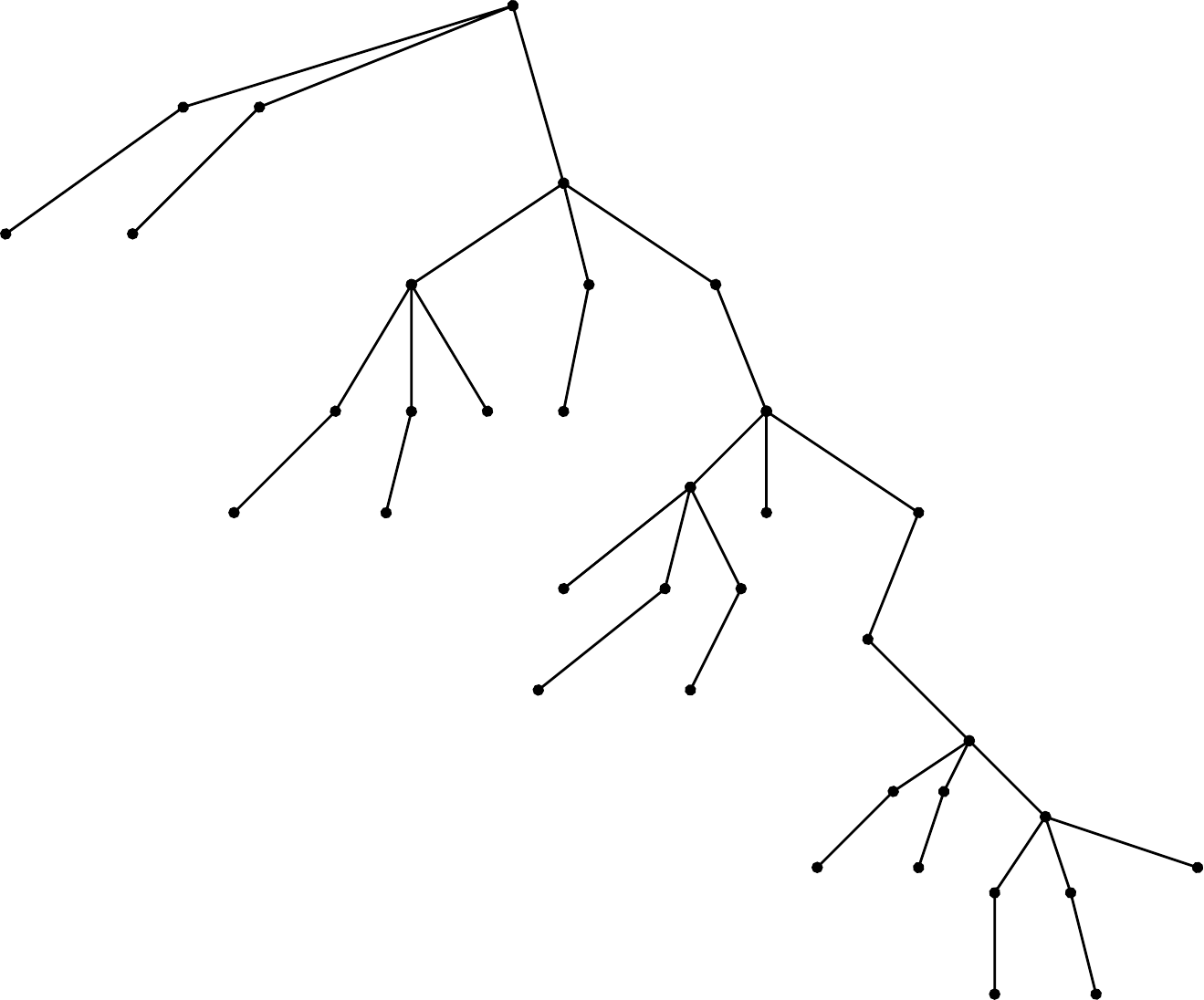}\caption{Tree 4}\label{fig:tree4}
  \end{figure} }

\end{document}